\documentclass[11pt,reqno]{amsart}


\usepackage{graphicx}         

\usepackage{comment}

\usepackage{amsmath, amsopn, amssymb}
\usepackage{a4wide}
\usepackage{xspace}
\usepackage{float}
\usepackage{natbib}
\usepackage{verbatim}
\usepackage[latin1]{inputenc}

\usepackage{color}
\usepackage{bbm}
\usepackage{multirow}

\usepackage{latexsym, amsbsy, epsfig, enumerate}
\usepackage{mathsym}




\newcommand{\PP}{\mathsf{P}}

\newcommand{\suman}{\ensuremath{\sum_{i=1}^{n}}}

\newcommand{\E}{\ensuremath{\mathsf{E}}}
\newcommand{\var}{\ensuremath{\mathsf{var}}}
\newcommand{\Es}{\ensuremath{\mathsf{E}\,}}



\newcommand{\ind}{\ensuremath{\mathbbm{1}}}

\renewcommand{\aa}{\ensuremath{\mathbf{a}}}
\renewcommand{\AA}{\ensuremath{\mathbf{A}}}  
\newcommand{\BB}{\ensuremath{\mathbf{B}}}

\newcommand{\ee}{\ensuremath{\mathbf{e}_{1}}}
\newcommand{\hh}{\ensuremath{\mathbf{h}}}
\newcommand{\JJ}{\ensuremath{\mathbf{J}}}
\newcommand{\ii}{\ensuremath{\mathbf{i}}}

\newcommand{\kk}{\ensuremath{\mathbf{k}}}

\renewcommand{\QQ}{\ensuremath{\mathbf{Q}}}
\renewcommand{\tt}{\ensuremath{\mathbf{t}}}

\newcommand{\XX}{\ensuremath{\boldsymbol{X}}}
\newcommand{\xx}{\ensuremath{\mathbf{x}}}  

\newcommand{\uu}{\ensuremath{\mathbf{u}}}
\newcommand{\vv}{\ensuremath{\mathbf{v}}}

\newcommand{\bbeta}{\ensuremath{\boldsymbol{\beta}}}
\newcommand{\bgamma}{\ensuremath{\boldsymbol{\gamma}}}
\newcommand{\bGamma}{\ensuremath{\boldsymbol{\Gamma}}}
\newcommand{\bdelta}{\ensuremath{\boldsymbol{\delta}}}
\newcommand{\beps}{\ensuremath{\boldsymbol{\eps}}}

\newcommand{\bphi}{\ensuremath{\boldsymbol{\phi}}}
\newcommand{\bpsi}{\ensuremath{\boldsymbol{\psi}}}

\newcommand{\bsigma}{\ensuremath{\boldsymbol{\sigma}}}
\newcommand{\bSigma}{\boldsymbol{\Sigma}}
\newcommand{\btheta}{\ensuremath{\boldsymbol{\theta}}}

\newcommand{\hatbtheta}{\ensuremath{\boldsymbol{\widehat{\theta}}}}

\newcommand{\setI}{\ensuremath{\mathbb{I}}}

\newcommand{\hatm}{\ensuremath{\widehat{m}}}
\newcommand{\hatsigma}{\ensuremath{\widehat{\sigma}}}
\newcommand{\hatF}{\ensuremath{\widehat{F}}}
\newcommand{\hatG}{\ensuremath{\widehat{G}}}
\newcommand{\hata}{\ensuremath{\widehat{a}}}
\newcommand{\hatb}{\ensuremath{\widehat{b}}}




\newcommand{\Uhat}{\ensuremath{\widehat{U}_{1i}}\xspace}
\newcommand{\Utilde}{\ensuremath{\widetilde{U}_{1i}}\xspace}
\newcommand{\Vhat}{\ensuremath{\widehat{U}_{2i}}\xspace}
\newcommand{\Vtilde}{\ensuremath{\widetilde{U}_{2i}}\xspace}
\newcommand{\hateps}{\ensuremath{\widehat{\eps}}\xspace}


\newcommand{\cond}{\ensuremath{\,|\,}}

\DeclareRobustCommand{\inDist}{\ensuremath{\xrightarrow[n\rightarrow\infty]{d}}}





%
%
%

%
\newtheorem{theorem}{Theorem}

\newtheorem{lemma}{Lemma}

\theoremstyle{remark}
\newtheorem{remark}{Remark}



\DeclareRobustCommand{\tr}{^\mathsf{T}}



\renewcommand{\baselinestretch}{1.24}

\makeatletter
\def\singlespace{\def\baselinestretch{1}\@normalsize}

\def\boxit#1{\vbox{\hrule\hbox{\vrule\kern6pt
          \vbox{\kern6pt#1\kern6pt}\kern6pt\vrule}\hrule}}


\begin{document}

\vspace*{-0.8 cm}

\noindent
\title[A copula approach for dependence modeling]
{A copula approach for dependence modeling in multivariate nonparametric time series}
\author{Natalie Neumeyer$^{\MakeLowercase{a}}$, Marek  Omelka$^{\MakeLowercase{b}}$, 
\v{S}\'{a}rka Hudecov\'{a}$^{\MakeLowercase{b}}$}

\maketitle

\begin{center}
$^a$ 
Department of Mathematics, University of Hamburg, Bundesstrasse 55, 20146 Hamburg, Germany \\
$^b$
Department of Probability and Statistics,
Faculty of Mathematics and Physics,  
Charles University, 
Sokolovsk\'a 83,
186\,75 Praha 8, Czech Republic
\\

\end{center}
\vspace*{0.6 cm}

\begin{center}
\today
\end{center}

\begin{abstract}
This paper is concerned with modeling the dependence structure of two (or more) time-series 
in the presence of a (possibly multivariate) covariate which may include 
past values of the time series. We assume that the covariate influences 
only the conditional mean and the conditional variance of each of the time series 
but the distribution of the standardized innovations is not influenced by 
the covariate and is stable in time. The joint distribution of the time series is then determined by the conditional means, the 
conditional variances and the marginal distributions of the innovations, which we estimate nonparametrically, and the copula of the innovations, which represents the dependency structure. We consider 
a nonparametric as well as a semiparametric estimator based 
on the estimated residuals. We show that under suitable assumptions 
these copula estimators are asymptotically equivalent to estimators that would be based 
on the unobserved innovations. The theoretical results are illustrated by simulations 
and a real data example. 
\end{abstract}
\vspace*{0.6 cm}

\noindent {\it Keywords and phrases}: Asymptotic representation; CHARN model;
 empirical co\-pula process; goodness-of-fit testing; nonparametric AR-ARCH model;  nonparametric SCOMDY model;  weak convergence.

%

\date{\today}

\maketitle

\boxit{\textcolor{red}{NOTICE: This is the version of the manuscript accepted to  
Journal of Multivariate Analysis.}
\textcolor{blue}{DOI: 10.1016/j.jmva.2018.11.016} 
}

\section{Introduction}

Modeling the dependency of $k$ observed time series can be of utmost importance for applications, e.\,g.\ in risk management (for instance to model the dependence between several exchange rates).
We will take the approach to model $k$ dependent nonparametric AR-ARCH time series
$$ Y_{ji} = m_{j}(\XX_{i}) + \sigma_{j}(\XX_{i})\,\eps_{ji},\; i=1,\dotsc,n,\,  j=1,\dotsc,k,
$$
where the covariate $\XX_{i}$ may include past values of the process, $Y_{j\,i-1},Y_{j\, i-2},\dots$ ($j=1,\dotsc,k$), or other exogenous variables. Further the  innovations $(\eps_{1i},\dotsc,\eps_{ki})$, $ i \in \mathbb{Z}$, 
are assumed to be independent and identically distributed random vectors and $(\eps_{1i},\dotsc,\eps_{ki})$ is independent 
of the past and present covariates $\XX_{\ell}$, $\ell \leq i$ for all  $i$. For identifiability we assume $\Es \eps_{ji} = 0$, $\var(\eps_{ji}) = 1$ ($j=1,\dotsc,k$), such that the functions $m_j$ and $\sigma_j$ represent the conditional mean and volatility function of the $j$th time series. 
Such models are also called multivariate nonparametric CHARN (conditional heteroscedastic autoregressive nonlinear) models and have gained much attention over the last decades, see \cite{fan_yao_book2005} and \cite{gao2007nonlinear}  
 for extensive overviews. 

Note that due to the structure of the model and Sklar's theorem \citep[see e.g.,][]{nelsen_2006}, for $z_j=(y_j-m_j(\xx))/\sigma_j(\xx)$ ($j=1,\dotsc,k$) one has
$$
 \PP(Y_{1i} \leq y_{1},\dotsc, Y_{ki} \leq y_{k} \cond \XX_i=\xx) = 
  \PP(\eps_{1i} \leq z_{1},\dotsc, \eps_{ki} \leq z_{k})  
=
  C(F_{1\eps}(z_1),\dotsc, F_{k\eps}(z_k)), 
$$
where $F_{j\eps}$ ($j=1,\dotsc,k$) denote the marginal distributions of the innovations and $C$ their copula. Thus the joint
 conditional distribution of the observations, given the covariate, is completely specified by the individual conditional mean and variance functions, the marginal distributions of the innovations, and their copula.
The copula $C$ describes the dependence structure of the $k$ time series, conditional on the covariates, after removing influences of the conditional means and variances as well as marginal distributions. 

We will model the  conditional mean and variance function nonparametrically like 
\cite{hardle1998nonparametric}, among others. 
Semiparametric estimation, e.\,g.\ with additive structure for $m_j$ 
and multiplicative structure for $\sigma_j^2$ as in \cite{yang1999nonparametric}, 
 can be considered as well and all presented results remain valid under appropriate changes for the estimators and assumptions. 
Further we will model the marginal distributions of the innovations nonparametrically, whereas we will take two different approaches to estimate the copula $C$: nonparametrically and parametrically. As the innovations are not observable, both estimators will be based on estimated residuals. We will show that the asymptotic distribution is not affected by the necessary pre-estimation of the mean and variance functions. This remarkable result is intrinsic for copula estimation and it was already observed in (semi)parametric 
estimation of copula (see the references in the next paragraph). It is worth 
noting that on the other hand the asymptotic distribution of empirical distribution functions is typically influenced by pre-estimation of mean and variance functions. 
 Moreover, comparison of the nonparametric and parametric copula estimator gives us the possibility to test goodness-of-fit of a parametric class of copulas. 

Our approach extends the following parametric and semiparametric approaches in time series contexts. \cite{chen2006estimation}
introduced SCOMDY (semiparametric copula-based multivariate dynamic) models, which are very similar to the model considered here. However, the conditional mean and variance functions are modeled parametrically, while the marginal distributions of innovations are estimated nonparametrically and a parametric copula model is applied to model the dependence. See also  \cite{kim2007semiparametric}
for similar methods for some parametric time series models including nonlinear GARCH models, \cite{remillard2012copula}, \cite{kim2008estimating} and the review by \cite{patton2012review}.
\cite{chan2009statistical} give (next to the parametric estimation of a copula) even a goodness-of fit test for the innovation copula in the GARCH context. Further, in an i.i.d.\ setting \cite{ogv_sjs_2015} 
show that in nonparametric location-scale models the asymptotic distribution of the empirical copula is not influenced by pre-estimation of the mean and variance function. 
This results was further generalized by \cite{portier2018weak} to a completely nonparametric model 
for the marginals.  

The remainder of the paper is organized as follows. In Section~\ref{section2} we define the estimators and state some regularity assumptions. In Subsection \ref{section2.1} we show the weak convergence of the copula process, while in Subsection \ref{section2.2} we show asymptotic normality of a parameter estimator when considering a parametric class of copulas. Subsection  \ref{section2.3} is devoted to goodness-of-fit testing. In Section \ref{sec: simul study} we present simulation results and 
in Section~\ref{sec: application} a real data example. All proofs are given in the Appendix. 


\section{Main results}\label{section2}

For the ease of presentation we will focus on the case of two time series, i.\,e.\ $k=2$, but all results can be extended to general $k\geq 2$ in an obvious manner.
Suppose we have observed for $i=1,\dotsc,n$ a section of the stationary stochastic process $\big\{Y_{1i}, Y_{2i}, \XX_{i}\big\}_{i \in \mathbb{Z}}$ 
that satisfies 
\begin{equation} \label{eq: nonpar location scale}
 Y_{1i} = m_{1}(\XX_{i}) + \sigma_{1}(\XX_{i})\,\eps_{1i}, 
 \quad 
 Y_{2i} = m_{2}(\XX_{i}) + \sigma_{2}(\XX_{i})\,\eps_{2i},  
 \quad 
 i=1,\dotsc,n, 
\end{equation}
where $\XX_{i}=(X_{i1},\dotsc,X_{id})\tr$ is a $d$-dimensional 
covariate and the  innovations $\big\{(\eps_{1i},\eps_{2i})\big\}_{i \in \mathbb{Z}}$ 
are independent identically distributed random vectors.  Further $(\eps_{1i},\eps_{2i})$ is independent 
of the past and present covariates $\XX_{k}$, $k \leq i,\, \forall i$, and $ \Es \eps_{1i} = \Es \eps_{2i} = 0$, $ \var(\eps_{1i}) = \var(\eps_{2i}) = 1$.
If the marginal distribution functions 
$F_{1\eps}$ and $F_{2\eps}$ of the innovations are continuous, then the copula 
function~$C$ of the innovations is unique and can be expressed as 
\begin{equation} \label{eq: explicit expression for copula}
 C(u_1,u_2) = F_{\eps}\big(F_{1\eps}^{-1}(u_1), F_{2\eps}^{-1}(u_2) \big), 
 \quad (u_1,u_2) \in [0,1]^{2}.
\end{equation}
As the innovations $(\eps_{1i},\eps_{2i})$ are unobserved, 
the inference about the copula function~$C$ is based 
on the estimated residuals 
\begin{equation} \label{eq: estim residuals}
 \hateps_{ji} 
  = \frac{Y_{ji} - \hatm_{j}(\XX_i)}{\hatsigma_{j}(\XX_i)},
 \qquad i = 1,\dotsc,n, \quad j=1,2, 
\end{equation}
where $\hatm_{j}$ and $\hatsigma_{j}$ are the estimates of 
the unknown functions~$m_{j}$ and $\sigma_{j}$. In what follows 
we will consider the local polynomial estimators of order~$p$; see 
\cite{fan_gijbels_1996} or \cite{masry1996multivariate}, among others. 
Here, for a given~$\xx=(x_1,\dotsc,x_d)\tr$,  $\hatm_{j}(\xx)$  is 
defined as $\widehat{\beta}_\mathbf{0}$, the component of $\widehat{\bbeta}$ with multi-index 
$\mathbf{0}=(0,\dotsc,0)$, where $\widehat{\bbeta}$ is the solution to the minimization problem 
\begin{equation} \label{minimization}
 \min_{\bbeta=(\beta_{\ii})_{\ii \in \setI}}\, \sum_{\ell =1}^n \Big[Y_{j\ell} 
  - \sum_{\ii \in \setI} \beta_{\ii}\,\psi_{\ii,\hh_n}\big(\XX_\ell-\xx\big) \Big]^2 
  K_{\hh_{n}}(\XX_{\ell}-\xx). 
\end{equation}
Here $\setI=\setI(d,p)$ denotes the set of multi-indices $\ii=(i_1,\dotsc,i_d)$ with $i.=i_1+\dots+i_d\leq p$ and 
$\psi_{\ii,\hh_n}(\xx)=\prod_{k=1}^d \big(\frac{x_k}{h_n^{(k)}}\big)^{i_k}\frac{1}{i_k!}$.
Further
$$
 K_{\hh_{n}}(\XX_{\ell}-\xx) = \prod_{k=1}^{d} \tfrac{1}{h_n^{(k)}}\,k\Big(\tfrac{X_{\ell k}-x_{k}}{h_n^{(k)}}\Big),  
$$
with $k$ being a kernel function and $\hh_{n} 
= \big(h_n^{(1)}, \dotsc, h_n^{(d)}\big)$ the smoothing parameter. 

Further $\sigma_{j}^{2}(\xx)$ is estimated as 
$$
 \hatsigma_{j}^{2}(\xx) = \widehat{s}_{j}(\xx) - \hatm_{j}^{2}(\xx), 
$$
where $\widehat{s}_{j}(\xx)$ is obtained in the same way as 
$\hatm_{j}(\xx)$ but with $Y_{j\ell}$ replaced with $Y_{j\ell}^2$. 

For any function $f$ defined on $\JJ$, interval in $\mathbb{R}^d$, define for $\ell\in\mathbb{N}$, $\delta\in (0,1]$,
$$ 
\|f\|_{\ell+\delta} = \max_{\ii \in \setI(d,\ell)} \sup_{\xx \in \JJ} |D^{\ii}f(\xx)| 
 + \max_{\ii \in \setI(d,\ell)\atop i.=\ell} \sup_{\xx,\xx' \in \JJ\atop \xx\neq\xx'} \frac{|D^{\ii}f(\xx)-D^{\ii}f(\xx')|}{\|\xx-\xx'\|^{\delta}}, $$
where $  D^{\ii} = \frac{\partial^{i.}}{\partial x_1^{i_1} \ldots \partial x_d^{i_d}}, $
and $\|\cdot\|$ is the Euclidean norm on $\mathbb{R}^d$.
Denote by $C_M^{\ell+\delta}(\JJ)$ the set of $\ell$-times differentiable functions $f$ on $\JJ$, such that $\|f\|_{\ell+\delta}\leq M$. 
Denote by $\widetilde{C}_2^{\ell+\delta}(\JJ)$ the subset of~$C_2^{\ell+\delta}(\JJ)$ of the functions that satisfy  $\inf_{\xx\in\JJ} f(\xx)\geq \frac 12$.


In what follows we are going to prove that under appropriate 
regularity assumptions using the estimated 
residuals~\eqref{eq: estim residuals} instead of the 
(true) unobserved innovations~$\eps_{ji}$ affects  
neither the asymptotic distribution of the empirical copula 
estimator nor the parametric estimator of a copula.

\subsection{Empirical copula estimation}\label{section2.1}
Mimicking~$\eqref{eq: explicit expression for copula}$ 
the copula function~$C$ can be estimated nonparametrically as 
\begin{equation}  \label{copula estimates based on residuals}
 \widetilde{C}_{n}(u_{1},u_{2})
= \hatF_{\hateps} 
\Big(\hatF_{1\hateps}^{-1}(u_1), 
\hatF_{2\hateps}^{-1}(u_2)
\Big), 
\end{equation}
where
\begin{equation} \label{eq: ecdf of residuals Wn}
 \hatF_{\hateps}(y_1,y_2) =   
  \frac{1}{W_n} 
  \suman w_{ni} \, \ind \big\{\hateps_{1i} \leq y_1,  
 \hateps_{2i} \leq y_2 \big\}, 
\end{equation}
is the estimate of the joint distribution function~$F_{\eps}(y_1,y_2)$ 
and 
$$
\hatF_{j\hateps}(y) = \frac{1}{W_n} 
  \suman w_{ni} \, \ind \big\{\hateps_{ji} \leq y \big\}, \quad j=1,2 ,
$$
%
the corresponding marginal empirical cumulative distribution functions. 
Here we make use of a weight function $w_n(\xx)=\ind\{\xx\in \JJ_n\}$ and put $w_{ni}=w_{n}(\XX_{i})$ 
as well as $ W_{n}=\sum_{j=1}^{n} w_{nj}  $. For some real positive sequence $c_n\to\infty$ we set $\JJ_n=[-c_n,c_n]^d$.

Now let $C_{n}^{(or)}$ be the `oracle' estimator
based on the unobserved innovations, i.e. 
\begin{equation} \label{eq: oracle estim}
 C_{n}^{(or)} (u_1,u_2) = 
\hatF_{\eps} \Big(\hatF_{1\eps}^{-1}(u_1), 
\hatF_{2\eps}^{-1}(u_2)
\Big),  
%
\end{equation}
where 
$\hatF_{\eps}(z_1,z_2) = \frac{1}{n} 
\suman \ind \big\{ \eps_{1i} \leq z_1, \eps_{2i} \leq z_2 \big\}$  
is the estimator of $F_{\eps}(z_1,z_2)$ based on the unobserved 
innovations and 
%
%
$\hatF_{j\eps}$ $(j=1,2)$ the corresponding marginal empirical cumulative distribution functions. 



\subsubsection*{Regularity assumptions}
\begin{itemize}
\item[$\mathbf{(\boldsymbol{\beta})}$] The process $(\XX_{i},Y_{1i},Y_{2i})_{i \in \mathbb{Z}}$ is strictly stationary and 
absolutely regular ($\beta$-mixing) with the mixing coefficient
$\beta_{i}$ that satisfies 
$\beta_{i}=O(i^{-b})$ with $b>d+3$. 
\item[$\mathbf{(F_{\beps})}$] 
The  second-order partial derivatives
$F_{\eps}^{(1,1)}$, $F_{\eps}^{(1,2)}$ and $F_{\eps}^{(2,2)}$ of 
the joint cumulative distribution function 
$F_{\eps}(y_1,y_2) = \PP(\eps_{1} \leq y_1, \eps_2 \leq y_2)$,  
with
$F_{\eps}^{(j,k)}(y_1, y_2)=\frac{\partial^{2} F_{\eps}(y_1,y_2)}{\partial y_{j} \partial y_{k}}$, satisfy 
$$
 \max_{j,k \in \{1,2\}}\sup_{y_1,y_2 \in \RR} \big|F_{\eps}^{(j,k)}(y_1,y_2)
  \big|(1 + |y_j|)(1 + |y_k|) \big| < \infty. 
$$
Further the innovation density $f_{j\eps}$ $(j=1,2)$ 
satisfies 
%
$$
\qquad  \lim_{u \to 0_{+}}\big(1+F_{j\eps}^{-1}(u)\big)f_{j\eps}\big(F_{j\eps}^{-1}(u)\big)=0  
\quad \text{ and }  \quad 
\lim_{u \to 1_{-}}\big(1+F_{j\eps}^{-1}(u)\big)f_{j\eps}\big(F_{j\eps}^{-1}(u)\big) = 0. 
$$
%
\item[$\mathbf{(F_{\XX})}$]  The observations $\XX_i$ ($i \in \ZZ$) have density $f_{\XX}$ that is bounded and  differentiable with bounded uniformly continuous first order partial derivatives. 
Suppose that the sequence $c_n$ which is of order $O\big((\log n)^{1/d}\big)$ 
is chosen in such a way  that $\inf_{\xx\in \JJ_{n}}f_{\XX}(\xx)$  converges 
to zero not faster than some negative power of $\log n$. 
%
\item[$\mathbf{(M)}$]
For some $s>\frac{2b-2-d}{b-3-d}$ with $b$ from assumption ($\boldsymbol{\beta}$), for $j=1,2$, $\Es |\varepsilon_{j0}|^{2s} < \infty$, 
the functions $\sigma_j^{2s}f_{\XX}$ and $|m_j\sigma_j|^sf_{\XX}$ are bounded and
 there are some $i^*\in\mathbb{N}$, $B>0$ such that for all $i\geq i^*$,
\begin{eqnarray*}
&& \sup_{\xx_0,\xx_i}\sigma_j^2(\xx_0)\sigma_j^2(\xx_i)f_{\XX_{0},\XX_{i}}(\xx_0,\xx_i)\leq B, \\
&& \sup_{\xx_0,\xx_i}\big|m_j(\xx_0)m_j(\xx_i)\big|\,\sigma_j(\xx_0)\sigma_j(\xx_i)f_{\XX_{0},\XX_{i}}(\xx_0,\xx_i)\leq B, 
\end{eqnarray*}
 where $f_{\XX_{0},\XX_{i}}$ denotes the joint density of $(\XX_0,\XX_i)$ and is bounded 
(for  $i\geq i^*$). 
%
%
%
%
\item[$\mathbf{(m\bsigma)}$] 
Let, for $j=1,2$ and for each $n\in\mathbb{N}$, $m_{j}$ and $\sigma_{j}$  be elements of $C_{M_n}^{p+1}(\JJ_n)$ for some sequence $M_n$ that is either bounded or diverges to infinity not faster than some power of $\log n$. 
Further, assume $\E[\sigma_j^4(\XX_1)]<\infty$
and  
that $\min_{j=1,2}\inf_{\xx\in \JJ_n}\sigma_j(\xx)$ is either bounded away from zero or converges to zero not faster than a negative power of $\log n$. 
%
\item[$\mathbf{(Bw)}$] 
There exists a sequence $h_{n}$ such that $\tfrac{h_{n}^{(k)}}{h_n} \to a_{k}$, 
where $a_{k} \in (0,\infty)$, $k=1,\dotsc,d$. 
Further, 
there exists some $\delta > \tfrac{d}{b-1}$ such that
\begin{equation}\label{Hansen3}
nh_n^{2p+2} (\log n)^D=o(1),\quad nh_n^{3d+2\delta} (\log n)^{-D}\to\infty
\end{equation}
for all $D>0$. 
%
%
%

\item[$\mathbf{(k)}$] 
$k:\mathbb{R}\to\mathbb{R}$ is a symmetric ($d+2$)-times continuously differentiable probability density function supported on $[-1, 1]$.
%
 
%
\end{itemize}

\begin{remark}
Using $F_{\eps}(y_1,y_2) = C\big(F_{1\eps}(y_1), F_{2\eps}(y_2)\big)$  
assumption $\mathbf{(F_{\beps})}$ requires that 
\begin{multline*}
\max_{j,k \in \{1,2\}} \sup_{u_1,u_2 \in [0,1]}\Big|C^{(j,k)}(u_1,u_2)\,f_{j\eps}\big(F_{j\eps}^{-1}(u_j)\big)\, f_{k\eps}\big(F_{k\eps}^{-1}(u_k)\big)
\\ 
 + C^{(j)}(u_1,u_2)\,f'_{j\eps}\big(F_{j\eps}^{-1}(u_j)\big)
\ind\{j=k\} \Big|
 \Big(1 + \big|F_{j\eps}^{-1}(u_j)\big|\Big) 
 \Big(1 + \big|F_{k\eps}^{-1}(u_k)\big| \Big) < \infty,  
\end{multline*}
where $C^{(j)}(u_1,u_2) = \frac{\partial C(u_1,u_2)}{\partial u_j}$ 
and  $C^{(j,k)}(u_1,u_2) = \frac{\partial^2 C(u_1,u_2)}{\partial u_j \partial u_k}$ 
stand for the first and second order partial derivatives of the copula function. 

Thus provided that for some $\eta > 0$
\begin{equation*}
 C^{(j,k)}(u_1,u_2) = O\Big(\tfrac{1}{u_j^{2\,\eta}(1-u_j)^{2\,\eta}
u_k^{2\,\eta}(1-u_k)^{2\,\eta}}\Big),
\end{equation*}
then we need that the functions 
$f_{j\eps}\big(F_{j\eps}^{-1}(u)\big)(1 + \big|F_{j\eps}^{-1}(u)\big|)$ 
are of order $O(u^{\eta}(1-u)^{\eta})$ and the functions   
$f'_{j\eps} \big(F_{j\eps}^{-1}(u_j) \big) \big(1 + |F_{j\eps}^{-1}(u_j)| \big)^{2}$ 
are bounded. 
%
\end{remark}

\begin{remark}
Parts of our assumptions are reproduced from \cite{hansen2008} because we apply his results about uniform rates of convergence for kernel estimators several times in our proofs. Note that in his Theorem 2 we set $q=\infty$ to simplify the assumptions. Further note that 
if beta mixing coefficients are diminishing exponentially fast then it is sufficient 
to assume $s > 2$ in $\mathbf{(M)}$. 
\end{remark}

\begin{remark} \label{remark-bandwidth}
Note that the bandwidth conditions (\ref{Hansen3}) can be fulfilled iff $2p+2>3d+2\delta$, i.e. in view of assumption $\mathbf{(Bw)}$ iff $2p+2>3d+\frac{2d}{b-1}$. 
Thus if $b > 2d +1$, then for $d=1$ it is sufficient to take $p=1$ and for $d=2$ one can take $p = 3$. In general with increasing dimension $d$ higher smoothness of the unknown functions has to be assumed and higher order local polynomial estimators have to be used. This phenomenon is well known 
in the context of nonparametric inference.  

So in general one can choose  the bandwidth as $h_n\sim n^{-\frac{1}{a}}$, where 
$a \in (3d + \tfrac{2d}{b-1}, 2p+2)$. The problem is that if one wants to take $p$ 
as small as possible, the range of possible values of~$a$ is rather short which 
makes the choice of $a$ rather delicate. To make the choice of $a$ more flexible 
in practice one can for instance assume that $b > 10d + 1$ which (among others) 
includes models for beta mixing coefficients diminishing exponentially fast. 
Now for $d=1$ and $p=1$ one can take $a$ in the interval $(3.1, 4)$. See also 
the bandwidth choice in our simulation study in Section~\ref{sec: simul study}.  
\end{remark}

\begin{remark}
The choice of $c_n$ is a delicate problem in practice. As far as we know 
even in analogous settings \citep[see e.g.][and the references therein]{MSW2009,dette2009goodness,koul2015goodness}   
this problem has not been touched yet. Note that the weight function  
$w_n(\xx)$ is chosen in the simplest possible form 
in order to simplify the presentation of the proof. 
On the other hand in practice it is of interest to use more general 
forms of $\JJ_n$. Further as the density~$f_{\Xb}$ is unknown, 
data-driven procedures to choose~$\JJ_n$ are of interest. 
In the simulation study in Section~\ref{sec: simul study} 
we suggest a data-driven procedure for the choice of the weighting function  
in the case $d=1$. Nevertheless the data driven choice of~$\JJ_n$ 
(in particular for general~$d$) and its theoretical justification calls 
for further research. 
\end{remark}

\medskip

\begin{theorem} \label{thm equiv of Cn}
Suppose that assumptions $\mathbf{(\boldsymbol{\beta})}$,  
$\mathbf{(F_{\beps})}$, $\mathbf{(F_{\XX})}$, $\mathbf{(Bw)}$, $\mathbf{(M)}$,
$\mathbf{(k)}$, $\mathbf{(J_{n})}$ and $\mathbf{(m\bsigma)}$ are satisfied. Then 
$$
 \sup_{(u_1,u_2) \in [0,1]^2} \Big|\sqrt{n}\,\big[\widetilde{C}_{n}(u_1,u_2) - C_{n}^{(or)}(u_1,u_2)\big] \Big| 
 = o_{P}(1).  
$$
\end{theorem}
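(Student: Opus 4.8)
The plan is to reduce everything to a single uniform stochastic expansion of the bivariate empirical distribution of the residuals relative to its oracle counterpart, and then to exploit the specific affine form of the correction terms, which cancel once the margins are standardised to form a copula. Concretely, I would first establish, uniformly in $(y_1,y_2)\in\mathbb{R}^2$,
\begin{equation*}
\hatF_{\hateps}(y_1,y_2) = \hatF_{\eps}(y_1,y_2) + \sum_{j=1}^{2} F_{\eps}^{(j)}(y_1,y_2)\,\big[a_{jn} + y_j\,b_{jn}\big] + R_n(y_1,y_2),
\end{equation*}
where $a_{jn} = \tfrac{1}{W_n}\sum_{i=1}^n w_{ni}\,\tfrac{\hatm_j(\XX_i)-m_j(\XX_i)}{\sigma_j(\XX_i)}$ and $b_{jn} = \tfrac{1}{W_n}\sum_{i=1}^n w_{ni}\,\tfrac{\hatsigma_j(\XX_i)-\sigma_j(\XX_i)}{\sigma_j(\XX_i)}$ do \emph{not} depend on $(y_1,y_2)$, and $\sup_{(y_1,y_2)}|R_n(y_1,y_2)| = o_P(n^{-1/2})$. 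The essential structural feature is that the correction in direction $j$ is the product of the partial derivative $F_{\eps}^{(j)}$ with a factor affine in $y_j$ whose coefficients $a_{jn},b_{jn}$ are free of $(y_1,y_2)$; this is precisely what makes the pre-estimation of $m_j,\sigma_j$ irrelevant for the copula.

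\textbf{Proving the expansion.} Writing $\{\hateps_{ji}\leq y_j\} = \{\eps_{ji}\leq \widetilde y_{ji}\}$ with $\widetilde y_{ji} = \tfrac{\hatsigma_j(\XX_i)}{\sigma_j(\XX_i)}\,y_j + \tfrac{\hatm_j(\XX_i)-m_j(\XX_i)}{\sigma_j(\XX_i)}$, I would split $\hatF_{\hateps}-\hatF_{\eps}$ into a \emph{drift} part $\bar F_n - \hatF_{\eps}$ and a \emph{fluctuation} part $\hatF_{\hateps}-\bar F_n$, where $\bar F_n(y_1,y_2) = \tfrac{1}{W_n}\sum_i w_{ni}\,F_{\eps}(\widetilde y_{1i},\widetilde y_{2i})$ is the conditional mean of the feasible indicators given the covariates and the pre-estimated functions (using that $(\eps_{1i},\eps_{2i})$ is independent of $\XX_\ell$, $\ell\leq i$). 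For the drift, a first-order Taylor expansion of $F_{\eps}(\widetilde y_{1i},\widetilde y_{2i})$ about $(y_1,y_2)$ produces exactly the displayed linear term, while the second-order remainder is bounded via assumption $\mathbf{(F_{\beps})}$ (which controls $F_{\eps}^{(j,k)}(y_1,y_2)(1+|y_j|)(1+|y_k|)$) together with the uniform rates of $\hatm_j-m_j$ and $\hatsigma_j-\sigma_j$ on $\JJ_n$ borrowed from \cite{hansen2008} under $\mathbf{(Bw)}$, $\mathbf{(m\bsigma)}$ and $\mathbf{(k)}$; it is $o_P(n^{-1/2})$ precisely because the bandwidth window forces the product of two estimation errors to be negligible. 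For the fluctuation part I would prove the stochastic-equicontinuity statement $\sup_{(y_1,y_2)}\big|\sqrt n(\hatF_{\hateps}-\bar F_n) - \sqrt n(\hatF_{\eps}-F_{\eps})\big| = o_P(1)$: since $\widetilde y_{ji}\to y_j$ uniformly, an oscillation bound for $\beta$-mixing empirical processes (assumptions $\mathbf{(\boldsymbol{\beta})}$ and $\mathbf{(M)}$) shows that the two centred processes coincide asymptotically, and the discrepancy between $\tfrac{1}{W_n}\sum w_{ni}$ and the unweighted $\tfrac1n\sum$ of the oracle is absorbed by the trimming assumption $\mathbf{(J_{n})}$.

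\textbf{Assembling the copula.} Letting $y_{3-j}\to+\infty$ yields the marginal analogue $\hatF_{j\hateps}(y) = \hatF_{j\eps}(y) + f_{j\eps}(y)[a_{jn}+y\,b_{jn}] + o_P(n^{-1/2})$, and inverting via the oracle Bahadur representation gives $\hatF_{j\hateps}^{-1}(u) - \hatF_{j\eps}^{-1}(u) = -\big[a_{jn}+F_{j\eps}^{-1}(u)\,b_{jn}\big] + o_P(n^{-1/2})$. Substituting the feasible quantiles $y_j^* = \hatF_{j\hateps}^{-1}(u_j)$ into $\widetilde C_n = \hatF_{\hateps}(y_1^*,y_2^*)$, applying the joint expansion at $(y_1^*,y_2^*)$, and then Taylor-expanding $\hatF_{\eps}(y_1^*,y_2^*)$ about the oracle points $y_j^{or} = \hatF_{j\eps}^{-1}(u_j)$ (legitimate since $y_j^*-y_j^{or}=O_P(n^{-1/2})$ and the oracle process is stochastically equicontinuous), the drift contribution $\sum_j F_{\eps}^{(j)}(y^{or})\,[a_{jn}+y_j^{or}b_{jn}]$ from the joint part is cancelled exactly by $\sum_j F_{\eps}^{(j)}(y^{or})\,(y_j^*-y_j^{or})$ from the shifted quantiles. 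Hence $\sqrt n(\widetilde C_n - C_n^{(or)}) = o_P(1)$ uniformly, the uniformity up to the boundary $u_j\in\{0,1\}$ being secured by the vanishing of $f_{j\eps}(F_{j\eps}^{-1}(u))(1+|F_{j\eps}^{-1}(u)|)$ at the endpoints in $\mathbf{(F_{\beps})}$.

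\textbf{Main obstacle.} I expect the hardest step to be the uniform control of the fluctuation part over the whole plane with $\beta$-mixing data and a random, pre-estimated threshold: one cannot invoke a fixed bracketing class, so uniform kernel rates from \cite{hansen2008} must be combined with a chaining/oscillation argument for mixing empirical processes, while simultaneously tracking the tails — this is where the weights $(1+|y_j|)$ in $\mathbf{(F_{\beps})}$ are indispensable. A close second is verifying that the quadratic Taylor remainder in the drift (and the $b_{jn}(y_j^*-y_j^{or})$ cross-terms) is $o_P(n^{-1/2})$, which is exactly the role of the bandwidth conditions \eqref{Hansen3} and the matching of the polynomial order $p$ to the dimension $d$.
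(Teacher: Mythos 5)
Your overall strategy coincides with the paper's: both split the feasible empirical c.d.f.\ of the residuals into a drift part (a Taylor expansion of $F_{\eps}$ at the perturbed thresholds, producing a correction $\sum_j F_{\eps}^{(j)}\cdot[\text{affine in }y_j]$ whose coefficients are free of $(y_1,y_2)$) and a fluctuation part, and both exploit the fact that this affine correction cancels exactly once the margins are standardised. The only presentational difference is how the cancellation is realised: the paper passes to the uniform scale via $\hatG_{n}$ and invokes Hadamard differentiability of the copula map, so that the drift $B_n$ satisfies $B_{n}(u_1,u_2)-C^{(1)}(u_1,u_2)B_{n}(u_1,1)-C^{(2)}(u_1,u_2)B_{n}(1,u_2)=0$, whereas you perform an explicit Bahadur-type quantile inversion; these are equivalent.

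The genuine gap is in the fluctuation step, which you rightly flag as the hardest point but then leave without a workable mechanism. You assert that ``one cannot invoke a fixed bracketing class'' and offer instead a generic chaining/oscillation argument; but an oscillation bound for a $\beta$-mixing empirical process is only available over a \emph{fixed} index class with controlled bracketing entropy, and the entire difficulty is that the thresholds $\widetilde y_{ji}$ involve the data-dependent functions $\hatm_j,\hatsigma_j$. (These are not independent of the $\eps_{ji}$, so your $\bar F_n$ is not a legitimate conditional expectation either; the centring has to be justified, not assumed.) The paper's resolution is precisely the device you rule out: Lemma~\ref{lemma hatm and hatsigma} constructs smooth surrogates $\hata_j,\hatb_j$ with $\|\hata_j\|_{d+\delta}=o_P(1)$ and $\|\hatb_j-1\|_{d+\delta}=o_P(1)$, shows they approximate $(\hatm_j-m_j)/\sigma_j$ and $\hatsigma_j/\sigma_j$ to $o_P(n^{-1/2})$ (so that a monotone $\gamma_n$-bracketing transfers the indicators), and proves they land with probability tending to one in the fixed H\"older-type balls $\mathcal{G},\widetilde{\mathcal{G}}$, whose bracketing entropy (Lemma~\ref{lem-covering-functionclass}) is integrable against the $\|\cdot\|_{2,\beta}$ norm; asymptotic equicontinuity of the mixing empirical process over this fixed class then annihilates the fluctuation. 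Without this embedding into a fixed smooth class --- which is where the interplay between $\nu$, $\delta$ and the bandwidth conditions \eqref{Hansen3} actually enters, rather than only in the drift remainder as your sketch suggests --- the fluctuation step does not close.
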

Note that Theorem~\ref{thm equiv of Cn} together with 
the weak convergence of $\sqrt{n}\,\big[C_{n}^{(or)}- C\big]$ 
\citep[see e.g., Proposition~3.1 of][]{Segers:2012}
implies that that process 
$\widetilde{\mathbb{C}}_{n}=\sqrt{n}\,\big[\widetilde{C}_{n} - C\big]$  
weakly converges in the space of bounded functions 
$\ell^{\infty}([0,1]^2)$ to a centred Gaussian process $G_{C}$,
which can be written as
\begin{equation*} 
 G_{C}(u_1, u_2)  = B_{C}(u_1, u_2)
  - C^{(1)}(u_{1},u_{2})\,B_{C}(u_{1},1) -
 C^{(2)}(u_1, u_2) \,B_{C}(1,u_{2}) \, ,
\end{equation*}
%
where $B_{C}$ is a Brownian bridge on $[0,1]^{2}$ with
covariance function
\begin{equation*} 
 \Es \big[B_{C}(u_{1},u_{2}) B_{C}(u_{1}',u_{2}')\big] = C(u_{1} \wedge u_{1}', u_{2} \wedge u_{2}') - C(u_{1},u_{2})\, C(u_{1}',u_{2}')\,.
\end{equation*}
Nevertheless when one uses this result in applications 
for statistical inference we recommend to replace  
the sample size $n$ with  $W_{n} = \suman w_{ni}$ in the formulas. 
The thing is that the copula is estimated in fact only from $W_{n}$ 
observations and this should be reflected in order to improve 
the finite sample performance of asymptotic inference procedures. 

\bigskip


\subsection{Semiparametric copula estimation}\label{section2.2}
The copula $C$ describes the dependency between the two time series of interest, given the covariate. For applications modeling this dependency structure parametrically is advantageous because a parametric model often gives easier access to interpretations. Goodness-of-fit testing will be considered in the next section. 
\\
Suppose that the joint distribution of 
$(\eps_{1i},\eps_{2i})$ is given by the copula function 
$C(u_1,u_2; \btheta)$, where $\btheta=(\theta_1,\dotsc,\theta_p)\tr$ is an unknown 
parameter that belongs to a parametric space~$\Theta \subset \RR^{p}$. 
In copula settings we are often interested in semiparametric estimation of the parameter~$\btheta$, i.e. 
estimation of $\btheta$ without making any parametric assumption on the marginal 
distributions~$F_{1\eps}$ and~$F_{2\eps}$. The methods of semiparametric 
estimation for i.i.d.\ settings are summarized in \cite{tsukahara_2005}. 
The question of interest is 
what happens if we use the estimated residuals~\eqref{eq: estim residuals} 
instead of the unobserved innovations~$\eps_{ji}$. 
Generally speaking, thanks to Theorem~\ref{thm equiv of Cn} the answer is that using $\hateps_{ji}$ instead of $\eps_{ji}$ does not change the asymptotic 
distribution provided that the parameter of interest can be written as 
a Hadamard differentiable functional of a copula. 

\subsubsection{Method-of-Moments using rank correlation} \label{eq: subsubsec methods of moments}
This method is in a general way described for instance 
in \citet[Section~5.5.1]{embrechts2005quantitative}. To illustrate 
the application of Theorem~\ref{thm equiv of Cn} for this 
method consider that the parameter $\theta$ is one-dimensional.  
Then the method of the inversion of Kendall's tau is a 
very popular method of estimating the unknown parameter. 
For this method the estimator of $\theta$ is given by 
$$
 \widehat{\theta}_{n}^{(ik)} = \tau^{-1}(\widehat{\tau}_{n}), 
$$
where 
$$
 \tau(\theta) = 4\int_{0}^{1}\int_{0}^{1} C(u_1,u_2;\theta)\,dC(u_1,u_2;\theta) - 1
$$
is the theoretical Kendall's tau 
and $\widehat{\tau}_{n}$ is an estimate of Kendall's tau. In our 
settings the Kendall's tau would be computed from the estimated 
residuals $(\hateps_{1i}, \hateps_{2i})$ for which $w_{ni} > 0$. 
By Theorem~\ref{thm equiv of Cn} 
and Hadamard differentiability of Kendall's tau proved in \citet[Lemma~1]{ogv_sjs_2011}, 
the estimators of Kendall's tau based on $\hateps_{ji}$ or on $\eps_{ji}$ are asymptotically equivalent. Thus provided that 
$\tau'(\theta) \neq 0$ one gets that 
$$
 \sqrt{n}\,\big(\widehat{\theta}_{n}^{(ik)} - \theta \big) 
 \inDist \mathsf{N}\Big(0, \tfrac{\sigma_{\tau}^{2}}{[\tau'(\theta)]^2}\Big), 
 \quad \text{where} \quad 
 \sigma_{\tau}^{2} = 
   \var\Big\{ 8\,C(U_{11},U_{21};\theta) - 4\,U_{11} -  4\,U_{21}\Big\},  
$$
and 
\begin{equation} \label{eq: Ui}
 \big(U_{11}, U_{21}\big) = \big(F_{1\eps}(\eps_{11}), F_{2\eps}(\eps_{21})\big). 
\end{equation}
Analogously one can show that working with residuals has asymptotically negligible effects 
also for the method of moments introduced in \cite{brahimi2012semiparametric}. 


\smallskip 

\subsubsection{Minimum distance estimation}
Here one can follow for instance \citet[Section~3.2]{tsukahara_2005}.  
Note that thanks to Theorem~\ref{thm equiv of Cn} 
the proof of Theorem~3 of \cite{tsukahara_2005} does not change 
when $C_{n}^{(or)}$ is replaced with $\widetilde{C}_{n}$. 
Thus provided assumptions (B.1)-(B.5) of \cite{tsukahara_2005} are 
satisfied with 
$\bdelta(u_1,u_2;\btheta) = \frac{\partial C(u_1,u_2;\btheta)}{\partial \btheta}$, then the estimator defined as 
$$
 \hatbtheta_n^{(md)} = \argmin_{\tt \in \Theta} \iint_{[0,1]^2} \big(\widetilde{C}_{n}(u_1,u_2) - C(u_1,u_2; \tt)\big)^2\, 
 du_1\,du_2 
$$
is asymptotically normal and satisfies 
$$
 \sqrt{n}\Big(\hatbtheta_n^{(md)} - \btheta \Big) 
 \inDist \mathsf{N}\Big(\mathbf{0}_{p}, \bSigma^{(md)} \Big), 
$$
where 
%
\begin{align*}
 \bSigma^{(md)} &= \var\bigg\{ \iint_{[0,1]^2}  \bgamma(u_1,u_2;\btheta) 
 \Big[\ind\{U_{11} \leq u_1,U_{21} \leq u_2\} 
\\ 
& \qquad \qquad \qquad \qquad  \qquad \qquad    
 - \sum_{j=1}^{2}  C^{(j)}(u_1,u_2; \btheta)\, \ind\{U_{j1} \leq u_j\}
   \Big]\,du_1\,du_2  
   \Bigg\},  
\end{align*}
with 
$$
 \bgamma(u_1,u_2;\btheta) = \Bigg[\iint_{[0,1]^2} 
 \bdelta(v_1,v_2;\btheta)\, \bdelta\tr(v_1,v_2;\btheta)\,   dv_1\,dv_2 
 \Bigg]^{-1} \bdelta(u_1,u_2;\btheta). 
$$



\subsubsection{M-estimator, rank approximate Z-estimators}
To define a general $M$-estimator let us introduce 
\begin{equation} \label{eq: Utilde}
  \big(\widetilde{U}_{1i}, \widetilde{U}_{2i} \big) = \tfrac{W_n}{W_n+1}\Big(\hatF_{1\hateps}\big(\hateps_{1i}\big), \hatF_{2\hateps}\big(\hateps_{2i}\big)\Big)
\end{equation}
that can be viewed as estimates of the unobserved $(U_{1i}, U_{2i})$. 
Note that the multiplier $\tfrac{W_n}{W_n+1}$ is introduced 
in order to have both of the coordinates of the vector 
$\big(\widetilde{U}_{1i}, \widetilde{U}_{2i} \big)$ 
bounded away from zero and one. The $M$-estimator of the parameter~$\btheta$ 
is now defined as  
\begin{equation*} 
 \hatbtheta_n = \argmin_{\tt \in \Theta} \, 
 \suman w_{ni}\, 
 \rho \big(\widetilde{U}_{1i}, \widetilde{U}_{2i}; \tt \big)
\end{equation*}
where  $\rho(u_1,u_2;\btheta)$ is a given loss function. 
This class of estimators includes among others the 
\textit{pseudo-maximum likelihood estimators} 
($\hatbtheta_n^{(pl)}$), 
for which $\rho(u_1,u_2;\btheta) = - \log c(u_1,u_2;\btheta)$, 
with $c(\cdot)$ being the copula density function.  

Note that the estimator $\hatbtheta_n$ is usually searched for 
as a solution to the estimating equations 
\begin{equation}
 \label{eq: estim equations} 
 \suman w_{ni}\, 
 \bphi\big(\widetilde{U}_{1i}, \widetilde{U}_{2i};\hatbtheta_n
 \big) = \boldsymbol{0}_{p}, 
\end{equation}
where $\bphi(u_1,u_2;\btheta) = \partial \rho(u_1,u_2;\btheta)/\partial \btheta$. In \cite{tsukahara_2005} the estimator defined as the 
solution of~\eqref{eq: estim equations} is called a rank approximate 
$Z$-estimator.

In what follows we give general assumptions under which 
there exists a consistent root ($\hatbtheta_n$) of the estimating 
equations~\eqref{eq: estim equations} that is asymptotically equivalent 
to the consistent root ($\hatbtheta_n^{(or)}$) of 
the `oracle' estimating equations given by 
%
\begin{equation}
 \label{eq: estim equations orac} 
\suman 
\bphi\Big(\widehat{U}_{1i}, \widehat{U}_{2i}; \hatbtheta_n^{(or)}
 \Big) = \boldsymbol{0}_{p},  
\end{equation}
%
where 
\begin{equation} \label{eq: Uhat}
  \big(\widehat{U}_{1i}, \widehat{U}_{2i} \big) = \tfrac{n}{n+1}\big(\hatF_{1\eps}(\eps_{1i}), \hatF_{2\eps}(\eps_{2i})\big)
\end{equation}
are the standard pseudo-observations calculated from the 
unobserved innovations and their marginal empirical 
distribution functions $\hatF_{j\eps}(y)$.  

Unfortunately, these general assumptions exclude some useful models (e.g.\ pseudo-maxi\-mum likelihood estimator 
in the Clayton family of copulas) for which the 
function $\bphi(u_1,u_2;\btheta)$ viewed as a function of $(u_1,u_2)$ 
is unbounded. 
The reason is that for empirical distribution 
functions calculated from estimated residuals $\hateps_{ji}$  
we lack some of the sophisticated 
results that are available for empirical distribution functions calculated from 
(true) innovations~$\eps_{ji}$. For such copula families one can use for instance the 
Method-of-Moments using rank correlation (see Section~\ref{eq: subsubsec methods of moments}) 
to stay on the safe side. Nevertheless the simulation study in Section~\ref{sec: simul study} 
suggests that the pseudo-maxi\-mum likelihood estimation can be used also for the Clayton copula 
(and probably also for other families of copulas with non-zero tail dependence) 
provided that the dependence is not very strong. 


\subsubsection*{Regularity assumptions}
In what follows let $\btheta$ stand for the true value of the 
parameter and $V(\btheta)$ for an open neighbourhood of~$\btheta$.  
 
\begin{itemize}
\item[$\mathbf{(Id)}$] $\btheta$ is a unique minimizer of  the 
function $r(\tt) = \Es \rho(U_{1i}, U_{2i}; \tt)$ and $\btheta$ is an 
inner point of $\Theta$. 
{ 
\item[$\mathbf{(\bphi)}$] There exists~$V(\btheta)$ 
such that for each $l_1,l_2 \in \{1,\dotsc,p\}$ the functions 
$\phi_{l_1}(u_1,u_2;\tt) = \tfrac{\partial \rho(u_1,u_2;\tt)}{\partial t_{l_1}}$  
and 
$\phi_{l_1,l_2}(u_1,u_2;\tt) = \tfrac{\partial \rho(u_1,u_2;\tt)}{\partial t_{l_1}
\partial t_{l_2}}$ are uniformly continuous in $(u_1,u_2)$ uniformly in~$\tt \in V(\btheta)$ 
and of uniformly bounded Hardy-Kraus variation \citep[see e.g.,][]{berghaus2017weak}. 
\item[$\mathbf{(\bphi^{(j)})}$] There exists~$V(\btheta)$ and a function $h(u_1,u_2)$ 
such that for each $\tt \in V(\btheta)$
$$
 \max_{j=1,2}\max_{l =1,\dotsc,p} \big|\phi_{l}^{(j)}(u_1,u_2;\tt)\big| \leq h(u_1,u_2), 
\text{ where } \phi_{l}^{(j)}(u_1,u_2;\tt) = \tfrac{\partial \phi_{l}(u_1,u_2;\tt)}{\partial u_j}
$$
and $ \Es  h(U_{11},U_{21}) < \infty$. 
}
\item[$\mathbf{(\bGamma)}$] 
Each element of the (matrix) function 
$\bGamma(\tt) = \Es \frac{\partial \bphi(U_1, U_2; \tt)}{\partial \tt \tr}$ 
is a continuous function on $V(\thetab)$ and the matrix $\bGamma = \bGamma(\btheta)$  
is positively definite. 
\end{itemize}

\begin{theorem} \label{thm equiv of param estim}
Suppose that the assumptions of Theorem~\ref{thm equiv of Cn} 
are satisfied and that 
also $\mathbf{(Id)}$, $\mathbf{(\bphi)}$,  $\mathbf{(\bphi^{(j)})}$, 
and $\mathbf{(\bGamma)}$ hold.  
{Then with probability going to one there exists a consistent root $\hatbtheta_n$ 
of the estimating equations~\eqref{eq: estim equations},  
which satisfies 
\begin{equation} \label{eq: thetan converges in distrib}
 \sqrt{n}\,\Big(\hatbtheta_n - \btheta \Big) \inDist 
  \mathsf{N}_{p}\big(\mathbf{0}_{p}, \bGamma^{-1} \,\bSigma\, \bGamma^{-1}\big), 
\end{equation}
where 
\begin{align*}
 \bSigma &= \var\Bigg\{\bphi\big(U_{11}, U_{21}; \btheta\big) 
  + \iint \big[\ind\{U_{11} \leq v_1\} - v_{1}\big]\,
    \tfrac{\partial \bphi(v_{1}, v_{2}; \btheta)}{\partial v_1}\,dC(v_1,v_2; \btheta) 
\\
 & \qquad \qquad  + \iint \big[\ind\{U_{21} \leq v_2\} - v_{2} \big]\,\tfrac{\partial \bphi(v_{1}, v_{2}; \btheta)}{\partial v_2}\,dC(v_1,v_2; \btheta) 
 \Bigg\}.  
\end{align*}
}
\end{theorem}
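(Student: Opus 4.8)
The plan is to reduce the residual-based estimating equations~\eqref{eq: estim equations} to the oracle ones~\eqref{eq: estim equations orac} by means of Theorem~\ref{thm equiv of Cn}, and then to appeal to the classical theory of rank approximate $Z$-estimators. The latter applies here because the oracle pseudo-observations $\big(\widehat{U}_{1i},\widehat{U}_{2i}\big)$ are built from the \emph{i.i.d.}\ innovations $(\eps_{1i},\eps_{2i})$, so that $C_n^{(or)}$ is an ordinary empirical copula and $\sqrt{n}\big(C_n^{(or)}-C\big)$ converges weakly to $G_C$ exactly as in the i.i.d.\ case. The first step is to rewrite both estimating functions as integrals against the respective empirical copulas: up to the edge factor $\tfrac{W_n}{W_n+1}\to 1$, which is negligible by the uniform continuity of $\bphi$ in $(u_1,u_2)$ granted by $\mathbf{(\bphi)}$, one has
\[
\frac{1}{W_n}\suman w_{ni}\,\bphi\big(\widetilde{U}_{1i},\widetilde{U}_{2i};\tt\big)
 = \iint_{[0,1]^2}\bphi(u_1,u_2;\tt)\,d\widetilde{C}_n(u_1,u_2) + o_P\big(n^{-1/2}\big),
\]
and the oracle sum equals $\iint \bphi(\cdot;\tt)\,dC_n^{(or)}$ up to the same order.

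For the reduction step I would use that each component $\phi_l(\cdot;\tt)$ has uniformly bounded Hardy--Krause variation, uniformly in $\tt\in V(\btheta)$, by $\mathbf{(\bphi)}$. Integration by parts for functions of bounded Hardy--Krause variation yields a signed measure $\nu_{\tt}$ of uniformly bounded total variation with
\[
\iint \bphi(\cdot;\tt)\,d\big(\widetilde{C}_n - C_n^{(or)}\big)
 = \iint \big(\widetilde{C}_n - C_n^{(or)}\big)\,d\nu_{\tt}
 + (\text{boundary terms}),
\]
where the boundary terms are of order $O(1/n)$ because both empirical copulas have essentially uniform margins, so that $\widetilde{C}_n(u_1,1)-C_n^{(or)}(u_1,1)$ and its analogue vanish up to a negligible order. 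Consequently
\[
\sup_{\tt\in V(\btheta)}\Big\|\sqrt{n}\iint \bphi(\cdot;\tt)\,d\big(\widetilde{C}_n - C_n^{(or)}\big)\Big\|
 \le \big\|\sqrt{n}\,(\widetilde{C}_n - C_n^{(or)})\big\|_{\infty}\cdot\sup_{\tt\in V(\btheta)}\|\nu_{\tt}\|
 = o_P(1)
\]
directly by Theorem~\ref{thm equiv of Cn}. Thus the residual-based and oracle estimating functions coincide up to $o_P(n^{-1/2})$ uniformly on $V(\btheta)$.

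Existence and consistency of a root then follow the standard $Z$-estimator argument: by $\mathbf{(Id)}$ the limit $\psi(\tt):=\Es\bphi(U_{11},U_{21};\tt)=\nabla r(\tt)$ vanishes only at the interior point $\btheta$, while bounded variation together with uniform convergence of the empirical copula gives $\iint\bphi(\cdot;\tt)\,d\widetilde{C}_n\to\psi(\tt)$ uniformly on $V(\btheta)$; combined with continuity and positive definiteness of $\bGamma(\tt)=\Es\,\partial\bphi/\partial\tt\tr$ from $\mathbf{(\bGamma)}$, this produces, with probability tending to one, a consistent root $\hatbtheta_n$. A one-term Taylor expansion of~\eqref{eq: estim equations} around $\btheta$, using the uniform convergence of the sample Jacobian to $\bGamma(\cdot)$, then gives the linearization
\[
\sqrt{n}\,\big(\hatbtheta_n-\btheta\big)
 = -\bGamma^{-1}\,\sqrt{n}\,\Psi_n(\btheta) + o_P(1),
\qquad
\Psi_n(\tt)=\frac{1}{W_n}\suman w_{ni}\,\bphi\big(\widetilde{U}_{1i},\widetilde{U}_{2i};\tt\big).
\]

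By the reduction step, $\sqrt{n}\,\Psi_n(\btheta)=\sqrt{n}\,\Psi_n^{(or)}(\btheta)+o_P(1)$ with $\Psi_n^{(or)}(\btheta)=\tfrac{1}{n}\suman\bphi(\widehat{U}_{1i},\widehat{U}_{2i};\btheta)$, so it remains to identify the limit of the oracle score. Writing $\sqrt{n}\,\Psi_n^{(or)}(\btheta)=\sqrt{n}\iint\bphi(\cdot;\btheta)\,d\big(C_n^{(or)}-C\big)$ (the term $\iint\bphi\,dC=\psi(\btheta)=\mathbf{0}$ drops out), I would substitute the weak limit $G_C$ and integrate by parts. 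The Brownian-bridge part $B_C$ of $G_C$ produces $\bphi(U_{11},U_{21};\btheta)$, whereas the two correction terms $-C^{(1)}B_C(\cdot,1)$ and $-C^{(2)}B_C(1,\cdot)$ produce the two integral corrections involving $\partial\bphi/\partial v_1$ and $\partial\bphi/\partial v_2$ appearing in $\bSigma$; here $\mathbf{(\bphi^{(j)})}$, with its integrable envelope $h$, is precisely what legitimizes the integration by parts and the passage to the limit. This yields the i.i.d.\ influence-function representation $\sqrt{n}\,\Psi_n^{(or)}(\btheta)=n^{-1/2}\suman\Xi_i+o_P(1)$, where $\Xi_i$ is the bracketed expression with $\var(\Xi_1)=\bSigma$, so the central limit theorem for i.i.d.\ vectors gives $\sqrt{n}\,\Psi_n(\btheta)\inDist\mathsf{N}_p(\mathbf{0}_p,\bSigma)$; combining this with the linearization establishes~\eqref{eq: thetan converges in distrib}. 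I expect the main obstacle to be the reduction step itself, namely transferring the $o_P(1)$ bound of Theorem~\ref{thm equiv of Cn} into an $o_P(n^{-1/2})$ bound for the estimating function \emph{uniformly} over $\tt\in V(\btheta)$ while controlling the Hardy--Krause boundary terms and the edge corrections of the pseudo-observations, since the weighted-norm refinements available for the empirical process of the true innovations are not at our disposal for the residual-based empirical copula.
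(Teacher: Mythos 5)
Your proposal is correct and follows essentially the same route as the paper's Appendix~B: agreement of $\widetilde{C}_n$ with the raw pseudo-observation empirical copula up to $O_P(1/n)$, multivariate integration by parts for the Hardy--Krause bounded-variation score components to transfer the $o_P(1)$ bound of Theorem~\ref{thm equiv of Cn} to the estimating functions uniformly over $V(\btheta)$ (with the boundary/marginal terms cancelling as deterministic quantities $A_l(\tt)$ common to both versions), a standard $Z$-estimator consistency and mean-value linearization, and reduction of the score to the oracle score. The only cosmetic difference is that you sketch the influence-function representation of the oracle score directly from the weak limit $G_C$, whereas the paper simply cites Proposition~A.1(ii) of \cite{genest_et_al_1995} and Theorem~1 of \cite{ogvp_testing_rao_2017} for that final i.i.d.\ step.
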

{The proof of Theorem~\ref{thm equiv of param estim} is given in Appendix~B. 
Note that the asymptotic distribution of the estimator $\hatbtheta_n$ coincides 
with the distribution given in Section 4 of \cite{genest_et_al_1995} that 
corresponds to the  consistent root $\hatbtheta_n^{(or)}$ of the estimating equations \eqref{eq: estim equations orac}. 
Thus using the residuals instead of the true innovations has asymptotically negligible effect  
on the (first-order) asymptotic properties. In fact, it can be even shown that 
both $\hatbtheta_n$ and $\hatbtheta_n^{(or)}$ have the same asymptotic representations  
and thus 
$$
 \sqrt{n}\big(\hatbtheta_n - \hatbtheta_n^{(or)} \big) = o_{P}(1). 
$$
}
%


\subsection{Goodness-of-fit testing}\label{section2.3}
When modeling multivariate data using copulas parametrically one needs to choose 
a suitable family of copulas. When choosing the copula family 
tests of goodness-of-fit are often a useful tool. Thus we are 
interested in testing 
%
%
$ H_{0}:\, C \in \mathcal{C}_{0}$, 
where $\mathcal{C}_{0} = \{C_{\btheta}, \btheta \in \Theta \}$ is a
given parametric family of copulas.

Many testing methods have been proposed 
\citep[see e.g.][and the references therein]{genest-et-al-gof-2008, 
kojadinovic2009tests}. The most standard ones are based on the 
comparison of nonparametric and parametric estimators of a copula. 
For instance the Cram\'{e}r-von Mises statistic is given by 
\begin{equation} \label{gof-statistics test}
 S_n = \iint \big[\widetilde{C}_n(u_1,u_2) - C(u_1,u_2;\hatbtheta_{n})\big]^2 
   \,d \widetilde{C}_n(u_1,u_2), 
\end{equation}
where $\hatbtheta_{n}$ is an estimate of the unknown
parameter~$\btheta$. As the asymptotic distributions 
of $\widetilde{C}_n(u_1,u_2)$ and $\hatbtheta_{n}$ are 
the same as the asymptotic distribution of 
$\widetilde{C}_n^{(or)}(u_1,u_2)$ and $\hatbtheta_{n}^{(or)}$ 
we suggest that the significance of the test statistic can be 
assessed in the same way as in i.i.d.\ settings. Thus one can use for instance  
the parametric bootstrap by simply generating 
independent and identically distributed observations from 
the copula function $C(u_1,u_2;\hatbtheta_{n})$. The test 
statistic is then simply recalculated from this observations 
in the same way as if we directly observed the innovations. 
The only difference 
is that instead of generating $n$ observations we recommend to 
generate only $W_n$ observations. 

Similar remarks hold when testing other hypotheses about the copula such as symmetry, for instance. 
Note that testing $H_0: C(u_1,u_2)\equiv u_1u_2 $ provides a test for conditional independence 
of the two time series, given the covariate.

\section{Simulation study}\label{sec: simul study}

A small Monte Carlo study was conducted in order to compare the 
semiparametric estimators based on the residuals   
with the `oracle' estimators based on (unobserved) innovations. The inversion of Kendall's tau (IK) method and  the  maximum pseudo-likelihood (MPL) method were considered for  
the following five copula families: Clayton, Frank, Gumbel, normal, and Student with 4 degrees of freedom. 
The values of the parameters are chosen so that they correspond to the  Kendall's tau 
$\tau=0.25$, $0.50$ and $0.75$.  
The data were simulated from the following four models:
{\small %
 \begin{align*}
Y_{1i}&= (0.5+0.4 \mathrm{e}^{-0.8X_i^2})X_i + \sqrt{1+0.2 X_i^2} \varepsilon_{1i}, & 
Y_{2i}&= 0.5-0.5 X_i+\sqrt{1+0.4 X_i^2} \varepsilon_{2i},\tag{Mod 1} \label{m-2h}\\
%
 Y_{1i}&=0.7 Y_{1,i-1} + \eps_{1i}, &\quad Y_{2i}&=-0.5Y_{2,i-1} + \eps_{2i},\label{m-3}\tag{Mod 2}
\\
 Y_{1i}&=0.5 \frac{Y_{1,i-1}}{ 1+0.1Y_{1,i-1}^2}+ \eps_{1i}, &\quad Y_{2i}&=-0.4 Y_{2,i-1} + \eps_{2i},\label{m-4}\tag{Mod 3}
\\
 Y_{1i}&=\sigma_{1i}\eps_{1i}, \quad \sigma_{1i}^2=1+0.3 Y_{1,i-1}^2,&\quad Y_{2i}&=\sigma_{2i}\eps_{2i}, \quad \sigma_{2i}^2=5+0.2 Y_{2,i-1}^2\tag{Mod 4},
 \label{m-arch} 
\end{align*}}
where  the innovations $\eps_{ji}$, $j=1,2$, follow 
marginally the  standard  normal distribution, 
and  $X_i$ is an exogenous variable following  the AR  model $X_i=0.6X_{i-1}+\xi_i$ with $\xi_i$ 
being i.i.d.\ from a standard normal distribution. The simulations were conducted also for innovations $\eps_{ji}$, $j=1,2$ with Student marginals with 5 degrees of freedom, but the corresponding results are very similar. For brevity of the paper we do not present them here. 

\begin{table}[ht]
\centering
\begin{footnotesize}
\begin{tabular}{ccc|rrr|rrr|rrr}
    &&& \multicolumn{3}{c|}{$n=200$}&\multicolumn{3}{c|}{$n=500$}& \multicolumn{3}{c}{$n=1000$}\\
 Model &$\tau$& estim & bias & SD &RMSE& bias & SD &RMSE& bias & SD &RMSE \\
 \hline
\hline 
\parbox[c]{2mm}{\multirow{6}{*}{\vspace{-2mm}\rotatebox[origin=c]{90}{\footnotesize{Known innovations}}}} & 0.25 &$\widehat{\theta}_{n}^{(ik,or)}$& -0.03 & 3.25 & 3.25 & 0.10 & 2.47 & 2.47 & 0.12 & 1.86 & 1.87 \\ 
   & 0.25 & $\widehat{\theta}_{n}^{(pl,or)}$  & 0.51 & 3.00 & 3.04 & 0.35 & 2.15 & 2.18 & 0.24 & 1.69 & 1.70 \\ 
   & 0.50& $\widehat{\theta}_{n}^{(ik,or)}$ & 0.01 & 2.64 & 2.64 & 0.06 & 2.03 & 2.03 & 0.07 & 1.52 & 1.52 \\ 
   & 0.50& $\widehat{\theta}_{n}^{(pl,or)}$  & 0.09 & 2.47 & 2.47 & 0.08 & 1.84 & 1.85 & 0.04 & 1.39 & 1.39 \\ 
   & 0.75 & $\widehat{\theta}_{n}^{(ik,or)}$ & 0.01 & 1.58 & 1.58 & 0.05 & 1.19 & 1.19 & 0.02 & 0.89 & 0.89 \\ 
   & 0.75 & $\widehat{\theta}_{n}^{(pl,or)}$ & -0.28 & 1.48 & 1.50 & -0.17 & 1.10 & 1.11 & -0.12 & 0.80 & 0.81 \\ 
   \hline
\hline
\multirow{6}{*}{\large 1} & 0.25 & $\widehat{\theta}_{n}^{(ik)}$ & -0.08 & \bf 4.66 & \bf 4.66 & \bf -0.22 & \bf 2.97 & \bf 2.97 & \bf -0.16 & \bf 2.06 & \bf 2.06 \\ 
   & 0.25 & $\widehat{\theta}_{n}^{(pl)}$ & \bf 0.62 & 4.15 & 4.19 & 0.07 & 2.62 & 2.62 & -0.02 & 1.82 & 1.82 \\ 
   & 0.50& $\widehat{\theta}_{n}^{(ik)}$ & -0.46 & \bf 3.94 & \bf 3.97 & -0.41 & \bf 2.48 & \bf 2.51 & -0.25 & \bf 1.74 & \bf 1.76 \\ 
   & 0.50& $\widehat{\theta}_{n}^{(pl)}$ & \bf -0.90 & 3.59 & 3.70 & \bf -0.81 & 2.25 & 2.39 & \bf -0.55 & 1.60 & 1.69 \\ 
   & 0.75 & $\widehat{\theta}_{n}^{(ik)}$ & -1.04 & 2.45 & 2.66 & -0.85 & 1.55 & 1.77 & -0.59 & 1.07 & 1.22 \\ 
   & 0.75 & $\widehat{\theta}_{n}^{(pl)}$ & \bf -3.00 & \bf 2.66 & \bf 4.01 & \bf -2.23 & \bf 1.59 & \bf 2.74 & \bf -1.57 & \bf 1.15 & \bf 1.94 \\ 
   \hline
\multirow{6}{*}{\large 2} & 0.25 & $\widehat{\theta}_{n}^{(ik)}$ & \bf -0.43 & \bf 4.78 & \bf 4.79 & -0.05 & \bf 2.93 & \bf 2.92 & 0.07 & \bf 2.08 & \bf 2.08 \\ 
   & 0.25 & $\widehat{\theta}_{n}^{(pl)}$ & 0.26 & 4.30 & 4.31 & \bf 0.25 & 2.58 & 2.59 & \bf 0.15 & 1.90 & 1.90 \\ 
   & 0.50& $\widehat{\theta}_{n}^{(ik)}$ & -0.91 & \bf 3.93 & \bf 4.03 & -0.24 & \bf 2.40 & \bf 2.41 & -0.09 & \bf 1.71 & \bf 1.72 \\ 
   & 0.50& $\widehat{\theta}_{n}^{(pl)}$ & \bf -1.50 & 3.62 & 3.92 & \bf -0.57 & 2.21 & 2.29 & \bf -0.36 & 1.60 & 1.64 \\ 
   & 0.75 & $\widehat{\theta}_{n}^{(ik)}$ & -1.96 & 2.63 & 3.27 & -0.70 & 1.52 & 1.68 & -0.39 & 1.05 & 1.12 \\ 
   & 0.75 & $\widehat{\theta}_{n}^{(pl)}$ & \bf -4.63 & \bf 3.19 & \bf 5.62 & \bf -2.14 & \bf 1.84 & \bf 2.82 & \bf -1.27 & \bf 1.16 & \bf 1.72 \\ 
   \hline
\multirow{6}{*}{\large 3} & 0.25 & $\widehat{\theta}_{n}^{(ik)}$ & \bf -0.43 & \bf 4.81 & \bf 4.83 & -0.09 & \bf 2.91 & \bf 2.91 & 0.03 & \bf 2.09 & \bf 2.09 \\ 
   & 0.25 & $\widehat{\theta}_{n}^{(pl)}$ & 0.24 & 4.37 & 4.38 & \bf 0.19 & 2.56 & 2.57 & \bf 0.11 & 1.90 & 1.90 \\ 
   & 0.50& $\widehat{\theta}_{n}^{(ik)}$ & -0.93 & \bf 3.97 & \bf 4.07 & -0.32 & \bf 2.41 & \bf 2.43 & -0.16 & \bf 1.72 & \bf 1.72 \\ 
   & 0.50& $\widehat{\theta}_{n}^{(pl)}$ & \bf -1.52 & 3.70 & 4.00 & \bf -0.66 & 2.20 & 2.30 & \bf -0.46 & 1.61 & 1.67 \\ 
   & 0.75 & $\widehat{\theta}_{n}^{(ik)}$ & -1.85 & 2.61 & 3.20 & -0.82 & 1.53 & 1.73 & -0.53 & 1.04 & 1.16 \\ 
   & 0.75 & $\widehat{\theta}_{n}^{(pl)}$ & \bf -4.39 & \bf 3.05 & \bf 5.35 & \bf -2.25 & \bf 1.78 & \bf 2.86 & \bf -1.46 & \bf 1.14 & \bf 1.85 \\ 
   \hline
\multirow{6}{*}{\large 4} & 0.25 & $\widehat{\theta}_{n}^{(ik)}$ & \bf -0.49 & \bf 4.85 & \bf 4.87 & -0.09 & \bf 2.93 & \bf 2.93 & 0.02 & \bf 2.10 & \bf 2.10 \\ 
   & 0.25 & $\widehat{\theta}_{n}^{(pl)}$ & 0.13 & 4.37 & 4.37 & \bf 0.14 & 2.58 & 2.59 & \bf 0.06 & 1.90 & 1.90 \\ 
   & 0.50& $\widehat{\theta}_{n}^{(ik)}$ & -0.82 & \bf 3.99 & \bf 4.07 & -0.25 & \bf 2.40 & \bf 2.41 & -0.12 & \bf 1.73 & \bf 1.73 \\ 
   & 0.50& $\widehat{\theta}_{n}^{(pl)}$ & \bf -1.54 & 3.70 & 4.01 & \bf -0.80 & 2.22 & 2.36 & \bf -0.53 & 1.60 & 1.69 \\ 
   & 0.75 & $\widehat{\theta}_{n}^{(ik)}$ & -1.22 & 2.57 & 2.84 & -0.49 & 1.48 & 1.56 & -0.28 & 1.04 & 1.08 \\ 
   & 0.75 & $\widehat{\theta}_{n}^{(pl)}$ & \bf -3.43 & \bf 2.76 & \bf 4.40 & \bf -1.93 & \bf 1.65 & \bf 2.54 & \bf -1.20 & \bf 1.10 & \bf 1.62 \\ 
   \hline
\hline
\end{tabular}
\caption{Estimation for Clayton copula with  normal marginals (100 multiples of bias, SD and RMSE)} 
\label{T.clayton.norm}
\end{footnotesize}
\end{table}

\begin{table}[ht]
\centering
\begin{footnotesize}
\begin{tabular}{ccc|rrr|rrr|rrr}
   && & \multicolumn{3}{c|}{$n=200$}&\multicolumn{3}{c|}{$n=500$}& \multicolumn{3}{c}{$n=1000$}\\
 Model &$\tau$& estim&bias & SD &RMSE& bias & SD &RMSE& bias & SD &RMSE \\
 \hline
\hline
 \parbox[c]{2mm}{\multirow{6}{*}{\vspace{-2mm}\rotatebox[origin=c]{90}{\footnotesize{Known innovations}}}} & 0.25 &$\widehat{\theta}_{n}^{(ik,or)}$ & -0.01 & 3.16 & 3.16 & -0.05 & 2.33 & 2.33 & -0.14 & 1.70 & 1.71 \\ 
   & 0.25 & $\widehat{\theta}_{n}^{(pl,or)}$ & 0.04 & 3.16 & 3.15 & -0.03 & 2.32 & 2.32 & -0.12 & 1.70 & 1.70 \\ 
   & 0.50&$\widehat{\theta}_{n}^{(ik,or)}$ & -0.02 & 2.37 & 2.37 & -0.01 & 1.73 & 1.73 & -0.09 & 1.28 & 1.28 \\ 
   & 0.50& $\widehat{\theta}_{n}^{(pl,or)}$ & 0.00 & 2.34 & 2.34 & -0.02 & 1.72 & 1.72 & -0.08 & 1.27 & 1.27 \\ 
   & 0.75 &$\widehat{\theta}_{n}^{(ik,or)}$ & -0.02 & 1.18 & 1.18 & 0.00 & 0.87 & 0.87 & -0.03 & 0.64 & 0.64 \\ 
   & 0.75 & $\widehat{\theta}_{n}^{(pl,or)}$ & -0.13 & 1.17 & 1.17 & -0.07 & 0.87 & 0.87 & -0.07 & 0.64 & 0.64 \\   
   \hline
\hline
\multirow{6}{*}{\large 1} & 0.25 & $\widehat{\theta}_{n}^{(ik)}$ & \bf -0.23 & \bf 4.54 & \bf 4.54 & \bf -0.11 & \bf 2.82 & \bf 2.82 & \bf -0.05 & \bf 1.92 & \bf 1.92 \\ 
   & 0.25 & $\widehat{\theta}_{n}^{(pl)}$ & -0.12 & 4.52 & 4.52 & -0.05 & 2.81 & 2.81 & -0.03 & 1.90 & 1.90 \\ 
   & 0.50& $\widehat{\theta}_{n}^{(ik)}$ & \bf -0.49 & \bf 3.46 & \bf 3.50 & \bf -0.32 & \bf 2.18 & \bf 2.20 & \bf -0.22 & \bf 1.43 & \bf 1.44 \\ 
   & 0.50& $\widehat{\theta}_{n}^{(pl)}$ & -0.47 & 3.40 & 3.43 & -0.30 & 2.15 & 2.17 & -0.21 & 1.42 & 1.43 \\ 
   & 0.75 & $\widehat{\theta}_{n}^{(ik)}$ & -0.97 & \bf 1.87 & 2.11 & -0.69 & 1.15 & 1.34 & -0.53 & 0.74 & 0.91 \\ 
   & 0.75 & $\widehat{\theta}_{n}^{(pl)}$ & \bf -1.22 & 1.84 & \bf 2.21 & \bf -0.81 & \bf 1.16 & \bf 1.41 & \bf -0.60 & \bf 0.75 & \bf 0.96 \\ 
   \hline
\multirow{6}{*}{\large 2} & 0.25 & $\widehat{\theta}_{n}^{(ik)}$ & \bf -0.28 & \bf 4.48 & \bf 4.49 & \bf -0.15 & \bf 2.78 & \bf 2.79 & \bf -0.21 & \bf 1.88 & \bf 1.89 \\ 
   & 0.25 & $\widehat{\theta}_{n}^{(pl)}$ & -0.17 & 4.47 & 4.47 & -0.12 & 2.77 & 2.77 & -0.19 & 1.88 & 1.89 \\ 
   & 0.50& $\widehat{\theta}_{n}^{(ik)}$ & \bf -0.77 & \bf 3.44 & \bf 3.53 & -0.29 & \bf 2.13 & \bf 2.14 & \bf -0.24 & \bf 1.41 & \bf 1.43 \\ 
   & 0.50& $\widehat{\theta}_{n}^{(pl)}$ & -0.75 & 3.40 & 3.48 & \bf -0.31 & 2.10 & 2.12 & -0.24 & 1.40 & 1.42 \\ 
   & 0.75 & $\widehat{\theta}_{n}^{(ik)}$ & -1.65 & 2.20 & 2.75 & -0.66 & \bf 1.18 & 1.35 & -0.38 & 0.75 & 0.84 \\ 
   & 0.75 & $\widehat{\theta}_{n}^{(pl)}$ & \bf -1.90 & \bf 2.20 & \bf 2.91 & \bf -0.78 & 1.18 & \bf 1.41 & \bf -0.43 & \bf 0.75 & \bf 0.87 \\ 
   \hline
\multirow{6}{*}{\large 3} & 0.25 & $\widehat{\theta}_{n}^{(ik)}$ & \bf -0.33 & \bf 4.53 & \bf 4.54 & \bf -0.17 & \bf 2.77 & \bf 2.77 & \bf -0.24 & \bf 1.89 & \bf 1.91 \\ 
   & 0.25 & $\widehat{\theta}_{n}^{(pl)}$ & -0.23 & 4.53 & 4.53 & -0.14 & 2.75 & 2.75 & -0.22 & 1.89 & 1.90 \\ 
   & 0.50& $\widehat{\theta}_{n}^{(ik)}$ & \bf -0.83 & \bf 3.48 & \bf 3.58 & -0.37 & \bf 2.09 & \bf 2.12 & \bf -0.32 & \bf 1.42 & \bf 1.45 \\ 
   & 0.50& $\widehat{\theta}_{n}^{(pl)}$ & -0.81 & 3.44 & 3.53 & \bf -0.38 & 2.06 & 2.10 & -0.32 & 1.41 & 1.44 \\ 
   & 0.75 & $\widehat{\theta}_{n}^{(ik)}$ & -1.62 & \bf 2.15 & 2.70 & -0.77 & \bf 1.14 & 1.37 & -0.51 & 0.76 & 0.92 \\ 
   & 0.75 & $\widehat{\theta}_{n}^{(pl)}$ & \bf -1.86 & 2.14 & \bf 2.84 & \bf -0.89 & 1.14 & \bf 1.44 & \bf -0.57 & \bf 0.77 & \bf 0.96 \\ 
   \hline
\multirow{6}{*}{\large 4} & 0.25 & $\widehat{\theta}_{n}^{(ik)}$ & \bf -0.37 & \bf 4.56 & \bf 4.57 & \bf -0.16 & \bf 2.79 & \bf 2.80 & \bf -0.22 & \bf 1.90 & \bf 1.91 \\ 
   & 0.25 & $\widehat{\theta}_{n}^{(pl)}$ & -0.26 & 4.54 & 4.54 & -0.13 & 2.79 & 2.79 & -0.20 & 1.90 & 1.91 \\ 
   & 0.50& $\widehat{\theta}_{n}^{(ik)}$ & \bf -0.76 & \bf 3.48 & \bf 3.56 & -0.30 & \bf 2.13 & \bf 2.15 & \bf -0.25 & \bf 1.43 & \bf 1.46 \\ 
   & 0.50& $\widehat{\theta}_{n}^{(pl)}$ & -0.73 & 3.43 & 3.50 & \bf -0.31 & 2.11 & 2.13 & -0.25 & 1.42 & 1.44 \\ 
   & 0.75 & $\widehat{\theta}_{n}^{(ik)}$ & -1.11 & \bf 2.05 & 2.34 & -0.48 & \bf 1.15 & 1.24 & -0.30 & \bf 0.76 & 0.81 \\ 
   & 0.75 & $\widehat{\theta}_{n}^{(pl)}$ & \bf -1.33 & 2.01 & \bf 2.41 & \bf -0.58 & 1.14 & \bf 1.28 & \bf -0.35 & 0.76 & \bf 0.83 \\ 
   \hline
\hline
\end{tabular}
\caption{Estimation for Frank copula with  normal marginals (100 multiples of bias, SD and RMSE)} 
\label{T.frank.norm}
\end{footnotesize}
\end{table}

\begin{table}[ht]
\centering
\begin{footnotesize}
\begin{tabular}{ccc|rrr|rrr|rrr}
   && &     \multicolumn{3}{c|}{$n=200$}&\multicolumn{3}{c|}{$n=500$}& \multicolumn{3}{c}{$n=1000$}\\
 Model &$\tau$& estim&bias & SD &RMSE& bias & SD &RMSE& bias & SD &RMSE \\
 \hline
\hline
  \parbox[c]{2mm}{\multirow{6}{*}{\vspace{-2mm}\rotatebox[origin=c]{90}{\footnotesize{Known innovations}}}}& 0.25 & $\widehat{\theta}_{n}^{(ik,or)}$ & 0.01 & 3.19 & 3.19 & 0.13 & 2.43 & 2.44 & 0.08 & 1.88 & 1.88 \\ 
   & 0.25 & $\widehat{\theta}_{n}^{(pl,or)}$  & 0.44 & 3.01 & 3.04 & 0.38 & 2.37 & 2.40 & 0.24 & 1.81 & 1.82 \\ 
   & 0.50&  $\widehat{\theta}_{n}^{(ik,or)}$ & 0.02 & 2.58 & 2.58 & 0.11 & 1.96 & 1.97 & 0.02 & 1.49 & 1.49 \\ 
   & 0.50& $\widehat{\theta}_{n}^{(pl,or)}$  & 0.24 & 2.42 & 2.43 & 0.27 & 1.89 & 1.91 & 0.12 & 1.44 & 1.44 \\ 
   & 0.75 & $\widehat{\theta}_{n}^{(ik,or)}$ & 0.02 & 1.48 & 1.48 & 0.06 & 1.12 & 1.12 & 0.00 & 0.84 & 0.84 \\ 
   & 0.75 & $\widehat{\theta}_{n}^{(pl,or)}$  & -0.06 & 1.35 & 1.36 & 0.02 & 1.05 & 1.05 & -0.03 & 0.78 & 0.78 \\ 
   \hline
\hline
\multirow{6}{*}{\large 1}& 0.25 & $\widehat{\theta}_{n}^{(ik)}$ & \bf -0.36 & \bf 4.76 & \bf 4.78 & 0.06 & \bf 3.06 & \bf 3.05 & \bf -0.09 & \bf 2.06 & \bf 2.06 \\ 
   & 0.25 & $\widehat{\theta}_{n}^{(pl)}$ & 0.24 & 4.68 & 4.68 & \bf 0.37 & 2.92 & 2.94 & 0.08 & 2.01 & 2.01 \\ 
   & 0.50& $\widehat{\theta}_{n}^{(ik)}$ & \bf -0.56 & \bf 3.92 & \bf 3.96 & \bf -0.17 & \bf 2.45 & \bf 2.46 & \bf -0.22 & \bf 1.69 & \bf 1.70 \\ 
   & 0.50& $\widehat{\theta}_{n}^{(pl)}$ & -0.36 & 3.83 & 3.84 & -0.10 & 2.35 & 2.35 & -0.20 & 1.65 & 1.66 \\ 
   & 0.75 & $\widehat{\theta}_{n}^{(ik)}$ & -0.85 & \bf 2.36 & 2.50 & -0.52 & \bf 1.42 & 1.51 & -0.49 & \bf 1.01 & 1.12 \\ 
   & 0.75 & $\widehat{\theta}_{n}^{(pl)}$ & \bf -1.35 & 2.32 & \bf 2.69 & \bf -0.84 & 1.36 & \bf 1.60 & \bf -0.73 & 0.99 & \bf 1.22 \\ 
   \hline
\multirow{6}{*}{\large 2} & 0.25 & $\widehat{\theta}_{n}^{(ik)}$ & -0.16 & \bf 4.58 & \bf 4.58 & 0.02 & \bf 2.91 & \bf 2.91 & 0.04 & \bf 2.10 & \bf 2.10 \\ 
   & 0.25 & $\widehat{\theta}_{n}^{(pl)}$ & \bf 0.49 & 4.42 & 4.45 & \bf 0.32 & 2.86 & 2.88 & \bf 0.20 & 2.03 & 2.04 \\ 
   & 0.50& $\widehat{\theta}_{n}^{(ik)}$ & \bf -0.66 & \bf 3.77 & \bf 3.82 & \bf -0.14 & \bf 2.36 & \bf 2.36 & \bf -0.09 & \bf 1.67 & \bf 1.68 \\ 
   & 0.50& $\widehat{\theta}_{n}^{(pl)}$ & -0.50 & 3.61 & 3.64 & -0.09 & 2.30 & 2.30 & -0.05 & 1.62 & 1.62 \\ 
   & 0.75 & $\widehat{\theta}_{n}^{(ik)}$ & -1.61 & 2.50 & 2.97 & -0.52 & 1.43 & 1.52 & -0.32 & \bf 0.99 & 1.04 \\ 
   & 0.75 & $\widehat{\theta}_{n}^{(pl)}$ & \bf -2.37 & \bf 2.52 & \bf 3.46 & \bf -0.95 & \bf 1.45 & \bf 1.73 & \bf -0.55 & 0.98 & \bf 1.13 \\ 
   \hline
\multirow{6}{*}{\large 3} & 0.25 & $\widehat{\theta}_{n}^{(ik)}$ & -0.18 & \bf 4.57 & \bf 4.57 & 0.01 & \bf 2.93 & \bf 2.92 & 0.02 & \bf 2.11 & \bf 2.11 \\ 
   & 0.25 & $\widehat{\theta}_{n}^{(pl)}$ & \bf 0.46 & 4.41 & 4.43 & \bf 0.31 & 2.87 & 2.88 & \bf 0.18 & 2.03 & 2.03 \\ 
   & 0.50& $\widehat{\theta}_{n}^{(ik)}$ & \bf -0.66 & \bf 3.73 & \bf 3.78 & \bf -0.18 & \bf 2.36 & \bf 2.37 & \bf -0.16 & \bf 1.69 & \bf 1.70 \\ 
   & 0.50& $\widehat{\theta}_{n}^{(pl)}$ & -0.50 & 3.59 & 3.62 & -0.13 & 2.31 & 2.32 & -0.13 & 1.63 & 1.64 \\ 
   & 0.75 & $\widehat{\theta}_{n}^{(ik)}$ & -1.52 & \bf 2.48 & 2.90 & -0.58 & \bf 1.41 & 1.53 & -0.42 & \bf 0.98 & 1.07 \\ 
   & 0.75 & $\widehat{\theta}_{n}^{(pl)}$ & \bf -2.20 & 2.44 & \bf 3.29 & \bf -0.98 & 1.40 & \bf 1.71 & \bf -0.64 & 0.96 & \bf 1.15 \\ 
   \hline
\multirow{6}{*}{\large 4} & 0.25 & $\widehat{\theta}_{n}^{(ik)}$ & -0.26 & \bf 4.60 & \bf 4.60 & -0.06 & \bf 2.97 & \bf 2.97 & 0.04 & \bf 2.12 & \bf 2.12 \\ 
   & 0.25 & $\widehat{\theta}_{n}^{(pl)}$ & \bf 0.30 & 4.47 & 4.47 & \bf 0.19 & 2.89 & 2.89 & \bf 0.18 & 2.04 & 2.05 \\ 
   & 0.50& $\widehat{\theta}_{n}^{(ik)}$ & \bf -0.63 & \bf 3.79 & \bf 3.84 & -0.13 & \bf 2.36 & \bf 2.37 & -0.11 & \bf 1.69 & \bf 1.69 \\ 
   & 0.50& $\widehat{\theta}_{n}^{(pl)}$ & -0.56 & 3.63 & 3.67 & \bf -0.16 & 2.31 & 2.32 & \bf -0.13 & 1.65 & 1.65 \\ 
   & 0.75 & $\widehat{\theta}_{n}^{(ik)}$ & -0.83 & \bf 2.38 & 2.52 & -0.29 & 1.40 & 1.43 & -0.21 & \bf 0.97 & 0.99 \\ 
   & 0.75 & $\widehat{\theta}_{n}^{(pl)}$ & \bf -1.51 & 2.35 & \bf 2.79 & \bf -0.71 & \bf 1.41 & \bf 1.57 & \bf -0.45 & 0.95 & \bf 1.05 \\ 
   \hline
\hline
\end{tabular}
\caption{Estimation for Gumbel copula with  normal marginals (100 multiples of bias, SD and RMSE)} 
\label{T.gumbel.norm}
\end{footnotesize}
\end{table}

\begin{table}[ht]
\centering
\begin{footnotesize}
\begin{tabular}{ccc|rrr|rrr|rrr}
    && & \multicolumn{3}{c|}{$n=200$}&\multicolumn{3}{c|}{$n=500$}& \multicolumn{3}{c}{$n=1000$}\\
 Model &$\tau$& estim&   bias & SD &RMSE& bias & SD &RMSE& bias & SD &RMSE \\
 \hline
\hline
\parbox[c]{2mm}{\multirow{6}{*}{\vspace{-2mm}\rotatebox[origin=c]{90}{\footnotesize{Known innovations}}}} & 0.25 & $\widehat{\theta}_{n}^{(ik,or)}$ & -0.02 & 3.13 & 3.13 & -0.05 & 2.32 & 2.31 & -0.03 & 1.78 & 1.77 \\ 
   & 0.25 & $\widehat{\theta}_{n}^{(pl,or)}$ & 0.38 & 2.99 & 3.02 & 0.22 & 2.19 & 2.20 & 0.13 & 1.66 & 1.67 \\ 
   & 0.50& $\widehat{\theta}_{n}^{(ik,or)}$ & -0.01 & 2.44 & 2.44 & -0.04 & 1.81 & 1.81 & -0.02 & 1.39 & 1.39 \\ 
   & 0.50& $\widehat{\theta}_{n}^{(pl,or)}$ & 0.32 & 2.26 & 2.28 & 0.19 & 1.67 & 1.68 & 0.12 & 1.27 & 1.27 \\ 
   & 0.75 & $\widehat{\theta}_{n}^{(ik,or)}$ & -0.01 & 1.36 & 1.36 & -0.02 & 1.01 & 1.01 & -0.01 & 0.77 & 0.77 \\ 
   & 0.75 & $\widehat{\theta}_{n}^{(pl,or)}$ & -0.04 & 1.23 & 1.23 & -0.03 & 0.91 & 0.91 & -0.01 & 0.69 & 0.69 \\   
   \hline
\hline
\multirow{6}{*}{\large 1} & 0.25 & $\widehat{\theta}_{n}^{(ik)}$ & -0.29 & \bf 4.65 & \bf 4.66 & -0.07 & \bf 2.83 & \bf 2.83 & \bf -0.15 & \bf 1.99 & \bf 2.00 \\ 
   & 0.25 & $\widehat{\theta}_{n}^{(pl)}$ & \bf 0.35 & 4.49 & 4.50 & \bf 0.19 & 2.72 & 2.72 & 0.02 & 1.89 & 1.89 \\ 
   & 0.50& $\widehat{\theta}_{n}^{(ik)}$ & \bf -0.48 & \bf 3.67 & \bf 3.70 & \bf -0.23 & \bf 2.22 & \bf 2.23 & \bf -0.25 & \bf 1.56 & \bf 1.58 \\ 
   & 0.50& $\widehat{\theta}_{n}^{(pl)}$ & 0.00 & 3.40 & 3.40 & -0.05 & 2.08 & 2.08 & -0.13 & 1.44 & 1.44 \\ 
   & 0.75 & $\widehat{\theta}_{n}^{(ik)}$ & -0.78 & \bf 2.17 & \bf 2.30 & -0.53 & \bf 1.27 & \bf 1.38 & -0.47 & \bf 0.88 & \bf 1.00 \\ 
   & 0.75 & $\widehat{\theta}_{n}^{(pl)}$ & \bf -0.94 & 2.02 & 2.23 & \bf -0.64 & 1.19 & 1.35 & \bf -0.52 & 0.81 & 0.96 \\ 
   \hline
\multirow{6}{*}{\large 2} & 0.25 & $\widehat{\theta}_{n}^{(ik)}$ & -0.34 & \bf 4.39 & \bf 4.40 & -0.12 & \bf 2.80 & \bf 2.80 & \bf -0.10 & \bf 1.94 & \bf 1.94 \\ 
   & 0.25 & $\widehat{\theta}_{n}^{(pl)}$ & \bf 0.38 & 4.21 & 4.22 & \bf 0.22 & 2.72 & 2.72 & 0.10 & 1.83 & 1.83 \\ 
   & 0.50& $\widehat{\theta}_{n}^{(ik)}$ & \bf -0.70 & \bf 3.47 & \bf 3.54 & \bf -0.25 & \bf 2.20 & \bf 2.21 & \bf -0.16 & \bf 1.53 & \bf 1.54 \\ 
   & 0.50& $\widehat{\theta}_{n}^{(pl)}$ & -0.20 & 3.21 & 3.22 & -0.01 & 2.06 & 2.06 & -0.01 & 1.40 & 1.40 \\ 
   & 0.75 & $\widehat{\theta}_{n}^{(ik)}$ & -1.54 & \bf 2.25 & 2.73 & -0.59 & \bf 1.31 & \bf 1.43 & -0.34 & \bf 0.86 & \bf 0.93 \\ 
   & 0.75 & $\widehat{\theta}_{n}^{(pl)}$ & \bf -1.80 & 2.14 & \bf 2.80 & \bf -0.71 & 1.23 & 1.43 & \bf -0.39 & 0.79 & 0.88 \\ 
   \hline
\multirow{6}{*}{\large 3} & 0.25 & $\widehat{\theta}_{n}^{(ik)}$ & \bf -0.38 & \bf 4.41 & \bf 4.42 & -0.15 & \bf 2.80 & \bf 2.81 & \bf -0.13 & \bf 1.95 & \bf 1.96 \\ 
   & 0.25 & $\widehat{\theta}_{n}^{(pl)}$ & 0.33 & 4.23 & 4.24 & \bf 0.18 & 2.72 & 2.73 & 0.06 & 1.83 & 1.83 \\ 
   & 0.50& $\widehat{\theta}_{n}^{(ik)}$ & \bf -0.71 & \bf 3.48 & \bf 3.55 & \bf -0.32 & \bf 2.19 & \bf 2.21 & \bf -0.22 & \bf 1.52 & \bf 1.53 \\ 
   & 0.50& $\widehat{\theta}_{n}^{(pl)}$ & -0.21 & 3.20 & 3.21 & -0.08 & 2.05 & 2.06 & -0.07 & 1.39 & 1.39 \\ 
   & 0.75 & $\widehat{\theta}_{n}^{(ik)}$ & -1.45 & \bf 2.19 & 2.63 & -0.70 & \bf 1.29 & \bf 1.46 & -0.43 & \bf 0.87 & \bf 0.97 \\ 
   & 0.75 & $\widehat{\theta}_{n}^{(pl)}$ & \bf -1.70 & 2.07 & \bf 2.67 & \bf -0.81 & 1.21 & 1.46 & \bf -0.48 & 0.79 & 0.93 \\ 
   \hline
\multirow{6}{*}{\large 4} & 0.25 & $\widehat{\theta}_{n}^{(ik)}$ & \bf -0.34 & \bf 4.40 & \bf 4.41 & -0.15 & \bf 2.81 & \bf 2.81 & \bf -0.11 & \bf 1.96 & \bf 1.97 \\ 
   & 0.25 & $\widehat{\theta}_{n}^{(pl)}$ & 0.30 & 4.24 & 4.25 & \bf 0.16 & 2.72 & 2.72 & 0.07 & 1.84 & 1.84 \\ 
   & 0.50& $\widehat{\theta}_{n}^{(ik)}$ & \bf -0.69 & \bf 3.47 & \bf 3.53 & \bf -0.27 & \bf 2.20 & \bf 2.22 & \bf -0.18 & \bf 1.54 & \bf 1.55 \\ 
   & 0.50& $\widehat{\theta}_{n}^{(pl)}$ & -0.26 & 3.23 & 3.24 & -0.09 & 2.07 & 2.07 & -0.07 & 1.41 & 1.41 \\ 
   & 0.75 & $\widehat{\theta}_{n}^{(ik)}$ & -0.82 & \bf 2.19 & 2.34 & -0.35 & \bf 1.29 & \bf 1.34 & -0.22 & \bf 0.87 & \bf 0.89 \\ 
   & 0.75 & $\widehat{\theta}_{n}^{(pl)}$ & \bf -1.14 & 2.08 & \bf 2.37 & \bf -0.52 & 1.23 & 1.33 & \bf -0.31 & 0.81 & 0.86 \\ 
   \hline
\hline
\end{tabular}
\caption{Estimation for normal copula with  normal marginals (100 multiples of bias, SD and RMSE)} 
\label{T.normal.norm}
\end{footnotesize}
\end{table}

\begin{table}[ht]
\centering
\begin{footnotesize}
\begin{tabular}{ccc|rrr|rrr|rrr}
 && &     \multicolumn{3}{c|}{$n=200$}&\multicolumn{3}{c|}{$n=500$}& \multicolumn{3}{c}{$n=1000$}\\
Model &$\tau$& estim& bias & SD &RMSE& bias & SD &RMSE& bias & SD &RMSE \\
 \hline
\hline
\parbox[c]{2mm}{\multirow{6}{*}{\vspace{-2mm}\rotatebox[origin=c]{90}{\footnotesize{Known innovations}}}} & 0.25 & $\widehat{\theta}_{n}^{(ik,or)}$ & -0.28 & 3.53 & 3.53 & 0.00 & 2.68 & 2.68 & 0.07 & 1.99 & 1.99 \\ 
   & 0.25 & $\widehat{\theta}_{n}^{(pl,or)}$ & 0.03 & 3.48 & 3.48 & 0.23 & 2.61 & 2.62 & 0.21 & 1.97 & 1.98 \\ 
   & 0.50& $\widehat{\theta}_{n}^{(ik,or)}$ & -0.19 & 2.81 & 2.82 & -0.01 & 2.14 & 2.14 & 0.05 & 1.60 & 1.60 \\ 
   & 0.50& $\widehat{\theta}_{n}^{(pl,or)}$ & 0.05 & 2.66 & 2.66 & 0.19 & 2.00 & 2.00 & 0.17 & 1.51 & 1.52 \\ 
   & 0.75 & $\widehat{\theta}_{n}^{(ik,or)}$ & -0.10 & 1.62 & 1.62 & -0.01 & 1.23 & 1.23 & 0.02 & 0.93 & 0.93 \\ 
   & 0.75 & $\widehat{\theta}_{n}^{(pl,or)}$ & -0.16 & 1.46 & 1.47 & -0.02 & 1.09 & 1.09 & 0.02 & 0.83 & 0.83 \\
   \hline
\hline
\multirow{6}{*}{\large 1} & 0.25 & $\widehat{\theta}_{n}^{(ik)}$ & \bf -0.25 & 4.93 & 4.93 & \bf -0.18 & 3.30 & 3.30 & \bf -0.11 & \bf 2.28 & \bf 2.28 \\ 
   & 0.25 & $\widehat{\theta}_{n}^{(pl)}$ & 0.24 & \bf 4.96 & \bf 4.96 & 0.08 & \bf 3.32 & \bf 3.32 & 0.00 & 2.27 & 2.27 \\ 
   & 0.50& $\widehat{\theta}_{n}^{(ik)}$ & \bf -0.48 & \bf 3.95 & \bf 3.97 & \bf -0.34 & \bf 2.62 & \bf 2.64 & \bf -0.24 & \bf 1.81 & \bf 1.83 \\ 
   & 0.50& $\widehat{\theta}_{n}^{(pl)}$ & -0.17 & 3.82 & 3.82 & -0.18 & 2.57 & 2.57 & -0.20 & 1.74 & 1.75 \\ 
   & 0.75 & $\widehat{\theta}_{n}^{(ik)}$ & -0.79 & \bf 2.33 & 2.46 & -0.64 & \bf 1.56 & 1.68 & -0.49 & \bf 1.06 & 1.17 \\ 
   & 0.75 & $\widehat{\theta}_{n}^{(pl)}$ & \bf -1.13 & 2.22 & \bf 2.48 & \bf -0.83 & 1.47 & \bf 1.69 & \bf -0.66 & 0.99 & \bf 1.19 \\ 
   \hline
\multirow{6}{*}{\large 2} & 0.25 & $\widehat{\theta}_{n}^{(ik)}$ & \bf -0.61 & \bf 4.99 & \bf 5.03 & \bf -0.20 & \bf 3.23 & \bf 3.24 & 0.02 & \bf 2.22 & \bf 2.22 \\ 
   & 0.25 & $\widehat{\theta}_{n}^{(pl)}$ & -0.21 & 4.98 & 4.98 & 0.03 & 3.18 & 3.18 & \bf 0.15 & 2.19 & 2.20 \\ 
   & 0.50& $\widehat{\theta}_{n}^{(ik)}$ & \bf -0.89 & \bf 4.01 & \bf 4.11 & \bf -0.35 & \bf 2.62 & \bf 2.64 & \bf -0.08 & \bf 1.79 & \bf 1.79 \\ 
   & 0.50& $\widehat{\theta}_{n}^{(pl)}$ & -0.80 & 3.86 & 3.94 & -0.24 & 2.45 & 2.46 & -0.01 & 1.69 & 1.69 \\ 
   & 0.75 & $\widehat{\theta}_{n}^{(ik)}$ & -1.66 & \bf 2.57 & 3.06 & -0.70 & \bf 1.55 & 1.70 & -0.30 & \bf 1.06 & \bf 1.10 \\ 
   & 0.75 & $\widehat{\theta}_{n}^{(pl)}$ & \bf -2.37 & 2.48 & \bf 3.42 & \bf -0.99 & 1.44 & \bf 1.75 & \bf -0.46 & 0.97 & 1.07 \\ 
   \hline
\multirow{6}{*}{\large 3} & 0.25 & $\widehat{\theta}_{n}^{(ik)}$ & \bf -0.59 & \bf 5.01 & \bf 5.05 & \bf -0.24 & \bf 3.22 & \bf 3.23 & -0.01 & \bf 2.23 & \bf 2.23 \\ 
   & 0.25 & $\widehat{\theta}_{n}^{(pl)}$ & -0.21 & 4.97 & 4.98 & -0.01 & 3.18 & 3.18 & \bf 0.12 & 2.20 & 2.20 \\ 
   & 0.50& $\widehat{\theta}_{n}^{(ik)}$ & \bf -0.90 & \bf 4.07 & \bf 4.16 & \bf -0.43 & \bf 2.59 & \bf 2.63 & \bf -0.14 & \bf 1.79 & \bf 1.79 \\ 
   & 0.50& $\widehat{\theta}_{n}^{(pl)}$ & -0.79 & 3.88 & 3.96 & -0.33 & 2.44 & 2.46 & -0.08 & 1.68 & 1.69 \\ 
   & 0.75 & $\widehat{\theta}_{n}^{(ik)}$ & -1.60 & \bf 2.61 & 3.06 & -0.76 & \bf 1.55 & 1.73 & -0.39 & \bf 1.06 & \bf 1.13 \\ 
   & 0.75 & $\widehat{\theta}_{n}^{(pl)}$ & \bf -2.20 & 2.48 & \bf 3.31 & \bf -1.05 & 1.43 & \bf 1.77 & \bf -0.56 & 0.97 & 1.12 \\ 
   \hline
\multirow{6}{*}{\large 4} & 0.25 & $\widehat{\theta}_{n}^{(ik)}$ & \bf -0.63 & \bf 5.03 & \bf 5.07 & \bf -0.23 & \bf 3.26 & \bf 3.27 & 0.00 & \bf 2.22 & \bf 2.22 \\ 
   & 0.25 & $\widehat{\theta}_{n}^{(pl)}$ & -0.28 & 4.97 & 4.97 & -0.01 & 3.22 & 3.22 & \bf 0.12 & 2.19 & 2.20 \\ 
   & 0.50& $\widehat{\theta}_{n}^{(ik)}$ & \bf -0.91 & \bf 4.06 & \bf 4.16 & \bf -0.38 & \bf 2.61 & \bf 2.64 & \bf -0.09 & \bf 1.78 & \bf 1.78 \\ 
   & 0.50& $\widehat{\theta}_{n}^{(pl)}$ & -0.81 & 3.82 & 3.91 & -0.32 & 2.45 & 2.47 & -0.07 & 1.68 & 1.68 \\ 
   & 0.75 & $\widehat{\theta}_{n}^{(ik)}$ & -0.97 & \bf 2.51 & 2.69 & -0.42 & \bf 1.55 & \bf 1.61 & -0.17 & \bf 1.05 & \bf 1.06 \\ 
   & 0.75 & $\widehat{\theta}_{n}^{(pl)}$ & \bf -1.47 & 2.34 & \bf 2.76 & \bf -0.70 & 1.44 & 1.60 & \bf -0.33 & 0.96 & 1.02 \\ 
   \hline
\hline
\end{tabular}
\caption{Estimation for Student copula with  normal marginals (100 multiples of bias, SD and RMSE)} 
\label{T.t.norm}
\end{footnotesize}
\end{table}


The nonparametric estimates $\widehat{m}_j$ and $\widehat{\sigma}_j$ are constructed as local polynomial estimators of order $p=1$ 
with $K$ being the triweight kernel. The bandwidth $h_n$ is chosen for each estimation separately by the cross-validation method 
from the interval $(D,H)$, where $D=\widehat{\sigma}_Z/n^{1/(3+\varepsilon)}$ 
and $H=\widehat{\sigma}_Z \log^2(n)/n^{1/(4-\varepsilon)}$ for $\varepsilon=0.1$ (cf.\ Remark \ref{remark-bandwidth}) 
and $\widehat{\sigma}_Z$ is an estimate of the standard deviation of the explanatory variable $Z$ (being  $X_i$ or $Y_{i-1}$, 
depending on the model) 
given by $\widehat{\sigma}_Z=\min\{S_Z,\mathrm{IQR}_Z/1.34\}$, where $S_Z$ stands for the sample standard deviation 
and $\mathrm{IQR}_Z$ is the interquartile range.

The weights are given by $w_n(z) = \ind\{z \in [c_{n}^{L}, c_{n}^{U}]\}$, where 
$[c_{n}^{L}, c_{n}^{U}]$ is the largest possible interval such that 
$\inf_{z \in [c_{n}^{L}, c_{n}^{U}]} \widehat{f}_{Z}(z) \geq (\widehat{\sigma}_Z\log^2(n))^{-1}$, where $\widehat{f}_Z$ 
is the kernel density estimator of the marginal  density of~$Z$ \citep[with 
triweight kernel and the bandwidth chosen by the standard normal reference rule, see e.g.][p.~201]{fan_yao_book2005}.


For each setting, we compute the estimate of the copula parameter $\theta$ from the true (but unobserved) 
innovations using the inversion of Kendall's tau method ($\widehat{\theta}_{n}^{(ik,or)}$) and the maximum pseudo-likelihood method ($\widehat{\theta}_{n}^{(pl,or)}$). These oracle estimators are compared 
with their counterparts computed from the residuals ($\widehat{\theta}_{n}^{(ik)}$) and ($\widehat{\theta}_{n}^{(pl)}$). To have more comparable results for different copula 
families the estimates of the parameters are done on the Kendall's tau scale. That is 
we are in fact comparing nonparametric estimates of Kendall's tau with parametric 
estimates, where the parameter is estimated with the help of maximum pseudo-likelihood method. 
The performance of the estimators is measured by the bias, standard deviation (SD), and the root mean square error (RMSE),
which are estimated from the $1\,000$ random samples for chosen sample sizes  $n=200$, $500$ and $1000$.   
Since the obtained quantities are of order $10^{-2}$ and smaller, we report $100$ multiples of bias, SD and RMSE in 
Tables~\ref{T.clayton.norm},\ref{T.frank.norm},\ref{T.gumbel.norm},\ref{T.normal.norm} and \ref{T.t.norm}. 
As 
$\widehat{\theta}_{n}^{(ik)}$ and $\widehat{\theta}_{n}^{(pl)}$ are natural competitors, 
the bigger of the two corresponding performance measures (bias, SD, RMSE) 
is stressed by the \textbf{bold font}.  

In agreement with the results of \cite{genest_et_al_1995} and \cite{tsukahara_2005} the results 
for the (oracle) estimates based on (unobserved) innovations are in favour of MPL method. This 
continues to hold also when working with estimated residuals provided that 
 the dependence  is not very strong (i.e.\ $\tau=0.25$ or $\tau = 0.50$). 
  But if the dependence is strong (i.e.\ $\tau=0.75$) then one should consider using the IK method. 
 This seems to be true in particular for the Clayton copula and to some extent also for the Frank copula 
 and the Gumbel copula. A closer inspection of the results reveals that while the standard deviation of 
  MPL method is almost always slightly smaller than the standard deviation of the IK method, the bias 
  can be substantially larger.  On the other hand the results suggest that 
for the normal and the Student copula one can stay with MPL method even in case of a strong dependence.  

Finally note that for large sample sizes the performance of the estimates based on residuals 
is usually almost as good as of the oracle estimates based on (unobserved) innovations. But there is still 
some  price to pay even for the sample size $n=1000$ and this price relatively increases with the 
level of dependence. The question for possible further research is how to explain the  bad performance of PML method  
 based on residuals for the Clayton copula with a strong dependence.

\section{Application}\label{sec: application}

To illustrate the proposed methods let us consider daily log returns of USD/CZK (US Dollar/Czech Koruna) and GBP/CZK (British Pound/Czech Koruna) exchange rates from 4th January 2010 to 31st December 2012. Note that we take only data till the end of 2012 (total of 758 observations for each series), because in November 2013 the Czech National Bank started its intervention aimed at  CZK/EUR exchange rate.

Daily foreign exchange rates have been successfully modelled using  the nonparametric  autoregression, e.g., in \cite{hardle1998nonparametric} and \cite{yang1999nonparametric}.  
Here, we apply a simple model of two separate nonparametric autoregressions of order 1 and search for a feasible copula for the innovations. 
The conditional means  and  variances  are modelled  using local polynomials with degree $p=1$.  
The weights and the smoothing parameters are chosen 
as  in Section~\ref{sec: simul study}. 
The fitted conditional means and standard deviations are plotted together with the data in Figure~\ref{fig1}. It is visible that the conditional mean functions are rather flat and range around zero.

\begin{figure}[tb]
\centering
\includegraphics[width=0.49\textwidth]{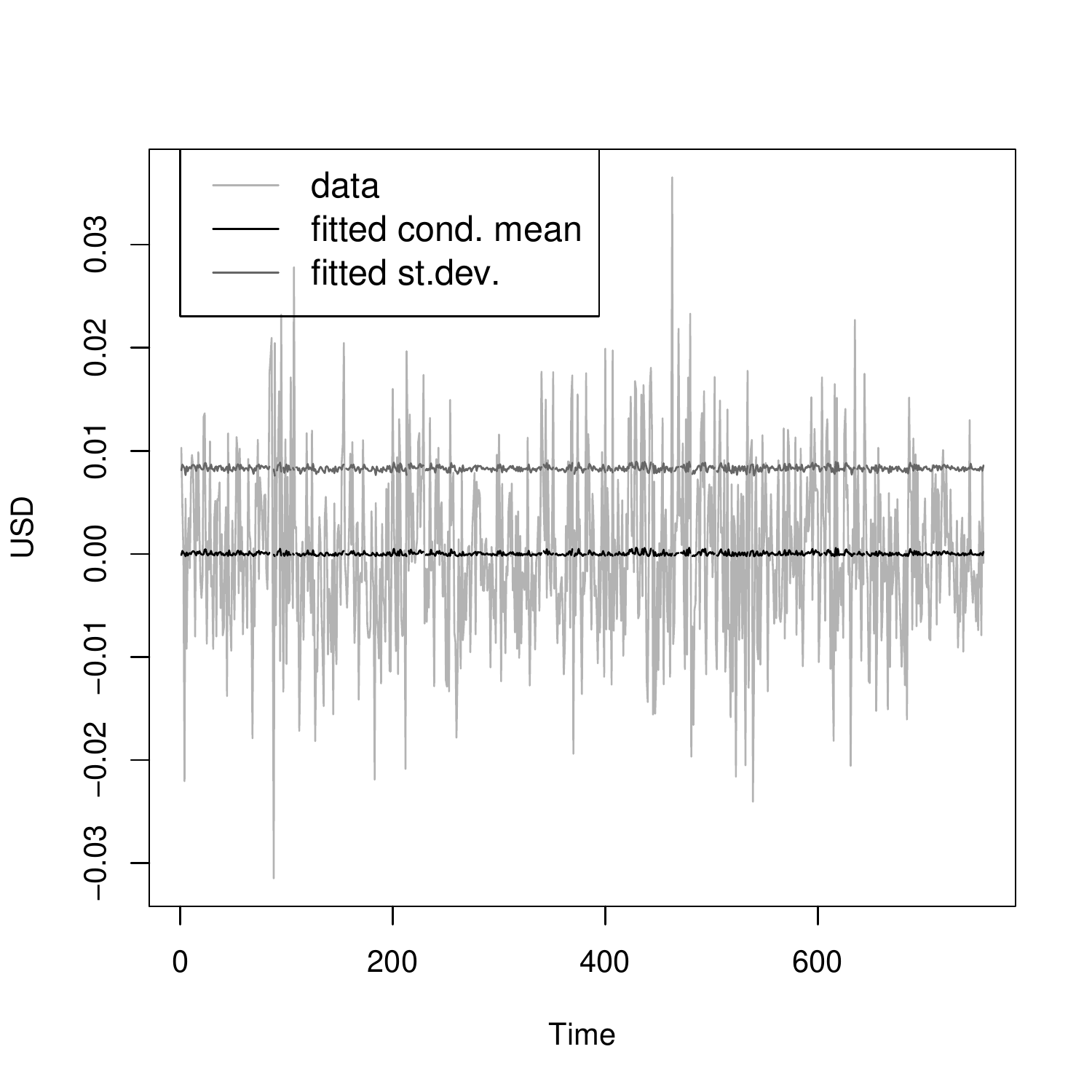}
\includegraphics[width=0.49\textwidth]{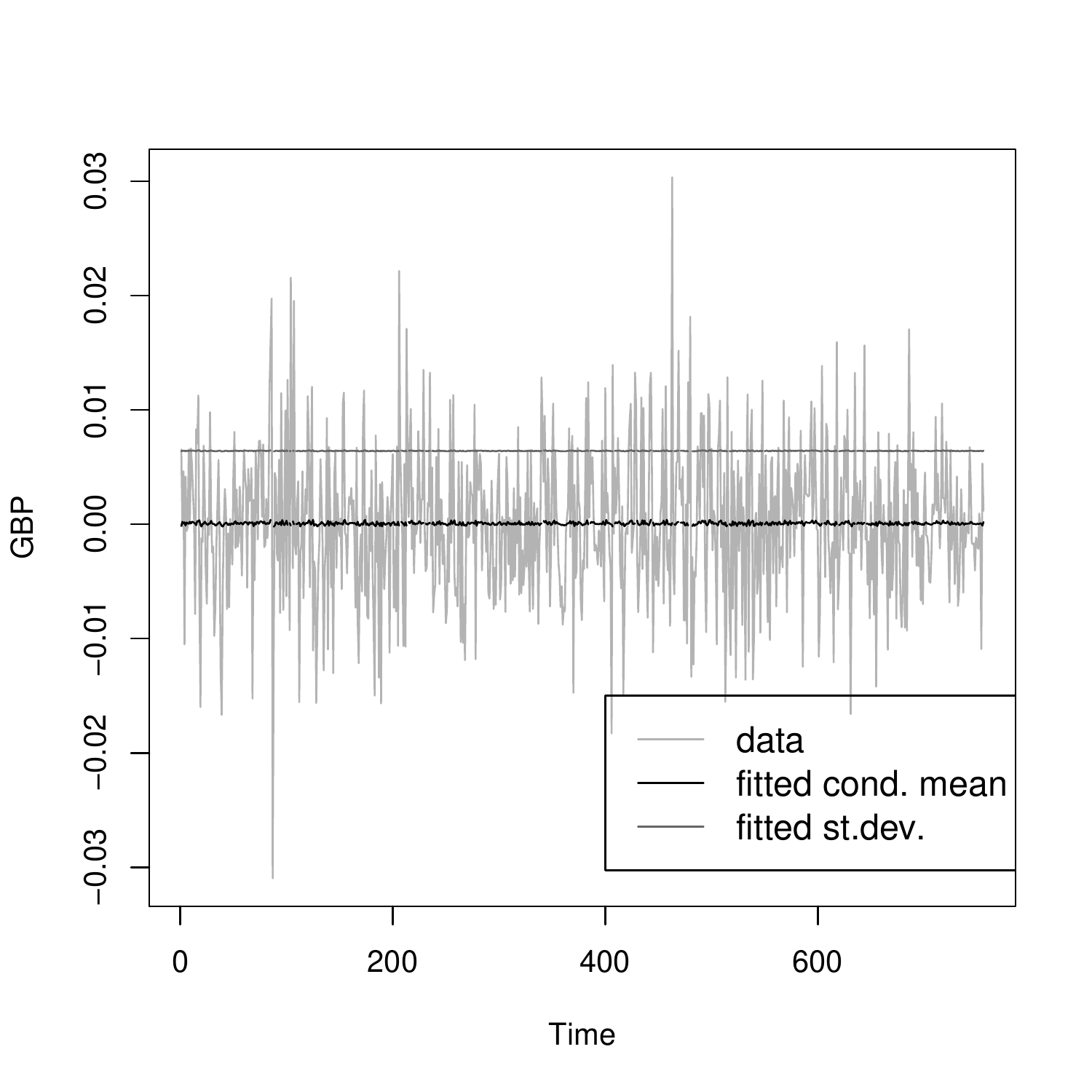}
\vspace{-4mm}
\caption{Fitted conditional mean and variance for the analyzed log returns.}\label{fig1}
\end{figure}

 We use the goodness-of-fit test proposed in Section~\ref{section2.3} in order to decide which copula should be used for modeling the innovations from the two autoregressions. The copula parameter is estimated using the inversion of Kendall's tau method.  The significance  of the test statistics is 
assessed with the help of the bootstrap test based on $B=999$ bootstrap samples. We test Clayton, Frank, Gumbel, normal and Student copula with 4 degrees of freedom respectively and obtain $p$-values $0.000$, $0.000$, $0.001$,  $0.055$, $0.305$.  Hence, we conclude that the Student copula seems to be the best choice for the innovations. The normal copula is also not rejected on the 5\% level, but the corresponding $p$-value is rather borderline, so the Student copula seems to provide a better fit. The maximum pseudo-likelihood method estimates $5.156$ degrees of freedom and parameter $\rho=0.778$.   
Figure~\ref{fig2} shows plot of pseudo-observations 
$(\widetilde{U}_{1i}, \widetilde{U}_{2i})$ given by \eqref{eq: Utilde},  
together with contours of the fitted Student copula. 

\begin{figure}[htb]
\centering
\includegraphics[width=0.5\textwidth]{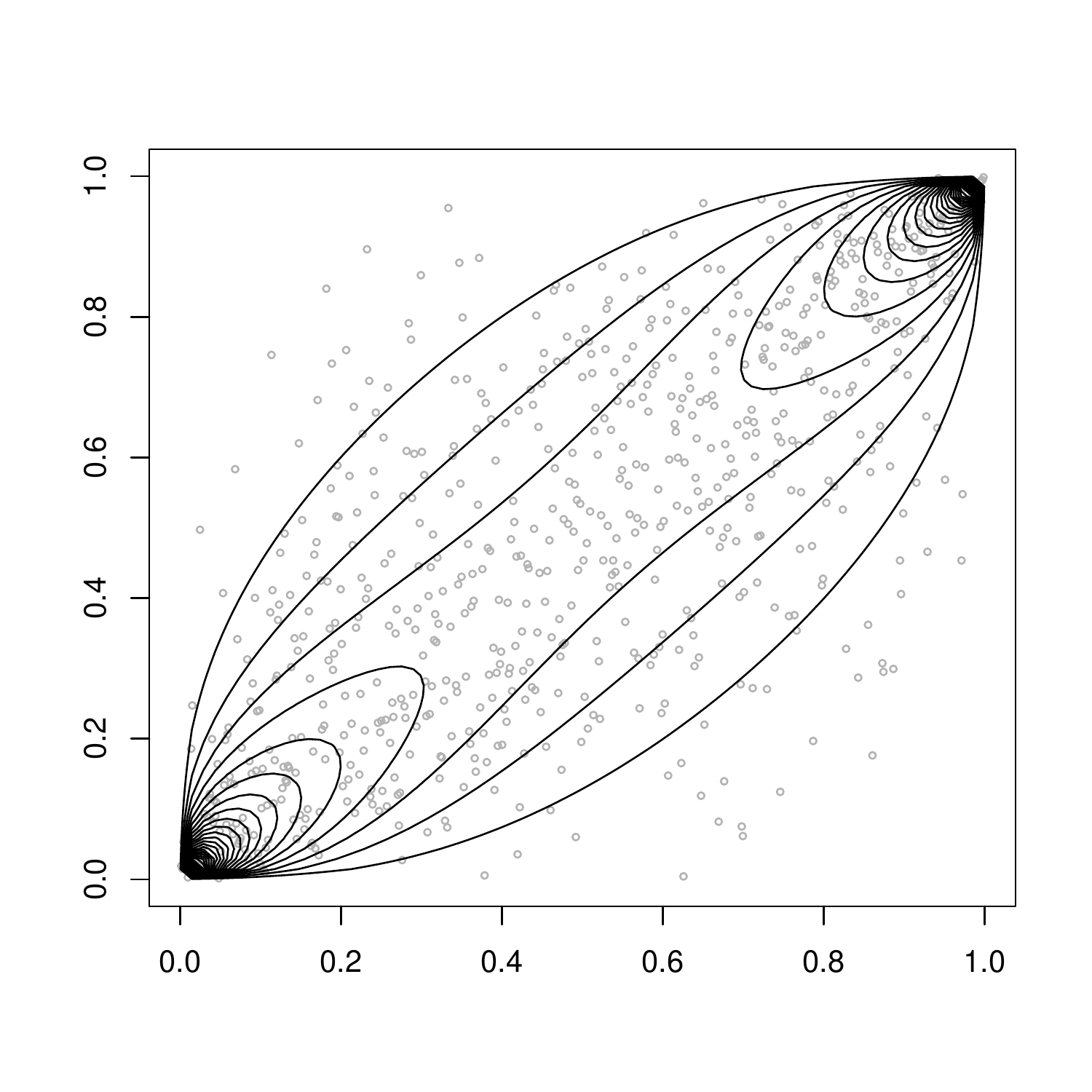} 
\vspace{-4mm} 
\caption{Pseudo-observations 
$(\widetilde{U}_{1i}, \widetilde{U}_{2i})$ given by \eqref{eq: Utilde}    
together with contours of the fitted Student copula (black curves). }\label{fig2}
\end{figure}

\section*{Acknowledgement}
The authors thank the editor, an associate editor and two referees for their very valuable comments that led to a considerable improvement of the contribution. 
The second and the third authors gratefully acknowledge support from the grant GACR 15-04774Y.

\renewcommand{\theequation}{A\arabic{equation}}
\setcounter{equation}{0}
\section*{Appendix A - Proof of Theorem~\ref{thm equiv of Cn}}
%


Recall the definition $W_n=\sum_{j=1}^{n} w_{nj}$. Introduce 
\begin{eqnarray*}
 \hatG_{n}(u_1,u_2) &=&  \frac{1}{W_n} 
  \suman w_{ni} \, \ind \big\{\hateps_{1i} \leq F_{1\eps}^{-1}(u_1),  
 \hateps_{2i} \leq F_{2\eps}^{-1}(u_2) \big\} \\
 &=& \hatF_{\hateps}\big(F_{1\eps}^{-1}(u_1),F_{2\eps}^{-1}(u_2) \big)
\end{eqnarray*}
and note that 
$$
 \widetilde{C}_{n}(u_{1},u_{2}) = \hatG_{n}\big(\hatG_{1n}^{-1}(u_1), 
\hatG_{2n}^{-1}(u_2)
\big),
$$
where $\hatG_{1n}$ and $\hatG_{2n}$ denote the marginals of $\hatG_{n}$. 
Further $\hatG_{n}$ is a distribution function on~$[0,1]^2$ with the 
marginals cdfs satisfying $\hatG_{1n}(0)=\hatG_{2n}(0)=0$. Thus one can 
make use of the Hadamard differentiability of the `copula mapping' 
$\Phi: G \mapsto G(G_{1}^{-1},G_{2}^{-1})$ proved in Theorem~2.4 
of \cite{bucher2013empirical} provided we show that the process 
\begin{equation} \label{eq: Gn}
 \widehat{\mathbb{G}}_{n}(u_1,u_2) = 
  \sqrt{n}\big(\hatG_{n}(u_1,u_2) - C(u_1,u_2)\big),
 \quad (u_1,u_2) \in [0,1]^2,  
\end{equation}
converges in distribution in the space $\ell^{\infty}([0,1]^2)$ 
to a process~$\mathbb{G}$  
with continuous trajectories such that 
$\mathbb{G}(u,0)=\mathbb{G}(0,u)=\mathbb{G}(1,1)=0$ for each $u \in [0,1]$.

%
\subsection*{A1: Decomposition and weak convergence of \texorpdfstring{$\widehat{\mathbb{G}}_{n}$}{Gn}}
%
%
%
Denote 
\begin{align*}
 G_{n}^{(or)}(u_1,u_2) &= 
  \frac{1}{n} \suman  \ind \big\{\eps_{1i} \leq F_{1\eps}^{-1}(u_1),  
 \eps_{2i} \leq F_{2\eps}^{-1}(u_2) \big\},  
\\
 \widetilde{G}_{n}^{(or)}(u_1,u_2) &= 
  \frac{1}{W_n} \suman  w_{ni}\,\ind \big\{\eps_{1i} \leq F_{1\eps}^{-1}(u_1),  
 \eps_{2i} \leq F_{2\eps}^{-1}(u_2) \big\}.  
\end{align*}
%

Now one can decompose the process~$\widehat{\mathbb{G}}_{n}$ as 
%
%
%
\begin{equation} \label{eq: decomposition of Ghat} 
 \widehat{\mathbb{G}}_{n}  = 
\sqrt{n}\, \big(\widehat{G}_{n} - \widetilde{G}_{n}^{(or)} \big) 
+ \sqrt{n}\, \big(\widetilde{G}_{n}^{(or)} - G_{n}^{(or)}\big)  
 + \sqrt{n}\, \big(G_{n}^{(or)} - C \big)  
=  \widetilde{\mathbb{G}}_{n} 
 + \widetilde{\mathbb{G}}_{n}^{(or)} 
 + \mathbb{G}_{n}^{(or)}, 
\end{equation}
where $\widetilde{\mathbb{G}}_{n}$, $\widetilde{\mathbb{G}}_{n}^{(or)}$ 
and $\mathbb{G}_{n}^{(or)}$ stand for the first, second and third 
term respectively on the right-hand side of the first equation 
in~\eqref{eq: decomposition of Ghat}.  
%

In Section~\textbf{A2} it will be shown  that the  first term on the right-hand 
side of~\eqref{eq: decomposition of Ghat} satisfies  
uniformly in $(u_1,u_2) \in [0,1]^2$, 
\begin{align} 
 \notag 
 \widetilde{\mathbb{G}}_{n}(u_1,u_2) 
&=  
 \frac{C^{(1)}(u_1,u_2)\,f_{1\eps}\big(F_{1\eps}^{-1}(u_1)\big)}{\sqrt{n}} \suman \Big[\eps_{1i} 
 + \tfrac{F_{1\eps}^{-1}(u_1)}{2}  (\eps_{1i}^2-1) \Big] 
\\
\label{eq: as repr of Gntilde}
&
 \quad +  \frac{C^{(2)}(u_1,u_2)\,f_{2\eps}\big(F_{2\eps}^{-1}(u_2)\big)}{\sqrt{n}} \suman \Big[\eps_{2i} 
 + \tfrac{F_{2\eps}^{-1}(u_2)}{2}  (\eps_{2i}^2-1) \Big] 
 + o_{P}(1) 
\end{align} 
where (in agreement with the last two conditions in $\mathbf{(F_{\beps})}$)
for $u_{1} \in \{0,1\}$ 
the first term on the right-hand side of~\eqref{eq: as repr of Gntilde} 
is defined as zero and analogously for $u_{2} \in \{0,1\}$. 

In Section~\textbf{A3}, we will show the asymptotic negligibility of the second term on the 
right-hand side of~\eqref{eq: decomposition of Ghat}, i.e.   
\begin{equation} \label{eq: as negl of tildeGnOR minus GnOR}
  \sup_{(u_1,u_2) \in [0,1]^2}
 \Big| \widetilde{\mathbb{G}}_{n}^{(or)}(u_1,u_2) \Big|
 = \sup_{(u_1,u_2) \in [0,1]^2}
 \Big|\sqrt{n}\, \big(\widetilde{G}_{n}^{(or)}(u_1,u_2) - G_{n}^{(or)}(u_1,u_2)\big)\Big|  
  = o_{P}(1). 
\end{equation}
Now combining \eqref{eq: decomposition of Ghat} with 
\eqref{eq: as repr of Gntilde} and \eqref{eq: as negl of tildeGnOR minus GnOR} 
yields that uniformly in $(u_1,u_2)$ 
\begin{equation} \label{eq: as repr for hatGn}
 \widehat{\mathbb{G}}_{n}(u_1,u_2) = A_{n}(u_1,u_2) + B_{n}(u_1,u_2) 
  + o_{P}(1),  
\end{equation}
where 
\begin{align}
 \label{eq: An}
  A_{n}(u_1, u_2) &= \frac{1}{\sqrt{n}} \suman \big[\ind\{U_{1i} \leq u_1, U_{2i} \leq u_2 \} - C(u_1, u_2) \big], 
 \\
\notag 
  B_{n}(u_1,u_2) &=  \frac{1}{\sqrt{n}} \suman \sum_{j=1}^{2} C^{(j)}(u_1,u_2) \, f_{j\eps}\big(F_{j\eps}^{-1}(u_j)\big) \Big[\eps_{ji} 
 + \tfrac{F_{j\eps}^{-1}(u_j)}{2}  (\eps_{ji}^2-1) \Big].   
\end{align}
The asymptotic representation~\eqref{eq: as repr for hatGn} together with standard techniques yields the weak convergence 
of the process $\widehat{\mathbb{G}}_{n}$. 

\bigskip

\noindent Now thanks to Hadamard differentiability 
of the copula functional and Theorem~3.9.4 of \cite{vaart_wellner},
\begin{align}
\notag 
 \sqrt{n}\,&\big[\widetilde{C}_{n}(u_1,u_2) - C(u_1,u_2) \big] 
  = \sqrt{n}\big[\hatG_{n}\big(\hatG_{1n}^{-1}(u_1), 
 \hatG_{2n}^{-1}(u_2)\big) - C(u_1,u_2) \big]
\\
\label{eq: as repr of Ctilde after HD}
 & = \widehat{\mathbb{G}}_{n}(u_1,u_2) 
  - C^{(1)}(u_1,u_2) \widehat{\mathbb{G}}_{n}(u_1,1) 
  - C^{(2)}(u_1,u_2) \widehat{\mathbb{G}}_{n}(1,u_2) + o_{P}(1). 
\end{align}
Note that for each $(u_1,u_2) \in [0,1]^2$ 
\begin{equation} \label{eq: Bn diminishes}
 B_{n}(u_1,u_2) - C^{(1)}(u_1,u_2) B_{n}(u_1,1) - C^{(2)}(u_1,u_2) B_{n}(1,u_2) = 0.   
\end{equation} 
Further combining \eqref{eq: as repr of Ctilde after HD} with 
\eqref{eq: as repr for hatGn}, \eqref{eq: An} and \eqref{eq: Bn diminishes} gives 
\begin{align}
\notag 
 \sqrt{n}\,&\big[\widetilde{C}_{n}(u_1,u_2) - C(u_1,u_2) \big] 
\\
\label{as: final as repr for hatGn}
 & = A_{n}(u_1,u_2) 
  - C^{(1)}(u_1,u_2) A_{n}(u_1,1) 
  - C^{(2)}(u_1,u_2) A_{n}(1,u_2) + o_{P}(1). 
\end{align}
Now the right-hand side of~\eqref{as: final as repr for hatGn} coincides with 
the asymptotic representation of the `oracle' copula process  
$ \sqrt{n} \big[C_n^{(or)} - C\big]$, which implies the statement of Theorem~\ref{thm equiv of Cn}.


\subsection*{A2: Showing \texorpdfstring{\eqref{eq: as repr of Gntilde}}{(A3)}}
Let us introduce the process 
$$
 Z_{n}(f) = \frac{1}{\sqrt{n}} \suman f(\XX_{i}, \eps_{1i}, \eps_{2i}), 
$$ 
that is indexed be the following set of functions
\begin{multline*} 
 \mathcal{F} = \Big\{ (\xx,y_1,y_2) \mapsto
  \ind\big\{\xx \in [-c,c]^d \big\}\,\ind\big\{ y_1 \leq z_{1}\,b_{1}(\xx)+a_{1}(\xx),
   y_2 \leq z_{2}\,b_{2}(\xx)+a_{2}(\xx) \big\},
\\ 
  c \in \RR^+, z_{1},z_{2} \in \RR,\, 
a_1, a_2\in \mathcal{G}, b_1,b_2 \in\widetilde{\mathcal{G}}
 \Big\},
\end{multline*}
where
\begin{eqnarray}
\label{G}
\mathcal{G}&=&\big\{ a:\RR^d\to\RR\mid a\in C_1^{d+\delta}\big(\RR^d\big), 
 \, \sup_{\xx} \|\xx\|^{\nu}\, |a(\xx)|\leq 1 \big\},
\\
\label{tilde-G}
\widetilde{\mathcal{G}}&=&\big\{ b:\RR^d\to\RR\mid 
  b\in \widetilde{C}_2^{d+\delta}\big(\RR^d\big), 
\sup_{\xx} \|\xx\|^{\nu}\,|b(\xx)-1|\leq 1 \big\} 
\end{eqnarray}
and for $\delta$ from assumption $\mathbf{(Bw)}$ and some $\nu$ large enough such that 
\begin{equation}\label{nu}
 \frac{b}{b-1}\Big(\frac{d}{\nu}+\frac{d}{d+\delta}\Big)<1. 
\end{equation}

Denote the centred process as  
\begin{equation} \label{eq: bar Zn}
 \bar{Z}_{n}(f) = Z_{n}(f) - \Es Z_{n}(f), \qquad f \in \mathcal{F}, 
\end{equation} 
and note that $f$ may be formally identified by $(c, z_1,z_2,a_1,b_1,a_2,b_2)$. 
We will use the notation $f\hat=(c, z_1,z_2,a_1,b_1,a_2,b_2)$. 
Further in agreement with the notation used in \cite{vaart_wellner_2007} 
by $\bar{Z}_{n}(f_{n})$ for random~$f_{n}$ we understand 
the value of the mapping $f \mapsto \bar{Z}_{n}(f)$ evaluated at $f_{n}$.

Consider the semi-norm given by  
\begin{equation*} 
 \|f\|_{2,\beta}^{2} = \int_{0}^{1} \beta^{-1}(u)\,Q_{f}^{2}(u)\,du, 
\end{equation*}
where 
$$
 \beta^{-1}(u) = \inf\big\{x>0 :\beta_{\lfloor x\rfloor}\leq u\}, \quad  
 Q_{f}(u) = \inf\big\{x>0: \PP\big(\big|f(\eps_{11},\eps_{21},\XX_{1})\big| > x \big) \leq u \big\}. 
$$
%
From assumption $\mathbf{(\boldsymbol{\beta})}$ one obtains that $\beta^{-1}(u)\leq cu^{-1/b}$ for some constant $c$. 
Further denote 
\[
 P|f-g| = \E \big|f(\XX_{1}, \eps_{11}, \eps_{12}) - g(\XX_{1}, \eps_{11}, \eps_{12})\big|
 = \PP\Big(\big|f(\XX_{1}, \eps_{11}, \eps_{12}) - g(\XX_{1}, \eps_{11}, \eps_{12})\big| > 0\Big).  
\]
As $\mathcal{F}$ consists of indicator functions  
for $f,g\in\mathcal{F}$ one has $Q_{f-g}(u)=\ind\{0<u<P|f-g|\}$.  Thus one obtains for $\epsilon<1$
\begin{equation} \label{eq: norm 2beta}
 \|f-g\|_{2,\beta}^{2} \leq c \int_{0}^{P|f-g|} u^{-1/b}\,du 
  = \frac{cb}{b-1}\big(P|f-g|\big)^{1-1/b}.  
\end{equation}
Starting with brackets of $\|\cdot\|_{2}$-length $\epsilon^{2b/(b-1)}$ of the function classes $\mathcal{G}$, $\widetilde{\mathcal{G}}$ and $\{ \xx \mapsto
  \ind\{\xx \in [-c,c]^d \}\mid c \in \RR^+\}$ it is then easy to construct brackets for $\mathcal{F}$ with $\|\cdot\|_{2,\beta}$-length $\epsilon$ 
\citep[compare with the proof of Lemma~1 in][]{dette2009goodness}. 
Thus one obtains 
\begin{align} 
\notag 
\log\Big(N_{[\,]}(\epsilon,\mathcal{F}, \|\cdot\|_{2,\beta})\Big) &\leq
  \log\Big(O(\epsilon^{-2db/(b-1)})N_{[\,]}\left(\epsilon^{2b/(b-1)},\mathcal{G},\|\cdot\|_2\right)N_{[\,]}\left(\epsilon^{2b/(b-1)},\widetilde{\mathcal{G}},\|\cdot\|_2\right)\Big)
\\
&\leq O\big(\log(\epsilon) \big) + O\Big(\epsilon^{-2\frac{b}{b-1}\big(\frac{d}{\nu}+
\frac{d}{d+\delta}\big)}\Big),
\label{eq: bound on brack number of F}
\end{align}
 where the rate follows from Lemma \ref{lem-covering-functionclass} in Appendix C.
Further one bracket is sufficient for $\epsilon\geq 1$. Thus by \eqref{eq: bound on brack number of F} 
and (\ref{nu}),
$$
 \int_{0}^{\infty} \sqrt{\log N_{[\,]}(\epsilon,\mathcal{F}, \|\cdot\|_{2,\beta})}\,d\epsilon<\infty. 
$$
%
From \cite{dedecker_louhichi}, Section 4.3, it follows that the centred process $\bar Z_{n}$ 
given by~\eqref{eq: bar Zn} is 
asymptotically $\|.\|_{2,\beta}$-equicontinuous.

To apply this result in order to prove \eqref{eq: as repr of Gntilde} note that 
\begin{eqnarray*}
 \widetilde {\mathbb{G}}_{n}(u_1,u_2) &=&
 \frac{\sqrt{n}}{W_{n}} \sum_{i=1}^n  w_{ni}\,\Big(\ind \Big\{\eps_{1i} \leq F_{1\eps}^{-1}(u_1)\tfrac{\widehat{\sigma}_1(\Xb_i)}{\sigma_1(\Xb_i)} + \tfrac{\widehat{m}_1(\Xb_i)-m_1(\Xb_i)}{\sigma_1(\Xb_i)},  \\
 &&\qquad\qquad\qquad \quad \eps_{2i} \leq F_{2\eps}^{-1}(u_2)\tfrac{\widehat{\sigma}_2(\Xb_i)}{\sigma_2(\Xb_i)} + \tfrac{\widehat{m}_2(\Xb_i)-m_2(\Xb_i)}{\sigma_2(\Xb_i)} \Big\}
\\
&&\qquad\qquad\qquad -\ind \big\{\eps_{1i} \leq F_{1\eps}^{-1}(u_1),\eps_{2i} \leq F_{2\eps}^{-1}(u_2)\big\}\Big) 
 \end{eqnarray*}
 and introduce the process
\begin{align*}
 \check{\mathbb{G}}_{n}(u_1,u_2) &= 
   \frac{1}{\sqrt{n}} \sum_{i=1}^n  w_{ni}\,\Big(\ind \Big\{\eps_{1i} \leq F_{1\eps}^{-1}(u_1)\hatb_1(\Xb_i)+\hata_1(\Xb_i), 
\eps_{2i} \leq F_{2\eps}^{-1}(u_2)\hatb_2(\Xb_i)+\hata_2(\Xb_i) \Big\}\\
&\qquad\qquad\qquad\quad  -\ind \Big\{\eps_{1i} \leq F_{1\eps}^{-1}(u_1),\eps_{2i} \leq F_{2\eps}^{-1}(u_2)\Big\}\Big) 
\end{align*}
 with $\hata_j,\hatb_j$, $j=1,2$, from Lemma~\ref{lemma hatm and hatsigma} 
and Remark~\ref{rem: extending a and b} in Appendix C. 
 Then one obtains by monotonicity arguments applying Lemma  \ref{lemma hatm and hatsigma}(i) that, on an event with probability converging to one
\begin{eqnarray*}
Z_n\big(f_n^\ell\big)-Z_n\big(g_n) \leq 
\big(\tfrac{W_n}{n}\,\widetilde{\mathbb{G}}_{n}(u_1,u_2)- \check{\mathbb{G}}_{n}(u_1,u_2)\big)\leq  Z_n\big(f_n^u\big)-Z_n\big(g_n)
 \end{eqnarray*}
 for some deterministic positive sequence $\gamma_n=o(n^{-1/2})$.
 Here,
\begin{align*}
 f_{n}^\ell &\hat= \big(c_n, F_{1\eps}^{-1}(u_1), F_{2\eps}^{-1}(u_2),\hata_1-\gamma_n, \hatb_1-\gamma_n\sign( F_{1\eps}^{-1}(u_1)), \hata_2-\gamma_n,\hatb_2-\gamma_n\sign( F_{2\eps}^{-1}(u_2))\big),
\\
 f_{n}^u &\hat= \big(c_n, F_{1\eps}^{-1}(u_1), F_{2\eps}^{-1}(u_2),\hata_1+\gamma_n, 
 \hatb_1+\gamma_n\sign( F_{1\eps}^{-1}(u_1)), \hata_2+\gamma_n,\hatb_2+\gamma_n\sign( F_{2\eps}^{-1}(u_2))\big),
\\
g_{n} &\hat= \big(c_n, F_{1\eps}^{-1}(u_1), F_{2\eps}^{-1}(u_2),\hata_1,\hatb_1,\hata_2,\hatb_2\big).  
\end{align*}
 We only consider the upper bound, the lower one can be handled completely analogously.  
 First note that $Z_n\big(f_n^u\big)-Z_n\big(g_n)=\bar Z_n\big(f_n^u\big)-\bar Z_n\big(g_n) + R_n$, where with probability converging to one,
 \begin{equation}\label{expectation Z_n}
 \qquad |R_n|\leq 2\,\sqrt{n}\,\max_{j=1,2}\sup_{u\in\mathbb{R}, s\in \{-1,1\}\atop v\in(1/2,1), w \in (-1,1)}\big|F_{j\eps}\big(uv+w\big)-F_{j\eps}\big(u(v+s\gamma_n)+w+\gamma_n \big)\big|
  =o(1),
\end{equation} 
where the last equality follows by a Taylor expansion, assumption $\mathbf{(F_{\beps})}$ and $\gamma_n=o(n^{-1/2})$.
Now introducing the notation ($j=1,2$)
 \begin{equation}\label{eq: Fjhat_inv xx} 
  F_{j\eps}^{-1}(u,\xx,\gamma) = 
 F_{j\eps}^{-1}(u)\big[\hatb_j(\xx)+\gamma\sign\big(F_{j\eps}^{-1}(u)\big)\big] + \hata_j(\xx)+\gamma
\end{equation}
one can show as in (\ref{eq: norm 2beta}) that for a sufficiently large~$M$  
%
\begin{align}
\notag 
  \|f_n^u - g_{n}\|_{2,\beta} &\leq 
  M\, 
  \PP\Big(\Big| 
   \ind\Big\{\XX_{1} \in \JJ_{n}, \,\eps_{11} \leq   F_{1\eps}^{-1}(u_1,\XX_{1},\gamma_n), 
   \eps_{21} \leq  F_{2\eps}^{-1}(u_2,\XX_{1},\gamma_n) 
    \Big\} 
\\*
\notag 
& \qquad \quad \ 
   - \ind\Big\{\XX_{1} \in \JJ_{n}, \,\eps_{11} \leq   F_{1\eps}^{-1}(u_1,\XX_1,0), 
   \eps_{21} \leq   F_{2\eps}^{-1}(u_2,\XX_1,0)
    \Big\}
  \Big| > 0
 \Big)^{1-1/b}  
\\
\notag 
& \leq M\, 
  \PP\Big(
\XX_{1} \in \JJ_n, 
    F_{1\eps}^{-1}(u_1,\XX_1,0) \leq \eps_{11} \leq   F_{1\eps}^{-1}(u_1,\XX_{1},\gamma_n)\Big)^{1-1/b}  
\\
\notag
    & \qquad + M\, 
  \PP\Big(
\XX_{1} \in \JJ_n, 
    F_{2\eps}^{-1}(u_2,\XX_1,0) \leq \eps_{21} \leq  F_{2\eps}^{-1}(u_2,\XX_{1},\gamma_n)\Big)^{1-1/b}  
%
\end{align}
and this can be bounded by $Mn^{-(1-1/b)/2}$ times the bound on the right hand side of (\ref{expectation Z_n}) and thus converges to zero in probability
uniformly in~$u_1,u_2$. 
Therefore there exists a deterministic sequence $\delta_n\searrow 0$ with 
$\PP\big(\sup_{u_1,u_2} \|f_n^u - g_{n}\|_{2,\beta}\leq\delta_n\big)\to 1$ as $n\to\infty$. 
Further by Lemma~\ref{lemma hatm and hatsigma} 
and Remark~\ref{rem: extending a and b} of Appendix~C  one has 
$\PP\big(f_{n}^{u}$, $f_{n}^{l}$, $g_{n} \in \mathcal{F}\big) \to 1$ as $n\to\infty$.  
Now from $\|.\|_{2,\beta}$-equicontinuity of $\bar Z_n$ one obtains for every $\epsilon>0$ that
\begin{eqnarray*}
 \PP\bigg(\sup_{u_1,u_2}\big|\bar Z_n(f_n^u)-\bar Z_n(g_n) \big|>\epsilon\bigg)
 &\leq& \PP\bigg(\sup_{f,g\in\mathcal{F}\atop  \|f-g\|_{2,\beta}\leq\delta_n }\big|\bar Z_n(f)-\bar Z_n(g) \big|>\epsilon\bigg) +o(1) \;=\; o(1)
\end{eqnarray*}
and thus 
$  \big|\bar Z_n(f_n^u)-\bar Z_n(g_n) \big| = o_P(1)$
 uniformly with respect to $u_1,u_2$. In combination with (\ref{expectation Z_n}), 
analogous considerations for the lower bound $Z_n\big(f_n^\ell\big)-Z_n\big(g_n)$ and 
the fact that 
\begin{equation} \label{eq: Wn divided n}
  \frac{W_{n}}{n} = 1 + o_{P}(1), 
\end{equation}
we obtain
\begin{equation*} 
 \sup_{u_1,u_2}\,\big|\tfrac{W_n}{n}\,\widetilde{\mathbb{G}}_{n}(u_1,u_2)- \check{\mathbb{G}}_{n}(u_1,u_2)\big| 
  = o_{P}(1).  
\end{equation*}
Further thanks to  \eqref{eq: Wn divided n} it is sufficient to show that 
the process $\check{\mathbb{G}}_{n}(u_1,u_2)$ has the asymptotic 
representation given by the right-hand side of~\eqref{eq: as repr of Gntilde}. 

Thus the remaining proof of~\eqref{eq: as repr of Gntilde} is divided into two parts. 
First we prove that 
\begin{equation} \label{eq: as neglib of centred tilde Gn}
 \sup_{u_1,u_2}\,\big|\check {\mathbb{G}}_n(u_1,u_2) - \Es\!^\ast[ \check {\mathbb{G}}_n(u_1,u_2)]\big| 
  = o_{P}(1)   
\end{equation}
and then we calculate $\Es\!^\ast[ \check {\mathbb{G}}_n(u_1,u_2)]$. Here, with slight abuse of  notation, $\Es\!^\ast$ denotes expectation, considering the functions $\hata_j$, $\hatb_j$ as deterministic.

\subsubsection*{Showing~\eqref{eq: as neglib of centred tilde Gn}}
Note that we have
\begin{equation} \label{eq: bar An through bar Zn}
 \check {\mathbb{G}}_n(u_1,u_2) - \E^{\ast}[ \check {\mathbb{G}}_n(u_1,u_2)] = \bar Z_{n}(f_{n}) - \bar Z_{n}(g_{n}), 
\end{equation}
where
\begin{equation*}
 f_{n} \hat{=} \big(c_n, F_{1\eps}^{-1}(u_1), F_{2\eps}^{-1}(u_2), \hata_1,\hatb_1,\hata_2,\hatb_2\big), \quad  
g_{n} \hat{=} \big(c_{n}, F_{1\eps}^{-1}(u_1),F_{2\eps}^{-1}(u_2), 0, 
   1, 0, 1\big),   
\end{equation*}
%
with $0$ and $1$ standing for functions that are constantly equal to 
zero and one respectively. 
Similarly to before
one can show that for a sufficiently large~$M$ 
%
\begin{align*}
  \|f_n - g_{n}\|_{2,\beta}  
  & \leq M \Big( \E \Big[\Big|F_{1\eps}\big(F_{1\eps}^{-1}(u_1,\XX_{1},0)\big) 
             - u_1\Big|\, \ind\big\{\XX_{1} \in \JJ_n \big\}\Big]
\\
  & \qquad + \E \Big[\Big|F_{2\eps}\big(F_{2\eps}^{-1}(u_2,\XX_{1},0)\big)   
            - u_2\Big|\,\ind\big\{\XX_{1} \in \JJ_n \big\}\Big]\Big)^{1-1/b}
\end{align*}
using notation (\ref{eq: Fjhat_inv xx}).
%

%
%
%
Now note that with Lemma~\ref{lemma hatm and hatsigma}\,(iii) in Appendix C
 we obtain $  \|f_n - g_{n}\|_{2,\beta} = o_{P}(1)$ 
uniformly in~$u_1,u_2$. 
Finally with the help of \eqref{eq: Wn divided n}, \eqref{eq: bar An through bar Zn} and 
the asymptotic $\|.\|_{2,\beta}$-equicontinuity of the process $\bar Z_{n}$ one can 
conclude (\ref{eq: as neglib of centred tilde Gn}).   

\subsubsection*{Calculating $\Es\!^\ast[ \check {\mathbb{G}}_n(u_1,u_2)]$}  
To simplify the notation and to prevent the confusion let the random vector $\Xb$ have the same distribution 
as $\Xb_{1}$. With the help of a second-order Taylor series expansion of the right-hand side 
one gets 
\begin{align}
\notag 
 \Es\!^\ast[ \check {\mathbb{G}}_n(u_1,u_2)] &=
\sqrt{n}\,\Es\!^\ast \big[ w_{n}(\XX)\big\{F_{\eps}\big(F_{1\eps}^{-1}(u_{1},\XX,0), 
  F_{2\eps}^{-1}(u_{2},\XX,0)\big) 
 - F_{\eps}\big(F_{1\eps}^{-1}(u_{1}), 
  F_{2\eps}^{-1}(u_{2})\big)\big\}\big] 
%
 \\
\label{eq: Taylor for En nonparam}
&=
\sqrt{n}\,\sum_{j=1}^{2} \Es\!^\ast\big\{w_{n}(\XX) 
 \,F_{\eps}^{(j)}\big(F_{1\eps}^{-1}(u_{1}), 
  F_{2\eps}^{-1}(u_{2})\big)  
 Y_{j\XX}(u_{j}) \big\}
%
\\
\notag   
& \quad + \tfrac{\sqrt{n}}{2}\,\sum_{j=1}^{2}\sum_{k=1}^{2} \Es\!^\ast\big\{w_{n}(\XX) 
 \,F_{\eps}^{(j,k)}\big(F_{1\eps}^{-1}(u_{1\XX}), 
  F_{2\eps}^{-1}(u_{2\XX})\big)  
 Y_{j\XX}(u_{j})\,Y_{k\XX}(u_{k}) \big\},  
%
\end{align}
%
%
where 
\begin{equation*}
 Y_{j\xx}(u) = F_{j\eps}^{-1}(u,\xx,0) - F_{j\eps}^{-1}(u)
  =  \hata_j(\xx) 
 + F_{j\eps}^{-1}(u)(\hatb_j(\xx)-1),  
 \quad j=1,2,
\end{equation*}
and the point $u_{j\xx}$ lies between the points
$F_{j\eps}\big(F_{j\eps}^{-1}(u_{j},\xx,0)\big)$ and $u_j$.
Now using Lemma~\ref{lemma hatm and hatsigma}\,(iv) in Appendix C for $j=1,2$ 
\begin{align*}
& \sqrt{n}\,  \Es\!^\ast\big\{w_{n}(\XX) 
 \,F_{\eps}^{(j)}\big(F_{1\eps}^{-1}(u_{1}), 
  F_{2\eps}^{-1}(u_{2})\big)  
 Y_{j\XX}(u_{j}) \big\}
 \\
%
& = \sqrt{n}\,F_{\eps}^{(j)}\big(F_{1\eps}^{-1}(u_{1}), 
  F_{2\eps}^{-1}(u_{2})\big)\!\Big[
  \Es\!^\ast\big[ \hata_j(\XX)\ind\{\XX\in\JJ_{n}\}\big]
  \!+\! F_{j\eps}^{-1}(u_{j}) 
   \Es\!^\ast\big[(\hatb_j(\XX)-1)\ind\{\XX\in\JJ_{n}\}
   \big]
 \Big]
\\
& = F_{\eps}^{(j)}\big(F_{1\eps}^{-1}(u_{1}), 
  F_{2\eps}^{-1}(u_{2})\big)\,\frac{1}{\sqrt{n}}\suman \Big[ \eps_{ji} 
  + \tfrac{F_{j\eps}^{-1}(u_{j})}{2}\,(\eps_{ji}^2-1)    \Big] + o_{P}(1)
\\
& = \frac{C^{(j)}(u_{1},u_{2})\,f_{j\eps}\big(F_{j\eps}^{-1}(u_{j})\big)}
   {\sqrt{n}}\suman \Big[ \eps_{ji} 
  + \tfrac{F_{j\eps}^{-1}(u_{j})}{2}\,(\eps_{ji}^2-1)    \Big] + o_{P}(1)
\end{align*}
uniformly in $(u_1,u_2)$. 
\smallskip

To conclude the proof of~\eqref{eq: as repr of Gntilde} we need to show that `the second order terms' 
in~\eqref{eq: Taylor for En nonparam} are asymptotically negligible. 
To show that note that by assumption~$\mathbf{(F_{\beps})}$ 
and Lemma~\ref{lemma hatm and hatsigma}\,(iii) there 
exists a finite constant $M$ such that with probability going to one 
\allowdisplaybreaks{
\begin{align*}
\notag 
\Big|F_{\eps}^{(j,k)}&\big(F_{1\eps}^{-1}(u_{1\xx}), 
  F_{2\eps}^{-1}(u_{2\xx})\big)\Big| \,
\big(1 + \big|F_{j\eps}^{-1}(u_j)\big| \big)  
\big(1 + \big|F_{k\eps}^{-1}(u_k)\big| \big)  
\nobreak 
 \\
\notag 
&= \Big|F_{\eps}^{(j,k)}\big(F_{1\eps}^{-1}(u_{1\xx}), 
  F_{2\eps}^{-1}(u_{2\xx})\big)\Big| 
\big(1 + \big|F_{j\eps}^{-1}(u_{j\xx})\big| \big)  
\big(1 + \big|F_{k\eps}^{-1}(u_{k\xx})\big| \big) 
\nobreak  
\\
\nobreak 
\notag 
& \qquad \qquad \frac{\big(1 + \big|F_{j\eps}^{-1}(u_j)\big| \big)  
\big(1 + \big|F_{k\eps}^{-1}(u_k)\big| \big)
}{\big(1 + \big|F_{j\eps}^{-1}(u_{j\xx})\big| \big)  
\big(1 + \big|F_{k\eps}^{-1}(u_{k\xx})\big| \big) } 
 \\
\notag 
& \leq M\, 
\frac{\big(1 + \big|F_{j\eps}^{-1}(u_{j})\big| \big)  
\big(1 + \big|F_{k\eps}^{-1}(u_{k})\big| \big) 
}{\big(1 + \big|F_{j\eps}^{-1}(u_{j\xx})\big| \big)  
\big(1 + \big|F_{k\eps}^{-1}(u_{k\xx})\big| \big) }  
\\
\notag 
& \leq 
\frac{M\,\big(1 + \big|F_{j\eps}^{-1}(u_{j})\big| \big)  
\big(1 + \big|F_{k\eps}^{-1}(u_{k})\big| \big) 
}{\big(1 + \big|F_{j\eps}^{-1}(u_{j})\big(1+o_{P}(n^{-\frac{1}{4}})\big) + 
o_{P}(n^{-\frac{1}{4}})\big| \big)  
\big(1 + \big|F_{k\eps}^{-1}(u_{k})\big(1+o_{P}(n^{-\frac{1}{4}})\big) + o_{P}(n^{-\frac{1}{4}})\big| \big) } 
\\
\nobreak 
& \leq 2\,M
\end{align*}
}
uniformly in $(u_1,u_2) \in [0,1]^2$ and $\xx \in \JJ_{n}$. 

Thus to prove 
$$
 \Es\!^\ast \big\{w_{n}(\XX) 
 \,F_{\eps}^{(j,k)}\big(F_{1\eps}^{-1}(u_{1\XX}), 
  F_{2\eps}^{-1}(u_{2\XX})\big)  
 Y_{j\XX}(u_{j})\,Y_{k\XX}(u_{k}) \big\} = o_{P}(n^{-1/2}),  
$$
it is sufficient to use once more Lemma~\ref{lemma hatm and hatsigma}\,(iii).

\subsection*{A3: Showing~\texorpdfstring{\eqref{eq: as negl of tildeGnOR minus GnOR}}{(A4)}}
Recall that $W_{n} = \suman w_{ni}$ and decompose 
\allowdisplaybreaks{
\begin{align}
\notag 
 \sqrt{n}& \big(\widetilde{G}_{n}^{(or)}(u_1,u_2) 
 - G_{n}^{(or)}(u_1,u_2)\big)  
\\*
\notag 
 & = \sqrt{n}\big(\widetilde{G}_{n}^{(or)}(u_1,u_2) - C(u_1,u_2)\big)
  - \sqrt{n}\big(G_{n}^{(or)}(u_1,u_2) - C(u_1,u_2)\big) 
\\
\notag 
 & = \frac{n}{\sqrt{n}}\,\suman  \Big(\frac{w_{ni}}{W_{n}} - \frac{w_{ni}}{n} +  \frac{w_{ni}}{n} - \frac{1}{n}\Big)\,\big[\ind \big\{\eps_{1i} \leq F_{1\eps}^{-1}(u_1),  
 \eps_{2i} \leq F_{2\eps}^{-1}(u_2) \big\} - C(u_1,u_2) \big]
\\
\label{eq: tildeGnOR minus tildeGnOR decomposition}
&= \frac{\big(\frac{n}{W_{n}} - 1\big)}{\sqrt{n}}\,\suman  w_{ni}\,\big[\ind \big\{\eps_{1i} \leq F_{1\eps}^{-1}(u_1),  
 \eps_{2i} \leq F_{2\eps}^{-1}(u_2) \big\} - C(u_1,u_2) \big]
\\*
\notag 
& \quad + \frac{1}{\sqrt{n}}\,\suman  (w_{ni}-1)\,\big[\ind \big\{\eps_{1i} \leq F_{1\eps}^{-1}(u_1),  
 \eps_{2i} \leq F_{2\eps}^{-1}(u_2) \big\} - C(u_1,u_2) \big]
\\
\notag 
& = \big(\tfrac{n}{W_{n}} - 1\big)\,B_{n1}(u_1,u_2) + B_{n2}(u_1,u_2), 
\end{align}}
$\! $where $B_{n1}(u_1,u_2)$ stands for the first term on the right-hand 
side of the equation~\eqref{eq: tildeGnOR minus tildeGnOR decomposition} 
(except for the factor $\frac{n}{W_{n}} - 1$) and $B_{n2}(u_1,u_2)$
for the second term. Further 
using standard techniques one 
can show that both $B_{n1}(u_1,u_2)$ and $B_{n2}(u_1,u_2)$ viewed as 
processes on $[0,1]^2$ are asymptotically equi-continuous. 
To this end, note that $B_{n1}(u_1,u_2)$ corresponds to the process $\bar Z_n(f)$ as defined in Section~\textbf{A1} above with 
$ f \hat= \big(c_n, F_{1\eps}^{-1}(u_1), F_{2\eps}^{-1}(u_2),0, 1, 0,1\big)$. Alternatively, results by \cite{bickel1971convergence} can be applied. 
Moreover as 
$$
 \tfrac{n}{W_{n}} - 1 = o_{P}(1), \quad \text{and} \quad 
 \E \big(w_{n}(\XX_{1})-1 \big) = - \PP(\XX_{1} \not \in \JJ_{n}) = o(1), 
$$
one can conclude that both processes $\big(\tfrac{n}{W_{n}} - 1\big)\,B_{n1}(u_1,u_2)$ and $B_{n2}(u_1,u_2)$ are uniformly asymptotically negligible 
in probability, which together with~\eqref{eq: tildeGnOR minus tildeGnOR decomposition} implies~\eqref{eq: as negl of tildeGnOR minus GnOR}. 
%


\bigskip

\renewcommand{\theequation}{B\arabic{equation}}
\setcounter{equation}{0}
\section*{Appendix B - Proof of Theorem~\ref{thm equiv of param estim}}

Thanks to assumption~$\mathbf{(\bphi)}$, 
the estimator $\hatbtheta_n$ is a solution to the estimating 
equations~\eqref{eq: estim equations}.  

In what follows, first we prove the existence of a consistent root 
of the estimating equations~\eqref{eq: estim equations} 
and then we derive that this root satisfies 
\begin{equation} \label{eq: as representation}
 \sqrt{n}\,\big(\hatbtheta_n - \btheta \big) 
 = \bGamma^{-1} \frac{1}{\sqrt{n}}\suman 
     \bphi\big(\Uhat,\Vhat;\btheta\big)    
  + o_{P}(1),   
\end{equation}
where $(\Uhat, \Vhat)$ are introduced in~\eqref{eq: Uhat}. 
The statement of the theorem now follows for $p=1$ by 
Proposition~A\,1(ii) of \cite{genest_et_al_1995} and for $p > 1$ 
by Theorem~1 of \cite{ogvp_testing_rao_2017}.

\subsection*{Proving consistency}
Put 
$
 \widetilde{C}_{n}'(u_1,u_2) = \frac{1}{W_n} \suman w_{ni}\,\ind\big\{\Utilde \leq u_1, \Vtilde \leq u_{2} \big\},   
$
where pseudo-observations $(\Utilde, \Vtilde)$ are defined in~\eqref{eq: Utilde}. Note that 
\begin{equation} \label{eq: equivalency of hatC and tildeC}
 \sup_{(u_1,u_2) \in [0,1]^2} \big|\widetilde{C}_{n}(u_1,u_2) - \widetilde{C}_{n}'(u_1,u_2)\big|  
 = O_{P}\big(\tfrac{1}{W_n}\big) = O_{P}\big(\tfrac{1}{n}\big).  
\end{equation}
%
Fix $l \in \{1,\dotsc,p\}$. 
By Corollary~A.7 of \cite{berghaus2017weak} one gets 
\begin{align}
\notag
  \frac{1}{W_{n}} &\suman w_{ni}\, 
 \phi_{l}\big(\Utilde, \Vtilde;\tt \big) = \int_{0}^{1}\int_{0}^{1} \phi_{l}(v_1,v_{2}; \tt)\,d\widetilde{C}_{n}'(v_1,v_2) 
\\ 
\notag 
 &=  \int_{0}^{1}\int_{0}^{1} \widetilde{C}_{n}'(v_1,v_2)\,d\phi_{l}(v_1,v_{2}; \tt) 
  +   \phi_{l}(1, 1; \tt\big) 
\\
\label{eq: per partes empirical} 
 &\quad - \int_{0}^{1}\widetilde{C}_{n}'(v_1,1)\,d\phi_{l}(v_1,1; \tt) 
  - \int_{0}^{1}\widetilde{C}_{n}'(1,v_2)\,d\phi_{l}(1,v_2; \tt).  
\end{align}
Note that thanks to assumption $\mathbf{(\bphi^{(j)})}$ (uniformly in $\tt \in V(\btheta)$)
\begin{align}
\notag 
 \int_{0}^{1}&\widetilde{C}_{n}'(v_1,1)\,d\phi_{l}(v_1,1; \tt) 
 = \frac{1}{W_n}\suman w_{ni}\,\int_{0}^{1}\ind\{\Utilde \leq v_1\}\,d\phi_{l}(v_1,1; \tt)  
\\ 
\notag 
 &= \frac{1}{W_n}\sum_{i=1}^{W_n}\,\int_{\tfrac{i}{W_{n}+1}}^{1} d\phi_{l}(v_1,1; \tt) 
 = \phi_{l}(1,1; \tt)  - \frac{1}{W_n}\sum_{i=1}^{W_n} \phi_{l}(\tfrac{i}{W_n+1},1; \tt) 
\\ 
\label{eq: Cn with respect to df marginal 1}
& = \phi_{l}(1,1; \tt)  - \int_{0}^{1} \phi_{l}(v_1,1; \tt)\,dv_1 + O_{P}\big(\tfrac{1}{W_n}\big) 
\end{align}
and analogously also 
\begin{equation} 
\label{eq: Cn with respect to df marginal 2}
 \int_{0}^{1}\widetilde{C}_{n}'(1,v_2)\,d\phi_{l}(1,v_2; \tt) 
 = \phi_{l}(1,1; \tt)  - \int_{0}^{1} \phi_{l}(1,v_2; \tt)\,dv_2 + O_{P}\big(\tfrac{1}{W_n}\big).  
\end{equation}
Now combining \eqref{eq: per partes empirical}, \eqref{eq: Cn with respect to df marginal 1} 
and \eqref{eq: Cn with respect to df marginal 2} yields  
\begin{equation}
  \frac{1}{W_{n}}  \suman w_{ni}\, 
 \phi_{l}\big(\Utilde, \Vtilde;\tt 
 \big) 
%
\label{eq: per partes empirical final}  
 = \int_{0}^{1}\int_{0}^{1} \widetilde{C}_{n}'(v_1,v_2)\,d\phi_{l}(v_1,v_{2}; \tt) 
  + A_{l}(\tt) 
 + O_{P}\big(\tfrac{1}{n}\big),  
\end{equation}
where 
\begin{equation} \label{eq: Al}
A_{l}(\tt) = - \phi_{l}(1, 1; \tt\big) 
+ \int_{0}^{1} \phi_{l}(v_1,1; \tt)\,dv_1 + \int_{0}^{1} \phi_{l}(1,v_2; \tt)\,dv_2.  
\end{equation}

Analogously one gets 
\begin{equation}
 \label{eq: per partes true copula final}  
 \Es \phi_{l}\big(U_{11}, U_{21};\tt \big) 
  = \int_{0}^{1}\int_{0}^{1} C(v_1,v_2)\,d\phi_{l}(v_1,v_{2}; \tt) + A_{l}(\tt).    
\end{equation}
Now using \eqref{eq: equivalency of hatC and tildeC},  
\eqref{eq: per partes empirical final}, \eqref{eq: per partes true copula final} 
and assumption~$\mathbf{(\bphi)}$ gives that 
uniformly in $\tt \in V(\btheta)$ 
\begin{align}
\notag 
 \frac{1}{W_{n}}& \suman w_{ni}\, 
 \phi_{l}\big(\Utilde, \Vtilde;\tt \big) - \Es \phi_l(U_{11},U_{21}; \tt) 
\\
\notag 
&= \int_{0}^{1} \int_{0}^{1}  \big[\widetilde{C}_{n}'(v_1,v_2) - C(v_1,v_2) \big]\,d\phi_{l}(v_1,v_{2}; \tt) + O_{P}\big(\tfrac{1}{n}\big) = o_{P}(1), 
\end{align}
where we have used Theorem~\ref{thm equiv of Cn} and assumption~$\mathbf{(\bphi)}$.  
The existence of a consistent root of estimating equations~\eqref{eq: estim equations}  
now follows by assumptions~$\mathbf{(Id)}$ and~$\mathbf{(\bGamma)}$. 

Analogously one can show the existence of a consistent root of estimating equations~\eqref{eq: estim equations orac}.

\subsection*{Showing~\texorpdfstring{\eqref{eq: as representation}}{(B1)}}

Let $\hatbtheta_n$ be a consistent root of the estimating 
equations~\eqref{eq: estim equations}. Then by the mean value 
theorem applied to each coordinate of the vector-valued function 
$$
\boldsymbol{\Psi}_{n}(\tt) = \frac{1}{W_n} \suman w_{ni}\, 
 \bphi\big(\Utilde, \Vtilde;\tt \big) 
$$
one gets  
\begin{align*}
 \boldsymbol{0}_{p} &= \frac{1}{W_n} \suman w_{ni}\, 
 \bphi\Big(\Utilde, \Vtilde;\hatbtheta_n
 \Big)  
\\
 &=  \frac{1}{W_n} \suman w_{ni}\, 
 \bphi\Big(\Utilde, \Vtilde;\btheta
 \Big) 
 + \frac{1}{W_n} \suman w_{ni}\, 
 \mathbb{D}_{\bphi}\Big(\Utilde, \Vtilde;\btheta_{n}^{*}
 \Big) \, \big(\hatbtheta_n - \btheta \big),  
\end{align*}
where $\mathbb{D}_{\bphi}$ stands for 
$\tfrac{\partial \bphi(u_1,u_2;\tt)}{\partial \tt}$ and 
$\btheta_{n}^{*}$ is between $\hatbtheta_n$  and $\btheta$. 
Note that as the mean value theorem is applied to a vector 
valued function there are in fact $p$ different points 
$\btheta_{n}^{*,1},\dotsc,\btheta_{n}^{*,p}$ for each coordinate 
of the function $\boldsymbol{\Psi}_{n}(\tt)$ but all of them 
are consistent so for simplicity of notation we do not distinguish 
them. 

Thus to finish the proof of~\eqref{eq: as representation}  
it is sufficient to show that 
\begin{equation} \label{eq: consistency of Gamma}
 \frac{1}{W_n} \suman w_{ni}\, 
 \mathbb{D}_{\bphi}\big(\Utilde, \Vtilde;\btheta_{n}^{*}
 \big) = \bGamma + o_{P}(1)
\end{equation}
and 
%
\begin{equation}
 \frac{\sqrt{n}}{W_n} \suman w_{ni}\, 
 \bphi\big(\Utilde, \Vtilde;\btheta \big) 
%
\label{eq: as normality of score}
= 
\frac{1}{\sqrt{n}}\suman 
 \bphi\big(\Uhat,\Vhat;\btheta \big) + o_{P}(1). 
\end{equation}
%


When proving \eqref{eq: consistency of Gamma} 
one can mimic the proof of consistency of $\hatbtheta_n$ 
and show that there exists~$V(\btheta)$ (an open neighbourhood of 
$\btheta$ such that)
\begin{equation*} 
 \sup_{\tt \in V(\btheta)}\bigg\| \frac{1}{n} \suman w_{ni}\, 
 \mathbb{D}_{\bphi}\big(\Utilde, \Vtilde;\tt \big) 
 - \Es\mathbb{D}_{\bphi}(U_{11}, U_{21};\tt)  
   \bigg\|= o_{P}(1).   
\end{equation*}
Using the consistency of $\hatbtheta_n$ and assumption~$\mathbf{(\bGamma)}$ 
yields~\eqref{eq: consistency of Gamma}.

\medskip 

Thus one can concentrate on proving \eqref{eq: as normality of score}.  
Put 
$
 C_{n}'^{(or)}(u_1,u_2) 
  = \frac{1}{n} \suman \ind\big\{\Uhat \leq u_1, \Vhat \leq u_{2} \big\},   
$
where $(\Uhat, \Vhat)$ are defined in~\eqref{eq: Uhat}. Note that 
\begin{equation} \label{eq: equivalency of hatCor and tildeCor}
 \sup_{(u_1,u_2) \in [0,1]^2} \big|C_{n}^{(or)}(u_1,u_2) - C_{n}'^{(or)}(u_1,u_2)\big|  
 = O_{P}\big(\tfrac{1}{n}\big).  
\end{equation}
Analogously as~\eqref{eq: per partes empirical final} 
one can also show that for $l=1,\dotsc,p$  
\begin{align}
\notag 
 \frac{1}{n} \suman 
 \phi_{l}\big(\Uhat, \Vhat;\btheta \big) 
& = \int_{0}^{1} \int_{0}^{1}  \phi_l(v_1,v_2; \btheta)\,dC_{n}'^{(or)}(v_1,v_2)  
\\ 
\label{eq: per partes empirical orac} 
 & = \int_{0}^{1} \int_{0}^{1}  
      C_{n}'^{(or)}(v_1,v_2)\,d\phi_l(v_1,v_2; \btheta)\,
  + A_{l}(\btheta) + O_{P}\big(\tfrac{1}{n}\big),  
\end{align}
where $A_{l}(\btheta)$ is given in~\eqref{eq: Al}. 

Now using \eqref{eq: equivalency of hatC and tildeC}, 
\eqref{eq: per partes empirical final}, 
\eqref{eq: equivalency of hatCor and tildeCor},  
\eqref{eq: per partes empirical orac}, 
Theorem~\ref{thm equiv of Cn} and $\mathbf{(\bphi)}$  one gets 
%
\begin{align*}
\notag 
 \frac{\sqrt{n}}{W_{n}}& \suman w_{ni}\, 
 \phi_{l}\big(\Utilde, \Vtilde;\btheta \big) - \frac{1}{\sqrt{n}} \suman 
 \phi_{l}\big(\Uhat, \Vhat;\btheta\big)
\\
 &=  \int_{0}^{1} \int_{0}^{1}  
   \, \sqrt{n}\,\big[\widetilde{C}_{n}'(u_1,u_2) - C_{n}'^{(or)}(u_1,u_2) \big]\,
  d\phi_{l}(u_1,u_2; \btheta) + o_{P}\big(\tfrac{1}{\sqrt{n}}\big) 
 = o_{P}(1), 
\end{align*}
which verifies~\eqref{eq: as normality of score} and finishes the proof 
of~\eqref{eq: as representation}.

\renewcommand{\theequation}{C\arabic{equation}}
\setcounter{equation}{0}
\section*{Appendix C - Auxiliary results}

%

%
\begin{lemma} \label{lemma hatm and hatsigma}
Assume that $\mathbf{(\boldsymbol{\beta})}$, $\mathbf{(F_{\beps})}$, $\mathbf{(M)}$, 
$\mathbf{(F_{\XX})}$, $\mathbf{(Bw)}$, 
$\mathbf{(k)}$, $\mathbf{(J_{n})}$ and $\mathbf{(m\bsigma)}$ are 
satisfied.  Then 
%
%
%
there exist random functions~$\hata_{j}$ and $\hatb_{j}$ on $\JJ_n$ such that for $j=1,2$ 
\begin{enumerate}
\vspace{0.2cm}
\item
$\displaystyle \sup_{\xx \in \JJ_n}\Big|\frac{\hatm_{j}(\xx) - m_{j}(\xx)}{\sigma_j(\xx)} - \hata_{j}(\xx)\Big| = o_{P}\big(n^{-1/2}\big), 
 \  
 \sup_{\xx \in \JJ_n}\Big|\frac{\hatsigma_{j}(\xx) }{\sigma_j(\xx)} - 
     \hatb_{j}(\xx)\Big| = o_{P}\big(n^{-1/2}\big),  
$

\vspace{0.2cm}
\item
%
$\displaystyle \|\hata_j\|_{d+\delta}=o_P(1),\quad \|\hatb_j-1\|_{d+\delta}=o_P(1)$
for $\delta>0$ from assumption $\mathbf{(Bw)}$, 

\vspace{0.2cm}
\item
$\displaystyle
 \sup_{\xx \in \JJ_n}\big|\hata_j(\xx)\big| = o_{P}\big(n^{-1/4}\big), 
 \qquad 
 \sup_{\xx \in \JJ_n}\big|\hatb_j(\xx) - 1\big| = o_{P}\big(n^{-1/4}\big),
$

\vspace{0.2cm}
\item
$\displaystyle
 \int_{\JJ_n} \hata_j(\xx)f_{\XX}(\xx)\, d\xx = \frac{1}{n}\sum_{i=1}^n \eps_{ji}+o_{P}\big(n^{-1/2}\big)$, 

$ \displaystyle 
 \int_{\JJ_n} \Big(\hatb_j(\xx) - 1\Big)\,f_{\XX}(\xx)\, d\xx 
  = \frac{1}{2n}\sum_{i=1}^n \big(\eps_{ji}^2 - 1 \big)+o_{P}\big(n^{-1/2}\big).
$
\end{enumerate}

%
\end{lemma}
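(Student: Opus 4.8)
The plan is to establish a uniform Bahadur-type linearization of the two local polynomial estimators and to read off $\hata_j$ and $\hatb_j$ as their leading stochastic parts. Writing the minimization problem \eqref{minimization} in matrix form, with design vector $\boldsymbol{\psi}_{\hh_n}(\XX_\ell-\xx)$ collecting the entries $\psi_{\ii,\hh_n}(\XX_\ell-\xx)$, $\ii\in\setI$, and weights $K_{\hh_n}(\XX_\ell-\xx)$, the component $\hatm_j(\xx)=\widehat{\beta}_{\mathbf{0}}$ admits the standard decomposition into a deterministic bias of order $h_n^{p+1}$ (controlled by $m_j\in C^{p+1}_{M_n}(\JJ_n)$ from $\mathbf{(m\bsigma)}$), a term linear in the innovations, and a second-order remainder. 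First I would set
\[
 \hata_j(\xx)=\frac{1}{\sigma_j(\xx)\,n\,f_{\XX}(\xx)}\sum_{\ell=1}^{n}\ee\tr\mathbf{S}^{-1}\boldsymbol{\psi}_{\hh_n}(\XX_\ell-\xx)\,K_{\hh_n}(\XX_\ell-\xx)\,\sigma_j(\XX_\ell)\,\eps_{j\ell},
\]
where $\mathbf{S}=\int\boldsymbol{\psi}(\uu)\boldsymbol{\psi}\tr(\uu)\,k(\uu)\,d\uu$ is the limiting weighted moment matrix of the local polynomial basis (with $k(\uu)$ the product kernel $\prod_{r=1}^{d}k(u_r)$); this is the natural equivalent-kernel representation of the standardized stochastic part of $\hatm_j-m_j$.

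For $\hatsigma_j$, since $\hatsigma_j^2=\widehat{s}_j-\hatm_j^2$ and $\sigma_j^2=s_j-m_j^2$ with $s_j(\xx)=m_j^2(\xx)+\sigma_j^2(\xx)$, I would expand $\widehat{s}_j-s_j$ exactly as $\hatm_j-m_j$ but with ``innovation'' $Y_{j\ell}^2-s_j(\XX_\ell)=2m_j(\XX_\ell)\sigma_j(\XX_\ell)\eps_{j\ell}+\sigma_j^2(\XX_\ell)(\eps_{j\ell}^2-1)$, and combine it with $\hatm_j^2-m_j^2\approx 2m_j(\xx)(\hatm_j-m_j)$. The key observation is that, to leading order, the kernel localizes $\XX_\ell\approx\xx$, so the $2m_j\sigma_j\eps_{j\ell}$ contribution of $\widehat{s}_j-s_j$ cancels the $\eps$-linear part of $2m_j(\hatm_j-m_j)$, leaving only the $\sigma_j^2(\eps_{j\ell}^2-1)$ term. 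A delta-method step $\hatsigma_j/\sigma_j-1=(\hatsigma_j^2-\sigma_j^2)/(\sigma_j(\hatsigma_j+\sigma_j))\approx(\hatsigma_j^2-\sigma_j^2)/(2\sigma_j^2)$ then yields the analogous representation for $\hatb_j$ with prefactor $1/(2\sigma_j^2(\xx))$. Part (i) is the claim that bias plus all remainders are $o_P(n^{-1/2})$ uniformly on $\JJ_n$: the bias is $O(h_n^{p+1})$ up to log factors, which is $o_P(n^{-1/2})$ by $nh_n^{2p+2}(\log n)^D=o(1)$ in $\mathbf{(Bw)}$, while the quadratic remainders (products of the uniform errors of the design matrix and of the score vector) are controlled through Hansen's (2008) uniform rates together with $\mathbf{(M)}$ and $\mathbf{(\boldsymbol{\beta})}$.

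For (iii) I would work directly with the explicit kernel representation: each of $\hata_j$ and $\hatb_j-1$ is a kernel-weighted average, whose uniform size is $O_P((\log n/(nh_n^d))^{1/2})=o_P(n^{-1/4})$ under $\mathbf{(Bw)}$. Differentiating in $\xx$ (legitimate since $k$ is $(d+2)$-times differentiable by $\mathbf{(k)}$) produces one extra factor $h_n^{-1}$ per derivative, so $\|\hata_j\|_{d+\delta}$ and $\|\hatb_j-1\|_{d+\delta}$ are of order $h_n^{-(d+\delta)}(\log n/(nh_n^d))^{1/2}=n^{-1/2}h_n^{-(3d/2+\delta)}(\log n)^{1/2}$, which is $o_P(1)$ precisely because of the second condition $nh_n^{3d+2\delta}(\log n)^{-D}\to\infty$ in $\mathbf{(Bw)}$; this is exactly the role that assumption plays, yielding (ii). For (iv) I would integrate $\hata_j(\xx)f_{\XX}(\xx)$ over $\JJ_n$, interchange sum and integral, and substitute $\uu=(\XX_\ell-\xx)/\hh_n$. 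The reproducing property $\int\boldsymbol{\psi}(\uu)k(\uu)\,d\uu=\mathbf{S}\ee$, hence $\int\ee\tr\mathbf{S}^{-1}\boldsymbol{\psi}(\uu)k(\uu)\,d\uu=1$, reduces the inner integral to $1/\sigma_j(\XX_\ell)+O(h_n)$, so that $\int_{\JJ_n}\hata_jf_{\XX}=\tfrac1n\sum_{\ell}\eps_{j\ell}+o_P(n^{-1/2})$; the same computation with the $\sigma_j^2(\eps_{j\ell}^2-1)$ innovation and the prefactor $1/(2\sigma_j^2)$ gives the stated representation of $\int_{\JJ_n}(\hatb_j-1)f_{\XX}$.

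The main obstacle is the uniformity of all these approximations over the expanding region $\JJ_n=[-c_n,c_n]^d$, on which $\mathbf{(F_{\XX})}$ and $\mathbf{(m\bsigma)}$ only guarantee that $\inf_{\JJ_n}f_{\XX}$ and $\inf_{\JJ_n}\sigma_j$ decay no faster than a negative power of $\log n$. Because $f_{\XX}(\xx)$ and $\sigma_j(\xx)$ sit in the denominators, every uniform rate must absorb these vanishing factors, and keeping the bias, the second-order remainders of (i), and the derivative bounds of (ii) all negligible simultaneously forces the joint use of the two bandwidth inequalities in $\mathbf{(Bw)}$ (whence the compatibility $2p+2>3d+2\delta$ of Remark~\ref{remark-bandwidth}), the moment condition $\mathbf{(M)}$, and the $\beta$-mixing rate $\mathbf{(\boldsymbol{\beta})}$ feeding Hansen's uniform convergence theorem. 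Verifying that those rates indeed hold uniformly on the growing set $\JJ_n$ under these degenerating normalizations is the delicate technical heart of the argument.
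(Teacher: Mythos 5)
Your overall strategy coincides with the paper's: linearize the local polynomial estimators through their normal equations, take $\hata_j$ to be the standardized innovation-linear part, obtain $\hatb_j$ from $\hatsigma_j^2=\widehat{s}_j-\hatm_j^2$ by a delta-method expansion in which the $2m_j\sigma_j\eps_{j\ell}$ contributions cancel, control everything uniformly on the expanding set $\JJ_n$ via Hansen's (2008) rates while tracking the $\log$-factors coming from $\inf_{\JJ_n}f_{\XX}$ and $\inf_{\JJ_n}\sigma_j$, and use the two bandwidth inequalities in $\mathbf{(Bw)}$ exactly where you say they are needed (bias for (i), the H\"older norm for (ii)). Your accounts of (ii), (iii) and of the structure of $\hatb_j$ match the paper's proof in substance.

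There is, however, one concrete gap. You normalize $\hata_j$ by the limiting matrix $\mathbf{S}^{-1}/f_{\XX}(\xx)$, whereas the paper uses $\QQ^{-1}(\xx)$ with $\QQ(\xx)=\E\big[\widehat{\QQ}(\xx)\big]$, whose entries are $\int\psi_{\ii}(\uu)\psi_{\kk}(\uu)f_{\XX}(\xx+h_n\uu)K(\uu)\,d\uu$. The discrepancy $\QQ(\xx)-f_{\XX}(\xx)\mathbf{S}$ is a deterministic smoothing bias of order $h_n$, not a stochastic fluctuation, so with your definition the remainder in part (i) acquires a term of order $h_n\varrho_n=(\log n)^{1/2}n^{-1/2}h_n^{1-d/2}$ (further inflated by the negative powers of $\log n$ from $\inf f_{\XX}$ and $\inf\sigma_j$); this is $o_P(n^{-1/2})$ only for $d=1$ and fails for $d\geq 2$. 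Your ``products of uniform errors of the design matrix and the score vector'' argument covers only the genuinely quadratic $O_P(\varrho_n^2)$ piece; centering the design matrix at its exact expectation, as the paper does, is what makes $\widehat{\QQ}^{-1}-\QQ^{-1}=O_P(\varrho_n)$ and hence the whole remainder $o_P(n^{-1/2})$. Relatedly, in (iv) the exact reproducing property $\int\ee\tr\mathbf{S}^{-1}\bpsi(\uu)K(\uu)\,d\uu=1$ is only available when the kernel window around $\XX_\ell$ stays inside $\JJ_n$; the paper needs a separate variance bound for the boundary layer $\JJ_{n}^{+}\setminus\JJ_{n}^{-}$ and an $L^2$ argument showing $\E\big[(\Delta_n^{(1)}(\XX_i)-1)^2\ind\{\XX_i\in\JJ_n^-\}\big]\to0$, steps your sketch leaves implicit. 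Both issues are repairable within your framework, but as written part (i) does not go through for $d\geq 2$.
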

\begin{proof}
For ease of presentation we set $j=1$ and assume $\hh_{n} = \big(h_n, \dotsc, h_n\big)$. 
We will first prove the assertions for $\hatm_1 $. The proof basically 
goes along the lines of the proof of Lemma~1 by \cite{MSW2009}, but changes 
are necessary due to the dependency of observations in our model and because our covariate density is not assumed to be bounded away from zero on its support. 
Recall that $\setI(d,p)$ denotes the set of multi-indices $\ii=(i_1,\dotsc,i_d)$ with $i.=i_1+\dots+i_d\leq p$ 
and we set $\setI=\setI(d,p)$, where $p$ is the order of the polynomials used in the local polynomial estimation. 
Further introduce 
$\JJ_{n}^{+}=[-c_n-h_n,c_n+h_n]^d$  
and note thanks to assumption $\mathbf{(Bw)}$ 
\begin{equation} \label{eq: alpha one}
 \alpha_n^{(1)}:=\inf_{\xx\in \JJ_{n}^{+}}f_{\XX}(\xx) \geq \frac{1}{(\log n)^{q}}
  \quad \text{for some } q > 0,
\end{equation}
as for all sufficiently large~$n$ the set $\JJ_{n}^{+}$ is a subset of 
 $\JJ_{2n}$. 
Finally define \linebreak $\alpha_n^{(2)}:=\min_{j=1,2}\inf_{\xx\in \JJ_n}\sigma_j(\xx)$ 
which is by assumption $\mathbf{(m\bsigma)}$ either bounded away from zero or converges 
to zero not faster than a negative power of $\log n$. 

\smallskip 

\noindent \underline{Proof of assertion (i) for $\hatm_{1}$}. 
Fix some $\xx\in\JJ_n$ and let $\widehat{\bbeta}$ denote the solution of the minimization problem~\eqref{minimization}. Then $\widehat{\bbeta}$  satisfies the normal equations
$$
 A_{\ii}(\xx)+B_{\ii}(\xx)-\sum_{\kk \in \setI} \widehat{Q}_{\ii\kk}(\xx)\widehat{\beta}_{\kk} = 0 
 \quad\forall \ii \in \setI,
$$
where
\begin{eqnarray*}
A_{\ii}(\xx) &=& \frac{1}{n}\sum_{\ell=1}^n\sigma_1(\XX_\ell)\,
\eps_{1\ell}\,\psi_{\ii,\hh_n}(\XX_\ell-\xx)\,K_{\hh_n}(\XX_\ell-\xx), 
\\
B_{\ii}(\xx) &=& \frac{1}{n}\sum_{\ell=1}^nm_1(\XX_\ell)\psi_{\ii,\hh_n}(\XX_\ell-\xx)K_{\hh_n}(\XX_\ell-\xx),
\\
\widehat{Q}_{\ii\kk}(\xx) &=& \frac{1}{n}\sum_{\ell=1}^n\psi_{\ii,\hh_n}(\XX_\ell-\xx)\,\psi_{\kk,\hh_n}(\XX_\ell-\xx)\,K_{\hh_n}(\XX_\ell-\xx). 
\end{eqnarray*} 
From Theorem 2 in \cite{hansen2008} we obtain for $\varrho_n=\big(\log n/(nh_n^d)\big)^{1/2}$,
\begin{equation}\label{convergence hat Q}
\sup_{\xx\in \JJ_n}\big|\widehat{Q}_{\ii\kk}(\xx) - Q_{\ii\kk}(\xx)\big| = O_P(\varrho_n),
\end{equation}
where we define $Q_{\ii\kk}(\xx)=\E\,\big[\widehat{Q}_{\ii\kk}(\xx)\big]$, $\ii,\kk \in \setI$. Note that
$$
 Q_{\ii\kk}(\xx)=\int \psi_{\ii,(1,..1)}(\uu)\,\psi_{\kk,(1,..1)}(\uu)\,f_{\XX}(\xx+h_n\uu)\,K(\uu)\,d\uu,
$$
and for  $\xx\in\JJ_n$, consider the matrices $\QQ(\xx)$ with entries $Q_{\ii\kk}(\xx)$, $\ii,\kk \in \setI$.
Analogously put $\widehat{\QQ}(\xx)$ for the matrix with entries $\widehat{Q}_{\ii\kk}(\xx)$, $\ii,\kk \in \setI$. 

It follows from  \eqref{eq: alpha one}
that $0<\lambda_n\leq \aa\tr \QQ(\xx)\,\aa\leq \Lambda<\infty$ for all vectors $\aa$ of unit Euclidean length,
 where $\lambda_n$ is a sequence of positive real numbers of the same rate as $\alpha_n^{(1)}$ in  \eqref{eq: alpha one}.  
Thus $\QQ(\xx)$  has eigenvalues in the interval $[\lambda_n,\Lambda]$, and on the event
\begin{equation*} 
 E_n=\bigg\{\sup_{\xx\in \JJ_n} \big\|\widehat{\QQ}(\xx)-\QQ(\xx)\big\|\leq \frac{\lambda_n}{2}\bigg\}
\end{equation*}
one has $\aa\tr \widehat{\QQ}(\xx)\,\aa\geq \lambda_n/2$ for all $\aa$ of unit Euclidean length, 
such that the matrix $\widehat{\QQ}(\xx)$ is invertible as well.
Here and throughout $\|\QQ\|$ denotes the spectral norm of a matrix $\QQ$. 
Note that $\PP(E_n)\to 1$ by \eqref{convergence hat Q} and $\varrho_n=o\big(\alpha_n^{(1)}\big)$, which holds under assumption $\mathbf{(Bw)}$.
For the remainder of the proof we assume that the event $E_n$ takes place because its complement does not matter for the assertions of the lemma.  It follows from the normal equations that for $\xx\in\JJ_n$,
$$
 \hatm_1(\xx)= \ee \tr \, \widehat{\QQ}^{-1}(\xx)\big(\AA(\xx) + \BB(\xx)\big),
$$
where $\ee=(1,0,\dotsc,0)\tr $ and $\AA(\xx)$ and $\BB(\xx)$ denote the vectors with components $A_{\ii}(\xx)$ and $B_{\ii}(\xx)$, $\ii \in \setI$, respectively.
Now define
\begin{equation} \label{eq: ahat}
 \hata_1(\xx) = \ee \tr \, \QQ^{-1}(\xx)\AA(\xx)\frac{1}{\sigma_1(\xx)},\quad \xx\in \JJ_n,
\end{equation}
then we have  the decomposition 
\begin{equation} \label{eq: mhat minus ahat}
 \frac{\hatm_1(\xx)-m_1(\xx)}{\sigma_1(\xx)} - \hata_1(\xx)=r_1(\xx)+r_2(\xx)
\end{equation}
with remainder terms
\begin{eqnarray*}
r_1(\xx) &=& \ee \tr \big(\widehat{\QQ}^{-1}(\xx)-\QQ^{-1}(\xx)\big) \AA(\xx)\frac{1}{\sigma_1(\xx)},
\\
r_2(\xx)&=& \ee \tr \, \widehat{\QQ}^{-1}(\xx)\big(\BB(\xx)-\widehat{\QQ}(\xx)\overline{\bbeta}(\xx)\big)
 \frac{1}{\sigma_1(\xx)},
\end{eqnarray*}
where $\overline{\bbeta}(\xx)$ is the vector with components $\overline{\beta}_{\ii}(\xx)=h_n^{i.}\,D^{\ii} m_1(\xx)$, 
$\ii \in \setI$.
From Theorem 2 in \cite{hansen2008} we obtain  
\begin{eqnarray}\label{rate A}
\sup_{\xx\in \JJ_n} \big|A_{\ii}(\xx)\big| = O_P(\varrho_n), \quad \ii \in \setI.
\end{eqnarray}
%
For the treatment of the inverse matrices in $r_1(\xx)$ we use Cramer's rule and write
\begin{eqnarray*}
\widehat{\QQ}^{-1}(\xx)-\QQ^{-1}(\xx) &=& \frac{1}{\det\big(\widehat{\QQ}(\xx)\big)}\big(\widehat{\bold{C}}(\xx)\big)\tr-\frac{1}{\det\big({\QQ}(\xx)\big)}\big({\bold{C}}(\xx)\big)\tr
\\
&=& \frac{\det\big({\QQ}(\xx)\big)-\det\big(\widehat{\QQ}(\xx)\big)}
 {\det\big(\widehat{\QQ}(\xx)\big)\det\big({\QQ}(\xx)\big)}\big(\widehat{\bold{C}}(\xx)\big)\tr 
 + \frac{1}{\det\big({\QQ}(\xx)\big)}\,\big(\widehat{\bold{C}}(\xx)-{\bold{C}}(\xx)\big)\tr,
\end{eqnarray*}
where $\widehat{\bold{C}}(\xx)$ and ${\bold{C}}(\xx)$ denote the cofactor matrices of $\widehat{\QQ}(\xx)$ and ${\QQ}(\xx)$, respectively. Due to the boundedness of the functions $Q_{\ii\kk}$ each element of $\widehat{\bold{C}}(\xx)-{\bold{C}}(\xx)$ can be absolutely bounded by $O_P(\varrho_n)$ by \eqref{convergence hat Q} and the same rate is obtained for $\big|\det\big({\QQ}(\xx)\big)-\det\big(\widehat{\QQ}(\xx)\big)\big|$, uniformly in~$\xx$. Using the lower bound $\lambda_n^{|\setI|}$ for the determinant of $\QQ(\xx)$, and assumption $\mathbf{(m\bsigma)}$ to bound $1/\sigma_1$ gives the rate  
\begin{equation}\label{rate-r1}
\sup_{\xx\in\JJ_n} \big|r_1(\xx)\big| 
 = O_P\Big(\tfrac{(\varrho_n)^2}{(\alpha_n^{(1)})^{2|\setI|}\,\alpha_n^{(2)}}\Big)
 = o_P\big(n^{-1/2}\big)
\end{equation}
%
by assumption $\mathbf{(Bw)}$. In order to show negligibility of $r_2(\xx)$ first note that the  spectral norm of $\widehat{\QQ}^{-1}(\xx)$ is given  by the reciprocal of the square root of the smallest eigenvalue of $\widehat{\QQ}(\xx)\tr \widehat{\QQ}(\xx)$. With 
$$
 \big(\aa\tr \widehat{\QQ}(\xx)\tr \widehat{\QQ}(\xx)\aa \big)^{1/2}
 = \big\|\widehat{\QQ}(\xx)\aa \big\|
   \geq (\aa\tr  \QQ(\xx)\tr  \QQ(\xx)\aa)^{1/2} 
   - \big\|\widehat{\QQ}(\xx)-\QQ(\xx)\big\|
 \geq\frac{\lambda_n}{2}
$$
(on $E_n$) for all $\aa$ with $||\aa||=1$,
we obtain the rate $O(\lambda_n^{-1})$ for $\big\|\widehat{\QQ}^{-1}(\xx)\big\|$.
Further, by Taylor expansion of $m_1(\XX_\ell)$ of order $p+1$ in the definition of $B_{\ii}(\xx)$  
and using assumption $\mathbf{(m\bsigma)}$ we have
\begin{eqnarray*}
\big\|B_{\ii}(\xx) - \big(\widehat{\QQ}(\xx)\overline{\bbeta}(\xx)\big)_{\ii} \big\| &\leq&
M_n\,\frac{1}{n}\sum_{\ell=1}^nh_n^{p+1}\,\|\psi_{\ii,\hh_n}(\XX_\ell-\xx)\|\,K_{\hh_n}(\XX_\ell-\xx)\\
&=& O\big(M_n h_n^{p+1}\big)\,\widehat{f}_{\XX}(\xx),
\end{eqnarray*}
%
where the kernel density estimator 
$\widehat{f}_{\XX}(\xx)=\frac{1}{n}\sum_{\ell=1}^nK_{\hh_n}(\XX_\ell-\xx)$ 
converges to $f_{\XX}(\xx)$ uniformly in $\xx\in \JJ_n$, see Theorem 6 by \cite{hansen2008}. 
 Altogether
we have 
\begin{equation}\label{rate-r2}
\sup_{\xx\in\JJ_n} \big|r_2(\xx)\big| 
  = O_P\Big(\tfrac{M_nh_n^{p+1}}{\alpha_n^{(1)}\alpha_n^{(2)}}\Big)=o_P\big(n^{-1/2}\big)
\end{equation}
using assumption $\mathbf{(Bw)}$. 

Now assertion (i) for $\hatm_1$ follows from \eqref{eq: ahat}, \eqref{eq: mhat minus ahat}, 
\eqref{rate-r1} and \eqref{rate-r2}. 

\noindent \underline{Proof of assertion (ii) for $\hata_1$}. 
%
Note that $p\geq d$ and thus $\hata_1$ is ($d+1$)-times partially differentiable and 
\begin{align*}
\|\hata_1\|_{d+\delta}&= \max_{\ii \in \setI(d,d)} \sup_{\xx \in \JJ_n} \big|D^{\ii}\hata_1(\xx)\big| 
\\
&\quad + \max_{\ii \in \setI(d,d)\atop i.=d}\max\left\{ \sup_{\xx,\xx' \in \JJ_n\atop \|\xx-\xx'\|\leq h_n} \frac{|D^{\ii}\hata_1(\xx)-D^{\ii}\hata_1(\xx')|}{\|\xx-\xx'\|^{\delta}}, \sup_{\xx,\xx' \in \JJ_n\atop \|\xx-\xx'\|> h_n} \frac{|D^{\ii}\hata_1(\xx)-D^{\ii}\hata_1(\xx')|}{\|\xx-\xx'\|^{\delta}}\right\}
\\
&\leq  \max_{\ii \in \setI(d,d)} \sup_{\xx \in \JJ_n} \big|D^{\ii}\,\hata_1(\xx)\big| \\
& + \max_{\ii \in \setI(d,d+1)\atop i.=d+1}\sup_{\xx \in \JJ_n} \big|D^{\ii}\hata_1(\xx)\big| 
 h_n^{1-\delta} + 2\max_{\ii \in \setI(d,d)\atop i.=d}\sup_{\xx \in \JJ_n} \big|D^{\ii}\hata_1(\xx)\big| h_n^{-\delta}
\end{align*}
by the mean value theorem.
Again by Theorem 2 of \cite{hansen2008} we have
$$
 \sup_{\xx\in \JJ_n} \big|h_n^{i.} D^{\ii} A_{\kk}(\xx)\big| = O_P(\varrho_n),\quad \ii \in \setI(d,d+1).
$$
Further note that 
$$
 \frac{\partial }{\partial x_k}\QQ^{-1}(\xx)=\QQ^{-1}(\xx) \bigg(\frac{\partial }{\partial x_k}\QQ(\xx)\bigg)\QQ^{-1}(\xx)
$$
and that the spectral norm of $\QQ^{-1}(\xx)$ can be bounded by $O\big(1/\alpha_n^{(1)}\big)$ with considerations as before. 
We apply the product rule for derivatives to obtain
\begin{align*}
\|\hata_1\|_{d+\delta}
&\leq \max_{\ell=1,\dotsc,d}\sum_{j=0}^\ell\sum_{k=0}^j O_P \Big(\tfrac{\varrho_n h_n^{-k}M_n^{\ell-j}}{(\alpha_n^{(1)})^{j-k+1}(\alpha_n^{(2)})^{\ell-j+1}}\Big)
%
+\sum_{j=0}^{d+1}\sum_{k=0}^j O_P \Big(\tfrac{\varrho_n h_n^{-k}M_n^{d+1-j}}{(\alpha_n^{(1)})^{j-k+1}(\alpha_n^{(2)})^{d-j+2}}\Big)h_n^{1-\delta}\\
&\qquad  + 2\sum_{j=0}^d\sum_{k=0}^j O_P \Big(\tfrac{\varrho_n h_n^{-k}M_n^{d-j}}{(\alpha_n^{(1)})^{j-k+1}(\alpha_n^{(2)})^{d-j+1}}\Big)h_n^{-\delta}
\\
&= O_P \Big(\tfrac{\varrho_n h_n^{-d}}{\alpha_n^{(1)}\alpha_n^{(2)}}\Big) 
 + O_P \Big(\tfrac{\varrho_n h_n^{-d-1}}{\alpha_n^{(1)}\alpha_n^{(2)}}\Big)h_n^{1-\delta}
 + O_P \Big(\tfrac{\varrho_n h_n^{-d}}{\alpha_n^{(1)}\alpha_n^{(2)}}\Big)h_n^{-\delta}
\\
&=
O_P\Big(\tfrac{\varrho_n}{h_n^{d+\delta}\alpha_n^{(1)}\alpha_n^{(2)}}\Big)\;=\;o_P(1)
\end{align*}
by assumption $\mathbf{(Bw)}$. 
 Assertion (ii) for $\hata_1$ follows.

\smallskip 

\noindent \underline{Proof of assertion (iii) for $\hata_1$}. 
From the definition of $\hata_1$ and (\ref{rate A}) we obtain that
\begin{equation*} 
\sup_{\xx\in\JJ_n}\big|\hata_1(\xx)\big| 
 =  O_P\Big(\tfrac{\varrho_n}{\alpha_n^{(1)}\alpha_n^{(2)}}\Big) = o_P\big(n^{-1/4}\big)
\end{equation*}
and thus (iii) follows for $\hata_1$. 

\smallskip 

\noindent \underline{Proof of assertion (iv) for $\hata_1$}. 
To prove (iv) note that
\begin{eqnarray*}
\int_{\JJ_n}\hata_1(\xx)f_{\XX}(\xx)\,d\xx  &=& \frac{1}{n}\sum_{i=1}^n \eps_{1i}\,\Delta_n(\XX_i)
\end{eqnarray*}
with
\begin{eqnarray*}
\Delta_n(\XX_i) &=& \sigma_1(\XX_i)\int\frac{1}{\sigma_1(\xx)} \ee\tr\, \QQ^{-1}(\xx)\, 
 \bpsi_{\hh_n}(\XX_i-\xx)\,K_{\hh_n}(\XX_i-\xx)\,f_{\XX}(\xx)\, d\xx
\\*
&=& \sigma_1(\XX_i)\int_{\big[\frac{\XX_i-c_n}{h_n},\frac{\XX_i+c_n}{h_n}\big]} 
 \ee\tr\,  \QQ^{-1}(\XX_i-\uu h_n)\,\bpsi(\uu)\,K(\uu)\,\frac{f_{\XX}(\XX_i-\uu h_n)}{\sigma_1(\XX_i-\uu h_n)}\, d\uu.
\end{eqnarray*}
From the support properties of the kernel function it follows that $\Delta_n(\XX_i)\ind\{\XX_i\not\in \JJ_{n}^{+}\}=0$. 
Further, for $\JJ_{n}^{-}=[-c_n+h_n,c_n-h_n]^d$ note that
\begin{equation} \label{eq: In minus Kn}
\frac{1}{n}\sum_{i=1}^n \eps_{1i}\,\Delta_n(\XX_i)\,\ind \{\XX_i\in \JJ_{n}^{+}\setminus\JJ_{n}^{-}\} 
= o_P\big(n^{-1/2}\big)
\end{equation}
%
%
because the expectation is zero and the variance is bounded by
\begin{multline*}
\frac{1}{n}\sup_{\xx\in \JJ_n}\frac{1}{\sigma_1(\xx)}\frac{1}{\lambda_n}
 \sup_{\xx\in\mathbb{R}^d}f_{\XX}(\xx) 
 \,\Es \big[\ind\big\{\XX_1\in \JJ_{n}^{+}\setminus \JJ_{n}^{-}\big\}\,\sigma_1^2(\XX_1)\big]  
\\ 
\leq \frac{1}{n}\, O\Big(\tfrac{1}{\alpha_n^{(1)}\alpha_n^{(2)}}\Big)
 M_{n} \,\PP\big(\XX_1\in \JJ_{n}^{+}\setminus \JJ_{n}^{-}\big)  
= \frac{1}{n}\,O\Big(\tfrac{M_n\,h_n}{\alpha_n^{(1)}\alpha_n^{(2)}}\Big)\;=\; o\big(n^{-1}\big).
\end{multline*}
It remains to consider
\begin{eqnarray*}
\frac{1}{n}\sum_{i=1}^n \eps_{1i}\Delta_n(\XX_i)\ind \{\XX_i\in \JJ_{n}^{-}\},
\end{eqnarray*}
with
$\Delta_n(\XX_i)= \Delta_n^{(1)}(\XX_i)+\Delta_n^{(2)}(\XX_i)$,
where
\begin{eqnarray*}
\Delta_n^{(1)}(\XX_i)
&=& \int_{[-1,1]^d} \ee \tr \,  \QQ^{-1}(\XX_i-\uu h_n)\,\bpsi(\uu)\,K(\uu)\,f_{\XX}(\XX_i-\uu h_n)\, d\uu,
\\
\Delta_n^{(2)}(\XX_i)
&=& \int_{[-1,1]^d} \ee \tr \,  \QQ^{-1}(\XX_i-\uu h_n)\,\bpsi(\uu)\,K(\uu)\,f_{\XX}(\XX_i-\uu h_n)\Big(\tfrac{\sigma_1(\XX_i)}{\sigma_1(\XX_i-\uu h_n)}-1\Big)\, d\uu.
\end{eqnarray*}
Now, by applying the mean value theorem for $\sigma_1$, for $\XX_i\in\JJ_{n}^{-}$, $\Delta_n^{(2)}(\XX_i)$ 
can be bounded absolutely by $O\big(M_n\,h_n/(\alpha_n^{(1)}\alpha_n^{(2)})\big)=o(1)$. 
Thus analogously as when showing~\eqref{eq: In minus Kn} one can use Markov's inequality to get 
\begin{equation*}
\int_{\JJ_n}\hata_1(\xx)f_{\XX}(\xx)\,d\xx - \frac{1}{n}\sum_{i=1}^n \eps_{1i} 
 = \frac{1}{n}\sum_{i=1}^n \eps_{1i} \big(\Delta_n^{(1)}(\XX_i)-1 \big) \ind \{\XX_i\in \JJ_{n}^{-}\} 
  + o_P\big(n^{-1/2}\big).
\end{equation*}
To obtain the desired negligibility it remains to show  
 $\E \big[\big(\Delta_n^{(1)}(\XX_i)-1\big)^2\ind \{\XX_i\in \JJ_{n}^{-}\} \big]\to 0$. 
To this end we write
$$
 1 = \int_{[-1,1]^d} \ee \tr \, \QQ_*^{-1}(\xx-h_n\uu)\bpsi(\uu)K(\uu)f_{\XX}(\xx-h_n\uu)\,d\uu,
$$
%
where the matrix $\QQ_*(\xx)$ has entries  
$$
 Q_{*ik}(\xx) = f_{\XX}(\xx)\int\psi_{\ii}(\uu)\psi_{\kk}(\uu)K(\uu)\,d\uu, \qquad \ii,\kk \in \setI. 
$$ 
Note that $\QQ_*(\xx)$ has the smallest eigenvalue of order $\lambda_n$. Thus we obtain the bound
\begin{align*}
\E&\big[\big(\Delta_n^{(1)}(\XX_i)-1\big)^2 \ind \{\XX_i\in \JJ_{n}^{-}\} \big]
\\
&=
 \E\bigg[\Big(\int_{[-1,1]^d} \ee \tr \,\big(\QQ^{-1}(\XX_i-\uu h_n) - \QQ_*^{-1}(\XX_i-\uu h_n)\big)\,
\\
& \qquad \qquad \qquad \qquad  \qquad \bpsi(\uu)K(\uu)f_{\XX}(\XX_i-\uu h_n)\, d\uu\Big)^2\,\ind \{\XX_i\in \JJ_{n}^{-}\}\bigg]
\\
&\leq O(1)\int_{\JJ_{n}^{-}}\int_{[-1,1]^d} \big\|\QQ^{-1}(\xx-\uu h_n) - \QQ_*^{-1}(\xx-\uu h_n)\big\|^2\, d\uu\,f_{\XX}(\xx)\,d\xx\\
&\leq O(1)\sup_{\xx\in\JJ_{n}}\|\QQ^{-1}(\xx)\|^2\sup_{\xx\in\JJ_{n}}\|\QQ_*^{-1}(\xx)\|^2 
\\
& \qquad \qquad \qquad \qquad \qquad \int_{\JJ_{n}^{-}}\int_{[-1,1]^d} \|\QQ(\xx-\uu h_n)-\QQ_*(\xx-\uu h_n)\|^2\, d\uu\,f_{\XX}(\xx)\,d\xx.
\end{align*}
Now with bounds for the matrix norms similar to before, and inserting the definitions of 
$\QQ$ and $\QQ_*$ we obtain
\begin{align*}
&\E\big[(\Delta_n^{(1)}(\XX_i)-1)^2\ind \{\XX_i\in \JJ_{n}^{-}\} \big]\\
&\leq O\Big(\tfrac{1}{(\alpha_n^{(1)})^4}\Big)\int_{\JJ_{n}^{-}}\int_{[-1,1]^d}\int_{[-1,1]^d} |f_{\XX}(\xx-\uu h_n+h_n\vv)-f_{\XX}(\xx-\uu h_n)|^2 K(\vv)\,d\vv\, d\uu\,f_{\XX}(\xx)\,d\xx\\
&= O\Big(\tfrac{h_n^2}{(\alpha_n^{(1)})^4}\Big)\;=\;o(1)
\end{align*}
by the mean value theorem and assumptions $\mathbf{(F_X)}$ and $\mathbf{(k)}$. 

\medskip

\noindent \underline{Proof of assertions (i)--(iv) for $\widehat\sigma_1$}.  
Recall the definition $\widehat\sigma_1^2=\widehat s_1-\hatm_1^2$, where $\widehat s_1$ is the local polynomial estimator based on $(\XX_i,Y_{1i}^2)$, $i=1,\dotsc,n$. With the notation  $s_1(\xx)=\E[Y_{1i}^2\mid \XX_i=\xx]=\sigma_1^2(\xx)+m_1^2(\xx)$ we obtain
\begin{eqnarray*}
\frac{\widehat\sigma_1(\xx)}{\sigma_1(\xx)} 
&=& 1+\frac{\widehat s_1(\xx)-s_1(\xx)}{2\sigma_1^2(\xx)}-\frac{\hatm_1(\xx)-m_1(\xx)}{\sigma_1(\xx)}\frac{m_1(\xx)}{\sigma_1(\xx)}+r(\xx),
\end{eqnarray*}
where 
\begin{eqnarray*}
r(\xx)&=&-\frac{1}{2}\frac{(\hatm_1(\xx)-m_1(\xx))^2}{\sigma_1^2(\xx)}-\frac{(\widehat\sigma_1^2(\xx)-\sigma_1^2(\xx))^2}{2\sigma_1^{2}(\xx)(\widehat\sigma_1(\xx)+\sigma_1(\xx))^2}.
\end{eqnarray*}
Put
$$
 \widehat{c}_1(\xx) = \ee \tr \, \QQ^{-1}(\xx)\widetilde{\AA}(\xx)\frac{1}{2\,\sigma_1^{2}(\xx)},
\quad \xx\in \JJ_n,
$$
where $\widetilde{\AA}(\xx)$ denotes the vector with components 
$$
 \widetilde{A}_{\ii}(\xx) = \frac{1}{n}\sum_{\ell=1}^{n}\big[2\,m_{1}(\XX_{\ell})\sigma_{1}(\XX_{\ell})\,\eps_{1\ell}+ \sigma_1^{2}(\XX_\ell)\,
\big(\eps_{1\ell}^{2}-1\big)\big]\psi_{\ii,\hh_n}(\XX_\ell-\xx)\,K_{\hh_n}(\XX_\ell-\xx), 
 \quad \ii \in \setI.
$$
Along the lines of the proof of (i) and (ii) for $\hatm_1$ 
one can prove that 
%

%
$$
 \sup_{\xx\in\JJ_n}\bigg|\frac{\widehat s_1(\xx)-s_1(\xx)}{2\sigma_1^2(\xx)}-\widehat c_1(\xx)\bigg| 
=O_P\Big(\tfrac{\varrho_n^2}{(\alpha_n^{(1)})^{|\setI|}(\alpha_n^{(2)})^2}\Big)+O_P\Big(\tfrac{M_nh_n^{p+1}}{ \alpha_n^{(1)}(\alpha_n^{(2)})^2}\Big)=o_P\big(n^{-1/2}\big)
$$
%
and
$$
 \sup_{\xx\in\JJ_n}|\widehat c_1(\xx)|=O_P\Big(\tfrac{\varrho_n}{\alpha_n^{(1)}(\alpha_n^{(2)})^2}\Big)=o_P\big(n^{-1/4}\big). 
$$
Now noticing that
$\widehat\sigma_1^2(\xx)-\sigma_1^2(\xx)=\widehat s_1(\xx)-s_1(\xx)-(\hatm_1(\xx)-m_1(\xx))(\hatm_1(\xx)+m_1(\xx))$
we obtain the rate
$$ 
 \sup_{\xx\in\JJ_n}|r(\xx)|=o_P\big(n^{-1/2}\big)+O_P\Big(\tfrac{\varrho_n^2}{(\alpha_n^{(1)})^2(\alpha_n^{(2)})^4}\Big)=o_P\big(n^{-1/2}\big)
$$
and (i) follows for $\widehat\sigma_1$. 

If we define $\hatb_1(\xx)=1+\widehat c_1(\xx)-\hata_1(\xx)m_1(\xx)/\sigma_1(\xx)$, then (ii) and (iii) follow analogously to before. The only difference is an additional factor $\sigma_1(\xx)$ in the denominator that needs to be considered. 

To show validity of (iv) note that the regression model
$Y_{1i}^2=s_1(\XX_i)+\eta_i$ holds with error term $\eta_i=\sigma_i^2(\XX_i)(\varepsilon_{1i}^2-1)+2m_1(\XX_i)\sigma_1(\XX_i)\varepsilon_{1i}$. From this one obtains analogously to the derivation of (iv) for $\hata_1$ that
\begin{eqnarray*}
\int_{\JJ_n}\frac{\widehat{s}_1(\xx)}{2\sigma_1^2(\xx)}f_{\XX}(\xx)\,d\xx
&=& \frac{1}{2n}\sum_{i=1}^n (\varepsilon_{1i}^2-1)+\frac{1}{n}\sum_{i=1}^n \frac{m_1(\XX_i)}{\sigma_1(\XX_i)}\varepsilon_{1i}+o_P\big(n^{-1/2}\big).
\end{eqnarray*}
But the second sum is also the dominating term in $\int_{\JJ_n}\hata_1(\xx)m_1(\xx)/\sigma_1(\xx)f_{\XX}(\xx)\,d\xx$, which is again shown analogously to the proof of (iv) for $\hata_1$. Thus (iv) follows for $\hatb_1$. 
\end{proof}

\medskip

\begin{remark} \label{rem: extending a and b}
Note that due to property (iii) of Lemma \ref{lemma hatm and hatsigma} and 
\eqref{eq: alpha one} 
we have for $\xx\in\JJ_n=[-c_n,c_n]^d$,
$$\|\xx\|^\nu|\,\hata_1(\xx)|\leq O((\log n)^{\nu/d})o(n^{-1/4})=o(1)$$
for every $\nu>0$.
In the proof of Lemma  \ref{lemma hatm and hatsigma}, $\hata_1(\xx)$ was only defined for $\xx\in\JJ_n$. Now we define $\hata_1$ on $\mathbb{R}^d$ in a way that  if $\hata_1\in C_1^{d+\delta}(\JJ_n)$ and $\|\xx\|^\nu|\hata_1(\xx)|\leq 1$, then $\hata_1\in\mathcal{G}$ defined in (\ref{G}).
Then $\PP\big(\hata_1\in\mathcal{G}\big)\to 1$ by  Lemma \ref{lemma hatm and hatsigma}.  Analogously $\hatb_1$ is defined on $\mathbb{R}^d$ such that $\PP\big(\hatb_1\in\widetilde{\mathcal{G}} \big)\to 1$ for $\widetilde{\mathcal{G}}$ from (\ref{tilde-G}).

\end{remark}

\medskip

\begin{lemma}\label{lem-covering-functionclass}
Let $\mathcal{H}=\mathcal{G}$ or $\widetilde{\mathcal{G}}$ denote one of the function classes defined in 
(\ref{G}) and (\ref{tilde-G}) (depending on $\nu>0$ and $\delta\in (0,1]$), then we have 
%
$$
 \log N(\epsilon,\mathcal{H},\|\cdot\|_\infty)=O\Big(\epsilon^{-\big(\frac{d}{\nu}+\frac{d}{d+\delta}\big)}\Big)
$$
for $\epsilon\searrow  0$, 
and thus the same bound holds for $\log N_{[\,]}\left(\epsilon,\mathcal{H},\|\cdot\|_2\right)$.
\end{lemma}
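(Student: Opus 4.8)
The plan is to reduce the covering problem on the unbounded domain $\RR^d$ to one on a large bounded ball, using the polynomial tail constraints in \eqref{G} and \eqref{tilde-G} to control the functions away from the origin. I would treat $\mathcal{H}=\mathcal{G}$ in detail; the class $\widetilde{\mathcal{G}}$ is then handled verbatim after replacing the role of the zero function by the constant function $1$ and the H\"older radius $1$ by $2$, which affects only the constants and not the rate.

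First I would fix $\epsilon\in(0,1)$ and set $R=(2/\epsilon)^{1/\nu}$. For any $a\in\mathcal{G}$ and any $\xx$ with $\|\xx\|>R$ the tail bound gives $|a(\xx)|\le\|\xx\|^{-\nu}<R^{-\nu}=\epsilon/2$, so on the complement of the ball $B_R=\{\xx:\|\xx\|\le R\}$ the function already lies within $\epsilon/2$ of the zero function in supremum norm. On $B_R$ the restriction of $a$ belongs to the H\"older ball $C_1^{d+\delta}(B_R)$ of radius $1$. Hence it suffices to cover this restricted class to accuracy $\epsilon/2$ and extend each net element by $0$ outside $B_R$: for every $a\in\mathcal{G}$ the resulting function $g$ satisfies $|a-g|\le\epsilon/2$ on $B_R$ and $|a-g|=|a|<\epsilon/2$ off $B_R$, so these extensions form an $\epsilon$-net of $\mathcal{G}$. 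Consequently
\[
 N(\epsilon,\mathcal{G},\|\cdot\|_\infty)\le N\!\big(\epsilon/2,C_1^{d+\delta}(B_R),\|\cdot\|_\infty\big).
\]

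Next I would estimate this entropy. Covering $[-R,R]^d\supset B_R$ by $O(R^d)$ unit cubes and invoking the classical entropy bound for H\"older classes on a fixed bounded convex set \citep[e.g.][Theorem~2.7.1]{vaart_wellner}, which gives $\log N(\epsilon/2,C_1^{d+\delta}(\text{unit cube}),\|\cdot\|_\infty)=O(\epsilon^{-d/(d+\delta)})$ with a constant depending only on $d$ and $\delta$, a product net over the cubes yields $\log N(\epsilon/2,C_1^{d+\delta}(B_R),\|\cdot\|_\infty)=O(R^d\,\epsilon^{-d/(d+\delta)})$. Substituting $R^d=(2/\epsilon)^{d/\nu}=O(\epsilon^{-d/\nu})$ gives
\[
 \log N(\epsilon,\mathcal{G},\|\cdot\|_\infty)=O\!\big(\epsilon^{-d/\nu}\,\epsilon^{-d/(d+\delta)}\big)=O\!\big(\epsilon^{-(d/\nu+d/(d+\delta))}\big),
\]
which is the asserted rate. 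The bracketing statement then follows from the elementary inequality $N_{[\,]}(\epsilon,\mathcal{H},\|\cdot\|_2)\le N(\epsilon/2,\mathcal{H},\|\cdot\|_\infty)$: an $\epsilon/2$-net $\{g_i\}$ in supremum norm produces brackets $[g_i-\epsilon/2,\,g_i+\epsilon/2]$ of $\|\cdot\|_2$-length at most $\epsilon$ (the underlying measure being a probability measure), and this preserves the rate.

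The step needing the most care is the entropy estimate on $B_R$: since the constant in the bounded-domain H\"older entropy bound grows with the diameter of the domain, I would not apply it to $B_R$ directly but partition into unit cubes, where the per-cube constant is scale-invariant, and then multiply. The trade-off between the tail exponent $\nu$ (governing how fast $R$, hence the number of cubes, grows) and the smoothness $d+\delta$ (governing the per-cube entropy) is exactly what produces the additive exponent $d/\nu+d/(d+\delta)$. After the passage to $\|\cdot\|_{2,\beta}$-lengths in the main proof this exponent is multiplied by $b/(b-1)$, so that condition \eqref{nu} is precisely the requirement keeping the bracketing integral for $\mathcal{F}$ finite.
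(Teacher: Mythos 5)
Your proof is correct and follows essentially the same route as the paper's: truncate to a ball of radius of order $\epsilon^{-1/\nu}$ (the tail condition $\sup_{\xx}\|\xx\|^{\nu}|a(\xx)|\le 1$ makes the zero extension an adequate net outside), apply the H\"older-class entropy bound of van der Vaart and Wellner inside, and combine the volume factor $O(\epsilon^{-d/\nu})$ with the per-unit-volume entropy $O(\epsilon^{-d/(d+\delta)})$ to get the additive exponent. Your two small deviations --- partitioning the large ball into unit cubes so that the constant in Theorem~2.7.1 (which depends on the diameter of the domain) stays uniform, and deducing the bracketing bound from the elementary inequality $N_{[\,]}(\epsilon,\mathcal{H},\|\cdot\|_2)\le N(\epsilon/2,\mathcal{H},\|\cdot\|_\infty)$ rather than citing the proof of Corollary~2.7.2 --- are sound and, if anything, tighten the paper's own argument without changing its substance.
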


\begin{proof}
Let $\mathcal{H}=\mathcal{G}$ (the proof is similar for $\widetilde{\mathcal{G}}$) and let $\epsilon>0$. Choose $D=D(\epsilon)=\epsilon^{-1/\nu}$. 
Let $B$ denote the ball of radius $D$ around the origin. Let $a_{1},\dotsc,a_m:B\to\RR$ denote the centers of $\epsilon$-balls with respect to the supremum norm that cover $C_1^{d+\delta}(B)$, that is 
$ m= N(\epsilon,C_1^{d+\delta}(B),\|\cdot\|_\infty)$. 
 Then for each $a\in\mathcal{G}$ we have $a|_B\in C_1^{d+\delta}(B)$ and thus there exists $j_0\in\{1,\dotsc,m\}$ such that $\sup_{\xx\in B}|a(\xx)-a_{j_0}(\xx)|\leq \epsilon$. 
Now define $a_j(\xx)=0$ for $\xx\in\RR^d\setminus B$, $j=1,\dotsc,m$. Then 
$$
 \sup_{\xx\in\RR^d}|a(\xx)-a_{j_0}(\xx)|\leq \max\Big\{\epsilon, \sup_{\|\xx\|\geq D}|a(\xx)|\Big\}\leq \epsilon
$$
because $\|\xx\|^\nu|a(\xx)|\leq 1$ by definition of $\mathcal {G}$. We obtain $N(\epsilon,\mathcal{G},\|\cdot\|_\infty)\leq m$
and due to \cite{vaart_wellner}, Theorem 2.7.1,  we have for some universal $K$
$$
\log m\leq K\,\lambda^d\big(B^1\big)\,\epsilon^{-\frac{d}{d+\delta}}
=  O\big((D+2)^d\big)\,\epsilon^{-\frac{d}{d+\delta}}
= O\Big(\epsilon^{-\big(\frac{d}{\nu}+\frac{d}{d+\delta}\big)}\Big), 
$$
where $B^1 = \big\{\xx:\, \|\xx-B\| < 1 \big\}$

 Thus the first assertion follows. The second assertion follows by \cite{vaart_wellner}, proof of Cor.\ 2.7.2. 
\end{proof}

\bibliographystyle{apalike}\bibliography{short,ReferencesU}

\newcommand{\noop}[1]{}
\begin{thebibliography}{}

\bibitem[Berghaus et~al., 2017]{berghaus2017weak}
Berghaus, B., B{\"u}cher, A., and Volgushev, S. (2017).
\newblock Weak convergence of the empirical copula process with respect to
  weighted metrics.
\newblock {\em Bernoulli}, 23(1):743--772.

\bibitem[Bickel and Wichura, 1971]{bickel1971convergence}
Bickel, P.~J. and Wichura, M.~J. (1971).
\newblock Convergence criteria for multiparameter stochastic processes and some
  applications.
\newblock {\em Ann. Math. Statist.}, 42:1656--1670.

\bibitem[Brahimi and Necir, 2012]{brahimi2012semiparametric}
Brahimi, B. and Necir, A. (2012).
\newblock A semiparametric estimation of copula models based on the method of
  moments.
\newblock {\em Stat. Methodol.}, 9(4):467--477.

\bibitem[B{\"u}cher and Volgushev, 2013]{bucher2013empirical}
B{\"u}cher, A. and Volgushev, S. (2013).
\newblock Empirical and sequential empirical copula processes under serial
  dependence.
\newblock {\em J. Multivariate Anal.}, 119:61--70.

\bibitem[Chan et~al., 2009]{chan2009statistical}
Chan, N.-H., Chen, J., Chen, X., Fan, Y., and Peng, L. (2009).
\newblock Statistical inference for multivariate residual copula of {GARCH}
  models.
\newblock {\em Statist. Sinica}, 19:53--70.

\bibitem[Chen and Fan, 2006]{chen2006estimation}
Chen, X. and Fan, Y. (2006).
\newblock Estimation and model selection of semiparametric copula-based
  multivariate dynamic models under copula misspecification.
\newblock {\em J. Econometrics}, 135:125--154.

\bibitem[Dedecker and Louhichi, 2002]{dedecker_louhichi}
Dedecker, J. and Louhichi, S. (2002).
\newblock Maximal inequalities and empirical central limit theorems.
\newblock In Dehling, H., Mikosch, T., and Sorensen, M., editors, {\em
  Empirical process techniques for dependent data}, pages 137--160.
  Birkh\"auser Boston.

\bibitem[Dette et~al., 2009]{dette2009goodness}
Dette, H., Pardo-Fern{\'a}ndez, J.~C., and Keilegom, I.~V. (2009).
\newblock Goodness-of-fit tests for multiplicative models with dependent data.
\newblock {\em Scand. J.~Statist.}, 36(4):782--799.

\bibitem[Fan and Gijbels, 1996]{fan_gijbels_1996}
Fan, J. and Gijbels, I. (1996).
\newblock {\em Local Polynomial Modelling and Its Applications}.
\newblock Chapman \& Hall/CRC, London.

\bibitem[Fan and Yao, 2005]{fan_yao_book2005}
Fan, J. and Yao, Q. (2005).
\newblock {\em Nonlinear Time Series : Nonparametric and Parametric Methods}.
\newblock Springer series in statistics. Springer, New York.

\bibitem[Gao, 2007]{gao2007nonlinear}
Gao, J. (2007).
\newblock {\em Nonlinear Time Series: Semiparametric and Nonparametric
  Methods}.
\newblock Chapman \& Hall/CRC, Boca Raton.

\bibitem[Genest et~al., 1995]{genest_et_al_1995}
Genest, C., Ghoudi, K., and Rivest, L.-P. (1995).
\newblock A semiparametric estimation procedure of dependence parameters in
  multivariate families of distributions.
\newblock {\em Biometrika}, 82:543--552.

\bibitem[Genest et~al., 2009]{genest-et-al-gof-2008}
Genest, C., R{\'e}millard, B., and Beaudoin, D. (2009).
\newblock Goodness-of-fit tests for copulas: A review and a power study.
\newblock {\em Insurance Math. Econom.}, 44:199--213.

\bibitem[Gijbels et~al., 2017]{ogvp_testing_rao_2017}
Gijbels, I., Omelka, M., Pe\v{s}ta, M., and Veraverbeke, N. (2017).
\newblock Score tests for covariate effects in conditional copulas.
\newblock {\em J.~Multivariate Anal.}, 159:111--133.

\bibitem[Gijbels et~al., 2015]{ogv_sjs_2015}
Gijbels, I., Omelka, M., and Veraverbeke, N. (2015).
\newblock Estimation of a copula when a covariate affects only marginal
  distributions.
\newblock {\em Scand. J. Statist.}, 42:1109--1126.

\bibitem[Hansen, 2008]{hansen2008}
Hansen, B.~E. (2008).
\newblock Uniform convergence rates for kernel estimation with dependent data.
\newblock {\em Econometric Theory}, 24:726--748.

\bibitem[H{\"a}rdle et~al., 1998]{hardle1998nonparametric}
H{\"a}rdle, W., Tsybakov, A., and Yang, L. (1998).
\newblock Nonparametric vector autoregression.
\newblock {\em J.~Statist. Plann. Inference}, 68(2):221--245.

\bibitem[Kim et~al., 2007]{kim2007semiparametric}
Kim, G., Silvapulle, M.~J., and Silvapulle, P. (2007).
\newblock Semiparametric estimation of the error distribution in multivariate
  regression using copulas.
\newblock {\em Aust. N. Z. J. Stat.}, 49(3):321--336.

\bibitem[Kim et~al., 2008]{kim2008estimating}
Kim, G., Silvapulle, M.~J., and Silvapulle, P. (2008).
\newblock Estimating the error distribution in multivariate heteroscedastic
  time-series models.
\newblock {\em J.~Statist. Plann. Inference}, 138(5):1442--1458.

\bibitem[Kojadinovic and Holmes, 2009]{kojadinovic2009tests}
Kojadinovic, I. and Holmes, M. (2009).
\newblock Tests of independence among continuous random vectors based on
  {C}ram{\'e}r--von {M}ises functionals of the empirical copula process.
\newblock {\em J.~Multivariate Anal.}, 100(6):1137--1154.

\bibitem[Koul and Zhu, 2015]{koul2015goodness}
Koul, H.~L. and Zhu, X. (2015).
\newblock Goodness-of-fit testing of error distribution in nonparametric
  {ARCH(1)} models.
\newblock {\em J.~Multivariate Anal.}, 137:141--160.

\bibitem[Masry, 1996]{masry1996multivariate}
Masry, E. (1996).
\newblock Multivariate local polynomial regression for time series: uniform
  strong consistency and rates.
\newblock {\em J. Time Series Anal.}, 17(6):571--599.

\bibitem[McNeil et~al., 2005]{embrechts2005quantitative}
McNeil, A.~J., Frey, R., and Embrechts, P. (2005).
\newblock Quantitative risk management: Concepts, techniques, and tools.

\bibitem[M{\"u}ller et~al., 2009]{MSW2009}
M{\"u}ller, U.~U., Schick, A., and Wefelmeyer, W. (2009).
\newblock Estimating the error distribution function in nonparametric
  regression.
\newblock {\em Statist. Probab. Lett.}, 79:957--964.

\bibitem[Nelsen, 2006]{nelsen_2006}
Nelsen, R.~B. (2006).
\newblock {\em An Introduction to Copulas}.
\newblock Springer, New York.
\newblock Second Edition.

\bibitem[Patton, 2012]{patton2012review}
Patton, A.~J. (2012).
\newblock A review of copula models for economic time series.
\newblock {\em J.~Multivariate Anal.}, 110:4--18.

\bibitem[Portier and Segers, 2018]{portier2018weak}
Portier, F. and Segers, J. (2018).
\newblock On the weak convergence of the empirical conditional copula under a
  simplifying assumption.
\newblock {\em J.~Multivariate Anal.}, 166:160--181.

\bibitem[R{\'e}millard et~al., 2012]{remillard2012copula}
R{\'e}millard, B., Papageorgiou, N., and Soustra, F. (2012).
\newblock Copula-based semiparametric models for multivariate time series.
\newblock {\em J.~Multivariate Anal.}, 110:30--42.

\bibitem[Segers, 2012]{Segers:2012}
Segers, J. (2012).
\newblock Weak convergence of empirical copula processes under nonrestrictive
  smoothness assumptions.
\newblock {\em Bernoulli}, 18:764--782.

\bibitem[Tsukahara, 2005]{tsukahara_2005}
Tsukahara, H. (2005).
\newblock Semiparametric estimation in copula models.
\newblock {\em Canad. J. Statist.}, 33:357--375.

\bibitem[{van der Vaart} and Wellner, 1996]{vaart_wellner}
{van der Vaart}, A.~W. and Wellner, J.~A. (1996).
\newblock {\em Weak Convergence and Empirical Processes}.
\newblock Springer, New York.

\bibitem[{van der Vaart} and Wellner, 2007]{vaart_wellner_2007}
{van der Vaart}, A.~W. and Wellner, J.~A. (2007).
\newblock Empirical processes indexed by estimated functions.
\newblock In Cator, E.~A., Jongbloed, G., Kraaikamp, C., Lopuha{\"a}, H.~P.,
  and Wellner, J.~A., editors, {\em IMS Lecture Notes Monograph Series 2007:
  Asymptotics: Particles, Processes and Inverse Problems}, volume~55, pages
  234--252. Institute of Mathematical Statistics.

\bibitem[Veraverbeke et~al., 2011]{ogv_sjs_2011}
Veraverbeke, N., Omelka, M., and Gijbels, I. (2011).
\newblock Estimation of a conditional copula and association measures.
\newblock {\em Scand. J.~Statist.}, 38:766--780.

\bibitem[Yang et~al., 1999]{yang1999nonparametric}
Yang, L., H{\"a}rdle, W., and Nielsen, J. (1999).
\newblock Nonparametric autoregression with multiplicative volatility and
  additive mean.
\newblock {\em J. Time Series Anal.}, 20(5):579--604.

\end{thebibliography}

\end{document}